\theoremstyle{plain}
\newtheorem{theorem}{Theorem}[section]
\newtheorem{lemma}[theorem]{Lemma}
\newtheorem{definition}[theorem]{Definition}
\newtheorem{proposition}[theorem]{Proposition}
\newtheorem{remark}[theorem]{Remark}
\numberwithin{equation}{section}
\newcommand{\ra}{\longrightarrow}
\newcommand{\dholoa}[1]{\mathcal{#1}}
\newcommand{\dholo}[1]{[\mathcal{U},{#1}_{\mathcal{U}}]}
\newcommand{\dholoc}[2]{{#1}_{#2}}
\newcommand{\twistC}{\mathcal{C}}
\newcommand{\cocyclecotangent}[3]{g_{#1#2}^{T_d^*{#3}}}
\newcommand{\cocyclebundle}[3]{g_{#1#2}^{#3}}
\newcommand{\overlinecocyclebundle}[3]{\overline{g}_{#1#2}^{#3}}
\newcommand{\cocycletangent}[3]{g_{#1#2}^{T_d{#3}}}
\newcommand{\xra}[1]{\xlongrightarrow{{#1}}}
\newcommand{\C}{\mathbb{C}}
\newcommand{\R}{\mathbb{R}}
\newcommand{\dholos}[1]{\mathfrak{#1}}
\newcommand{\bigslant}[2]{{\raisebox{.2em}{$#1$}\left/\raisebox{-.2em}{$#2$}\right.}}
\newcommand\restr[2]{{
  \left.\kern-\nulldelimiterspace 
  #1 
  \vphantom{|} 
  \right|_{#2} 
  }}
\newcommand{\dhstruct}[1]{\mathcal{O}_{#1}^{dh}}
\newcommand{\dhform}[1]{\Omega^1_{#1}}
\newcommand{\m}[1]{\mathfrak{m}_{#1}}
\newcommand{\id}[1]{\ensuremath{\mathbf{1}_{#1}}}
\begin{document}
\baselineskip=15.5pt

\title{On $d$-holomorphic connections}
\author{Sanjay~Amrutiya}
\address{Department of Mathematics, IIT Gandhinagar,
 Near Village Palaj, Gandhinagar - 382355, India}
 \email{samrutiya@iitgn.ac.in}
\author{Ayush~Jaiswal}
\address{Department of Mathematics, IIT Gandhinagar,
 Near Village Palaj, Gandhinagar - 382355, India}
\email{ayush.jaiswal@iitgn.ac.in}
\subjclass[2000]{Primary: 53C07, Secondary: 32L05, 53C05}
\keywords{Klein surfaces; d-holomorphic bundles, d-holomorphic connections}
\thanks{This work is a part of the second author's Ph. D. thesis.}
\date{}

\begin{abstract}
\noindent We develop the theory of $d$-holomorphic connections on $d$-holomorphic 
vector bundles over a Klein surface by constructing the analogous Atiyah exact 
sequence for $d$-holomorphic bundles. We also give a criterion for the existence 
of $d$-holomorphic connection in $d$-holomorphic bundle over a Klein surface in 
the spirit of the Atiyah-Weil criterion for holomorphic connections.
\end{abstract}
\maketitle

\section{Introduction}\label{intro}
The theory of Klein surfaces is elaborated by Alling and Greenleaf in \cite{NLANG}. 
For $d$-holomorphic bundles over Klein surfaces, the Narasimhan-Seshadri-Donaldson 
correspondence is established in \cite{W}. In \cite{W1}, Wang has developed the 
twisted complex geometry, which generalizes the concept of $d$-holomorphic bundles
to higher dimension twisted complex manifolds. In \cite{IBMD}, Biswas and Dutta
have investigated various structures on non-orientable manifolds using the
orientation line bundle.

The purpose of this article is to develop the theory of $d$-holomorphic connections on $d$-holomorphic
bundle over a Klein surface following \cite{MFA, IBNR} by working within the framework of 
$d$-holomorphic structure.
We also give a criterion for the existence of $d$-holomorphic connection on $d$-holomorphic bundle
over a Klein surface. In Section \ref{sec-prelim}, we review some basic concepts on
Klein surfaces and $d$-holomorphic bundles. We discuss differential forms, the Dolbeault operator,
and study $d$-smooth connections and curvatures on $d$-complex vector bundles.
Using the orientation bundle, we have a generalized Hodge star operator, which plays an important role to 
get non-degenerate pairing (see \S \ref{subsec-criterion}). 
In Section \ref{sec-doperator}, we study the first order 
differential $d$-operator between $d$-holomorhic bundles and construct the symbol exact sequence. 
Using it, we give an analogous Atiyah exact sequence for a $d$-holomorphic bundle and define 
$d$-holomorphic connections as a splitting of the Atiyah exact sequence following \cite{MFA, IBNR}.
In Section \ref{sec-Jet}, we study Jet bundles and relate them with $d$-holomorphic connections. 
In Section \ref{sec-criterion}, we give a corresponding Atiyah-Weil criterion for the existence of 
$d$-holomorphic connection in a $d$-holomorphic bundle over a compact Klein surface.

\section{Klein surfaces}\label{sec-prelim}
A topological 2-manifold $X$ (with or without boundary) along with an atlas $\dholoa{U}=\{U_i,z_i\}_{i\in I}$ 
such that the transition maps $z_j\circ z_i^{-1}$ is either holomorphic or anti-holomorphic at each connected 
component of $U_i\cap U_j$, is called Klein surface or $d$-complex manifold of dimension 1 and the atlas is called the 
$d$-holomorphic atlas.

Two $d$-holomorphic atlases $\dholoa{U}_1=\{U_i,z_i\}_{i\in I}$ and $\dholoa{U}_2=\{U_j,z_j\}_{j\in J}$ are said to be 
equivalent if $\dholoa{U}_1\cup \dholoa{U}_2$ is again $d$-holomorphic atlas means $z_i\circ z_j^{-1}$ is either holomorphic 
or anti-holomorphic on each connected component of $U_i\cap U_j$ for some $i\in I$ and some $j\in J$. We will denote the 
maximal $d$-holomorphic atlas by $\dholos{X}$, which is called $d$-holomorphic structure on topological manifold $X$.

For a $d$-holomorphic atlas $\dholoa{U}=\{U_i,z_i\}_{i\in I}$ on $X$ and any open subset $U\subset X$, we have induced 
$d$-holomorphic atlas $\dholoa{U}\cap U=\{U_i\cap U,z_i|_{U_i\cap U}\}$ on $U$.

\begin{remark}\rm{
From now on w.l.g, we will assume that for a given $d$-holomorphic atlas $\dholoa{U}=\{(U_i,z_i)\}_{i\in I}$ each 
intersection $U_i\cap U_j$ ($i,j\in I$) is connected.
}
\end{remark}

For a given open subset $U$ of Klein surface with $d$-holomorphic atlas $\dholoa{U}=\{(U_i,z_i)\}_{i\in I}$, let  
$(\dholoa{U}\cap U,f_{\dholoa{U}\cap U})$ is a family of holomorphic functions 
$(\dholoc{f}{U_i\cap U}:U_i\cap U\ra \C)_{i\in I}$ with following compatibility condition
\begin{equation}\label{dholocompatibility}
  \dholoc{f}{U_j\cap U} = \left\{\begin{array}{lr}
        \dholoc{f}{U_i\cap U},  \hspace{10pt}\text{if } z_j\circ {z_i}^{-1} \text{ is holomorphic}\\
        \\
        \dholoc{\overline{f}}{U_i\cap U},  \hspace{10pt}\text{if } z_j\circ {z_i}^{-1} \text{ is anti-holomorphic}
        \end{array}\right.
\end{equation}
Note that the family depends on atlas $\dholoa{U}$; to make it independent, we will define an equivalence relation 
as follows.

Two families $(\dholoa{U}_1\cap U,f_{\dholoa{U}_1\cap U})$ and $(\dholoa{U}_2\cap U,f_{\dholoa{U}_2\cap U})$ are 
equivalent whenever atlases $\dholoa{U}_1$ and $\dholoa{U}_2$ are equivalent. Hence, we have an equivalence class 
containing the family $(\dholoa{U}_1\cap U,f_{\dholoa{U}\cap U})$, which is called the $d$-holomorphic function 
with respect to $d$-holomorphic structure induced by $\dholoa{U}_1$ on $X$ with domain $U$. This way, 
we get a sheaf of $d$-holomorphic functions on $X$, which we will denote by $\dhstruct{X}$. 
A Klein surface $(X,\dholos{X})$ can be viewed as an $\R$-ringed space with structure sheaf $\dhstruct{X}$.

\begin{remark}\rm{
 For a given $d$-holomorphic atlas $\dholoa{U}=\{U_i,z_i\}$, we will write a global $d$-holomorphic function as 
 $ f=\dholo{f}=\{\dholoc{f}{U_i}\}_{i\in I}$ and a $d$-holomorphic function with domain, some open subset $U\subset X$ 
 will be written as $f=[\dholoa{U}\cap U, f_{\dholoa{U}\cap U}]=\{f_{U_i\cap U}\}_{i \in I}$. 
 }
\end{remark}
\begin{remark}\rm{
For a given global $d$-holomorphic function $f$, it will be convenient to write $f|_{U_i}=\dholoc{f}{U_i}$, which is 
a holomorphic function or, $\dholoc{f}{U_i}\circ z_i^{-1}$ is holomorphic on $z_i(U_i)$. If $f\in \dhstruct{X}(U)$ for 
some open subset $U\subset X$, then $\{(U_i\cap U,z_i|_{U_i\cap U})\}_{i\in I}$ will be induced $d$-holomorphic 
atlas on $U$, and we will write $f|_{U_i\cap U}=\dholoc{f}{U_i\cap U}$.
}
\end{remark}

For a given Klein surface, one can construct its double cover $(\tilde{X}, \sigma)$, where $\tilde{X}$ is a Riemann 
surface with an anti-holomorphic involution $\sigma$, such the quotient space $\tilde{X}/\sigma$ with induced dianalytic 
structure is isomorphic to $X$ (see \cite{NLANG}).

From now on, if $a$ is some matrix, then its $(i,j)$th entry will be denoted by $(a)^i_j$ and $a=((a)^i_j)$.

\begin{definition}\cite{W}\label{s1d1}\rm{
A $d$-holomorphic vector bundle $E$ of rank $r$ on a Klein surface $(X,\dholos{X})$ is defined by its
transition maps $\cocyclebundle{i}{j}{E}:U_i\cap U_j\ra GL_n(r,\C)$ such that $\cocyclebundle{i}{j}{E}\circ z_i^{-1}$ is 
either holomorphic or antiholomorphic according to $z_j\circ z_i^{-1}$ is holomorphic or antiholomorphic, and satisfying 
the following co-cycle conditions
\[
 \cocyclebundle{k}{i}{E} = \left\{\begin{array}{ll}
 \cocyclebundle{k}{j}{E}\circ \cocyclebundle{j}{i}{E},  ~\hspace{10pt}\text{if } z_k\circ {z_j}^{-1} \text{ is holomorphic}\\
  \\
 \cocyclebundle{k}{j}{E}\circ \overlinecocyclebundle{j}{i}{E},  ~\hspace{10pt}\text{if } z_k\circ {z_j}^{-1} \text{ is anti-holomorphic}
 \end{array}\right.
 \]
along with
\[
  \cocyclebundle{i}{j}{E} = \left\{\begin{array}{lr}
        (\cocyclebundle{j}{i}{E})^{-1},  \hspace{10pt}\text{if } z_j\circ {z_i}^{-1} \text{ is holomorphic}\\
        \\
        (\overlinecocyclebundle{j}{i}{E})^{-1},  \hspace{10pt}\text{if } z_j\circ {z_i}^{-1} \text{ is anti-holomorphic}
        \end{array}\right.
  \]
  
The total space of $E$ is given by the quotient 
$\bigslant{(\displaystyle\coprod_{\{(U_i,z_i)\}_{i\in I}} U_i\times \C^r)}{\sim}$, where the equivalence relation $\sim$ 
is defined as follows:

Let $(x_i,\xi_i)\in U_i\times \C^r$ and $(x_j,\xi_j)\in U_j\times \C^r$ then $(x_i,\xi_i)\sim (x_j,\xi_j)$ if, 
\begin{equation}\label{section2equation1}
    x_i=x_j \text{ and }\xi_j = \left\{\begin{array}{lr}
       \cocyclebundle{j}{i}{E}(x_i)\xi_i,  \hspace{10pt}\text{if } z_j\circ {z_i}^{-1} \text{ is holomorphic}\\
        \\
        \cocyclebundle{j}{i}{E}(x_i)\overline{\xi}_i,  \hspace{10pt}\text{if } z_j\circ {z_i}^{-1} \text{ is anti-holomorphic}
        \end{array}\right. 
\end{equation}
}
\end{definition}  
A smooth $d$-complex bundle is defined by the transition maps $\cocyclebundle{i}{j}{E}$ which are 
required to be smooth with the above co-cycle conditions.  

For a given Klein surface $(X,\dholos{X})$ with dianalytic atlas $\dholoa{U}=\{(U_i,z_i)\}_{i\in I}$ and any open subset 
$U\subset X$, a section $s\in \Gamma(U,E)$ can be expressed as $[\dholoa{U}\cap U,s_{\dholoa{U}\cap U}]$, 
where $(\dholoa{U}\cap U,s_{\dholoa{U}\cap U})$ is a family $\{\dholoc{s}{U_i\cap U}:U_i\cap U\ra E\}_{i\in I}$ of 
holomorphic sections with compatibility conditions induced from gluing \eqref{section2equation1} as follows
\begin{equation}\label{compatibilitybundlesection}
  \dholoc{s}{U_j\cap U} = \left\{\begin{array}{lr}
        \dholoc{s}{U_i\cap U},  \hspace{10pt}\text{if } z_j\circ {z_i}^{-1} \text{ is holomorphic}\\
        \\
        \dholoc{\overline{s}}{U_i\cap U},  \hspace{10pt}\text{if } z_j\circ {z_i}^{-1} \text{ is anti-holomorphic}
        \end{array}\right.
\end{equation}
If $[\dholoc{s}{U_j\cap U}]$ (respectively, $[\dholoc{s}{U_i\cap U}]$) represents co-ordinate matrix of the section 
$\dholoc{s}{U_j\cap U}$ (respectively, $\dholoc{s}{U_i\cap U}$) with respect to frame $\{s_{jl}\}$(respectively, $\{s_{il}\}$) $(1\leq l\leq r)$, then we have
\begin{equation}\label{compatibilitybundlesection1}
  [\dholoc{s}{U_j\cap U}] = \left\{\begin{array}{lr}
        \cocyclebundle{j}{i}{E}[\dholoc{s}{U_i\cap U}],  \hspace{10pt}\text{if } z_j\circ {z_i}^{-1} \text{ is holomorphic}\\
        \\
        \cocyclebundle{j}{i}{E}\overline{[\dholoc{s}{U_i\cap U}]},  \hspace{10pt}\text{if } z_j\circ {z_i}^{-1} \text{ is anti-holomorphic}
        \end{array}\right.
\end{equation}
where $\cocyclebundle{j}{i}{E}:U_i\cap U_j\ra GL(r,\C)$ is co-cycle map associated to $d$-holomorphic bundle $E$.

\begin{remark}\rm{
Let $(X,\dholos{X})$ be a Klein surface with $d$-holomorphic atlas $\dholoa{U}=\{(U_i,z_i)\}$ and 
$E$ be $d$-holomorphic bundle of rank $r$ on $X$. Then, $E|_{U_i}$ will be trivial bundle and we 
have frame $\{s_{il}\}_{1\leq l\leq r}$, where $s_{il}\in \Gamma(U_i,E)(1\leq l\leq r)$. 
For two charts $(U_i,z_i)$ and $(U_j,z_j)$, let $\{s_{il}\}$ and $\{s_{jl}\}$, $(1\leq l\leq r)$ 
be frames for $E_{U_i}$ and $E_{U_j}$, respectively and $\cocyclebundle{j}{i}{E}:U_i\cap U_j\ra GL(r,\C)$ 
be co-cycle map, then we have
\begin{equation}\label{cocycle}
  \displaystyle\sum_{l=1}^r s_{il}(\cocyclebundle{i}{j}{E})^l_m = \left\{\begin{array}{lr}
        s_{jm},  \hspace{10pt}\text{if } z_j\circ {z_i}^{-1} \text{ is holomorphic}\\
        \\
        \overline{s}_{jm},  \hspace{10pt}\text{if } z_j\circ {z_i}^{-1} \text{ is anti-holomorphic}
        \end{array}\right.
\end{equation}
or, using Definition \ref{s1d1}
\begin{equation}\label{cocycle1}
s_{im} = \left\{\begin{array}{lr}
   \displaystyle\sum_{l=1}^r s_{jl}(\cocyclebundle{j}{i}{E})^l_m,  ~
   \hspace{10pt}\text{if } z_j\circ {z_i}^{-1} \text{ is holomorphic}\\
        \\
   \displaystyle\sum_{l=1}^r \overline{s}_{jl} (\overlinecocyclebundle{j}{i}{E})^l_m, ~
    \hspace{10pt}\text{if } z_j\circ {z_i}^{-1} \text{ is anti-holomorphic}
        \end{array}\right.
\end{equation}
}
\end{remark}

\begin{theorem}\cite[Theorem 1.4]{W}
Let $(X,\dholos{X})$ be a connected Klein surface. There is a 1-1 correspondence between $d$-holomorphic bundles  and a 
locally free sheaf of $\dhstruct{X}$-modules over $X$.
\end{theorem}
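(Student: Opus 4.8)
The plan is to establish the correspondence by the standard sheaf-theoretic recipe, taking care that at every step the holomorphic-or-antiholomorphic dichotomy encoded in \eqref{dholocompatibility} and \eqref{compatibilitybundlesection} is respected. In one direction, given a $d$-holomorphic vector bundle $E$ of rank $r$ with transition maps $\cocyclebundle{i}{j}{E}$, I would define a presheaf on $X$ by $U\mapsto \Gamma(U,E)$, the $d$-holomorphic sections of $E$ over $U$, where a section is the equivalence class $[\dholoa{U}\cap U, s_{\dholoa{U}\cap U}]$ of a compatible family of holomorphic sections as in \eqref{compatibilitybundlesection}. Restriction maps are the obvious ones, and the sheaf axioms (locality and gluing) hold because they hold chartwise for honest holomorphic sections and the compatibility condition \eqref{compatibilitybundlesection} is itself a local condition on the $U_i\cap U_j$. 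This assignment is a sheaf of $\dhstruct{X}$-modules: over $U_i$ a $d$-holomorphic function restricts to a holomorphic function $\dholoc{f}{U_i}$ (or its conjugate on the overlaps), and multiplication $f\cdot s$ is defined chartwise by $\dholoc{f}{U_i\cap U}\cdot \dholoc{s}{U_i\cap U}$; one checks using \eqref{dholocompatibility} and \eqref{compatibilitybundlesection} that the product again satisfies the compatibility condition, since conjugation distributes over products. Finally, the frame $\{s_{il}\}_{1\le l\le r}$ over $U_i$ from the preceding remark gives an isomorphism $\struct{X}^{dh}(U_i)^{\oplus r}\xrightarrow{\sim}\Gamma(U_i, E)$, so the sheaf is locally free of rank $r$.

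In the other direction, given a locally free $\dhstruct{X}$-module $\mathcal{F}$ of rank $r$, I would choose a $d$-holomorphic atlas $\dholoa{U}=\{(U_i,z_i)\}$ refined enough that $\mathcal{F}|_{U_i}\cong \dhstruct{X}|_{U_i}^{\oplus r}$ via some chosen frame $\{s_{il}\}$, and define transition matrices $\cocyclebundle{i}{j}{E}$ by expressing the frame $\{s_{jm}\}$ in terms of $\{s_{il}\}$ on $U_i\cap U_j$ — but here the change-of-frame formula is exactly \eqref{cocycle}, so on overlaps where $z_j\circ z_i^{-1}$ is antiholomorphic the frame $\{\overline{s}_{jm}\}$ is the one that is a holomorphic combination of the $\{s_{il}\}$, and it is this relation that produces a genuinely holomorphic matrix $\cocyclebundle{i}{j}{E}\circ z_i^{-1}$ on antiholomorphic overlaps (hence $\cocyclebundle{i}{j}{E}$ itself is antiholomorphic there, as required in Definition \ref{s1d1}). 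The cocycle relations and the inverse relations of Definition \ref{s1d1} then follow by composing changes of frame over triple overlaps $U_i\cap U_j\cap U_k$ and tracking the parities of the three transition maps; this is a finite case check using that the composite of two antiholomorphic maps is holomorphic, and that $\overline{\overline{g}}=g$.

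It then remains to verify that the two constructions are mutually inverse, which is essentially a tautology once one is careful with notation: starting from $E$, the sheaf of sections has the obvious frame, and re-extracting transition maps from that frame via \eqref{cocycle} returns the original $\cocyclebundle{i}{j}{E}$; conversely, starting from $\mathcal{F}$ with chosen trivializations, the total space $\coprod U_i\times\C^r/\!\sim$ built from the extracted cocycle has sheaf of sections canonically isomorphic to $\mathcal{F}$, the isomorphism being the identity on each $U_i$ in the chosen frames. One should also check that different choices of atlas or frames give isomorphic outputs, so that the correspondence is well-defined on isomorphism classes — this uses the equivalence relation on $d$-holomorphic atlases and the fact that a change of frame over $U_i$ is multiplication by a $d$-holomorphic $GL(r)$-valued function.

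The main obstacle, and the only place where the Klein-surface setting differs from the classical Riemann-surface statement, is bookkeeping the conjugations consistently: one must make sure that the compatibility condition \eqref{compatibilitybundlesection} for sections, the transition-map condition in Definition \ref{s1d1}, and the $\dhstruct{X}$-module structure all use the same convention for which charts carry a bar, so that $\cocyclebundle{i}{j}{E}$ really is \emph{holomorphic (not antiholomorphic) as a function of the section data} on antiholomorphic overlaps. I expect the cleanest way to keep this straight is to fix, for each pair $(i,j)$, the sign $\varepsilon_{ij}\in\{0,1\}$ according to whether $z_j\circ z_i^{-1}$ is holomorphic or antiholomorphic, rewrite \eqref{dholocompatibility}, \eqref{compatibilitybundlesection}, \eqref{cocycle} uniformly with $\overline{(\cdot)}^{\,\varepsilon_{ij}}$, and then the cocycle condition becomes the single identity $\varepsilon_{ik}\equiv \varepsilon_{ij}+\varepsilon_{jk}\pmod 2$ together with $g_{ik}=g_{kj}\cdot\overline{g}_{ji}^{\,\varepsilon_{kj}}$, after which everything reduces to the classical argument. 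All other verifications (sheaf axioms, local freeness, functoriality of the correspondence) are identical to the holomorphic case and I would only indicate them.
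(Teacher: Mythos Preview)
Your proposal is a correct and carefully worked-out version of the standard ``sheaf of sections $\leftrightarrow$ cocycle'' dictionary, with the necessary conjugation bookkeeping on antiholomorphic overlaps; the $\varepsilon_{ij}$ parity device is a clean way to handle the case split. However, there is nothing to compare against: the paper does not supply its own proof of this theorem. It is stated with the citation \cite[Theorem 1.4]{W} and invoked as an imported result, with no argument given. So your write-up is not an alternative to the paper's proof but a reconstruction of a result the paper takes for granted.
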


\subsection*{Differential forms}
Let $X$ be a Klein surface with $d$-holomorphic atlas $\dholoa{U}=\{(U_i,z_i)\}_{i\in I}$. 
The $d$-holomorphic tangent bundle to $X$ is 1-dimensional $d$-complex vector bundle with 1-cocycle map 
$\cocycletangent{j}{i}{E}:U_i\cap U_j\ra\C^*$ given as,
\[
    \cocycletangent{j}{i}{E} = \left\{\begin{array}{lr}
        (\frac{\partial z_j}{\partial z_i}),  \text{if } z_j\circ z_i^{-1} \text{ is holomorphic}\\
        \\
        (\frac{\partial z_j}{\partial \overline{z}_i}),  \text{if } z_j\circ z_i^{-1} \text{ is anti-holomorphic}
        \end{array}\right.
  \]
  which will be denoted by $T_dX$.
  
The $d$-holomorphic co-tangent bundle is $d$-complex vector bundle of dimension 1 with 1-cocycle map 
$\cocyclecotangent{j}{i}{E}:U_i\cap U_j\ra \C^*$ given as,
\[
    \cocyclecotangent{j}{i}{E} = \left\{\begin{array}{lr}
        (\frac{\partial z_i}{\partial z_j}), \hspace{10pt}
        \text{if } z_j\circ z_i^{-1} \text{ is holomorphic}\\
        \\
        (\frac{\partial \overline{z}_i}{\partial z_j}), \hspace{10pt} 
        \text{if } z_j\circ z_i^{-1} \text{ is anti-holomorphic}
        \end{array}\right.
  \]
  which will be denoted by $T_d^*X$.

It is important to note that a global section of $d$-holomorphic cotangent bundle on $X$ or global 
$d$-holomorphic 1-form is an equivalence class of a family of holomorphic 1-forms 
$(\dholoc{\omega}{U_i})_{i\in I}$ associated to $d$-holomorphic atlas $\dholoa{U}=\{(U_i,z_i)\}_{i\in I}$, 
which will be denoted by $\omega=[\dholoa{U},\omega_{\dholoa{U}}]=\{\dholoc{\omega}{U_i}\}_{i\in I}$ 
with the following compatibility 
conditions
\begin{equation}\label{compatibilitycotangentsection}
    \dholoc{\omega}{U_j} = \left\{\begin{array}{lr}
        \dholoc{\omega}{U_i}, \hspace{10pt}
         \text{if } z_j\circ z_i^{-1} \text{ is holomorphic}\\
        \\
        \dholoc{\overline{\omega}}{U_i}, \hspace{10pt}
         \text{if } z_j\circ z_i^{-1} \text{ is anti-holomorphic}
        \end{array}\right. 
\end{equation}
If $[\dholoc{\omega}{U_j}]$ (respectivey, $[\dholoc{\omega}{U_i}]$) represents a matrix associated to 
$\dholoc{\omega}{U_j}$ (respectively, $\dholoc{\omega}{U_i}$) with respect to frame $\{s_{jl}\}$ (respectively, 
$\{s_{il}\}$) $(1\leq l \leq r)$ and $\cocyclecotangent{j}{i}{X}:U_i\cap U_j\ra GL(r,\C)$ be associated
co-cycle map, then we have 

\begin{equation}\label{compatibilitycotangentsection1}
    [\dholoc{\omega}{U_j}] = \left\{\begin{array}{lr}
        \cocyclecotangent{j}{i}{E} [\dholoc{\omega}{U_i}], \hspace{10pt}
         \text{if } z_j\circ z_i^{-1} \text{ is holomorphic}\\
        \\
        \cocyclecotangent{j}{i}{E}\overline{[\dholoc{\omega}{U_i}]}, \hspace{10pt}
         \text{if } z_j\circ z_i^{-1} \text{ is anti-holomorphic}
        \end{array}\right. 
\end{equation}

Let $\Omega_d^1$ be the sheaf of $d$-holomorphic forms on $X$. Then, $\Omega_d^1(X) = \Gamma(X, T^*_dX)$.

A $d$-complex valued smooth $k$-form on $X$ is, by definition, a family of complex valued smooth forms 
$\{\omega_i\}_{i\in I}$ with respect to $d$-holomorphic atlas $\dholoa{U}=\{(U_i,z_i)\}_{i\in I}$ 
with the compatibility condition \eqref{compatibilitycotangentsection}. 
Let $\mathcal{A}^k_d(X)$ denote the space of 
$d$-complex valued smooth forms on $X$. Note that the space $\mathcal{A}^k(X)$ of smooth 
$k$-forms on $X$ is a subspaces $\mathcal{A}^k_d(X)$. Similarly, one can define the sheaf $\mathcal{A}_d^{(p, q)}$ 
of $d$-complex valued $(p, q)$-forms on $X$.

\subsection{Dolbeault operator}
\begin{lemma}
A $d$-complex valued smooth function $f=[\dholoa{U},f_{\dholoa{U}}]$ on Klein surface $X$ with 
$d$-holomorphic atlas $\dholoa{U}=\{(U_i,z_i)\}_{i\in I}$ is $d$-holomorphic if and only if 
$\overline{\partial}f\equiv 0$, where $\overline{\partial}:\mathcal{A}^0_d\ra \mathcal{A}^1_d$ is 
an operator such that 
$\overline{\partial}(\dholoc{f}{U_i})=\frac{\partial \dholoc{f}{U_i}}{\overline{z}_i}d\overline{z}_i$.
\end{lemma}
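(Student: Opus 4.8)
The plan is to verify the claim chart by chart, reducing it to the classical fact that a smooth function on an open subset of $\C$ is holomorphic if and only if it is annihilated by $\partial/\partial\overline z$. First I would check that the operator $\overline\partial$ is well-defined on $\mathcal{A}^0_d$, i.e. that the local formula $\overline\partial(\dholoc{f}{U_i}) = \frac{\partial \dholoc{f}{U_i}}{\partial\overline z_i}\,d\overline z_i$ respects the compatibility condition \eqref{compatibilitycotangentsection}. On an overlap $U_i\cap U_j$ there are two cases. If $z_j\circ z_i^{-1}$ is holomorphic, then $\dholoc{f}{U_j} = \dholoc{f}{U_i}$ as functions on the overlap, and since $z_j$ depends holomorphically on $z_i$ the chain rule gives $\frac{\partial \dholoc{f}{U_j}}{\partial\overline z_j} = \frac{\partial \dholoc{f}{U_i}}{\partial\overline z_i}\cdot\overline{\left(\frac{\partial z_i}{\partial z_j}\right)}$, which is exactly the transformation of a $(0,1)$-form; so $\overline\partial f$ patches as a section of the relevant line bundle with the ``holomorphic'' gluing rule. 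If $z_j\circ z_i^{-1}$ is anti-holomorphic, then $\dholoc{f}{U_j} = \overline{\dholoc{f}{U_i}}$, and now $z_j$ depends anti-holomorphically on $z_i$; applying the chain rule to $\overline{\dholoc{f}{U_i}}$ shows $\frac{\partial \dholoc{f}{U_j}}{\partial\overline z_j}$ is a multiple of $\frac{\partial \overline{\dholoc{f}{U_i}}}{\partial\overline z_j} = \overline{\left(\frac{\partial \dholoc{f}{U_i}}{\partial\overline z_i}\right)}\cdot(\text{holomorphic factor})$, so again $\overline\partial f = \{\overline\partial(\dholoc{f}{U_i})\}_i$ satisfies the anti-holomorphic compatibility condition \eqref{compatibilitycotangentsection}. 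Hence $\overline\partial f\in\mathcal{A}^1_d(X)$ is genuinely a global $d$-complex $1$-form.

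With $\overline\partial$ well-defined, the equivalence is immediate. By definition $f=[\dholoa{U},f_{\dholoa{U}}]$ is $d$-holomorphic precisely when each local representative $\dholoc{f}{U_i}$ is holomorphic on $U_i$ (i.e. $\dholoc{f}{U_i}\circ z_i^{-1}$ is holomorphic on $z_i(U_i)$) and the family satisfies the compatibility condition \eqref{dholocompatibility}. But a smooth function $g$ on an open subset of $\C$ is holomorphic iff $\partial g/\partial\overline z\equiv 0$; applying this to $g = \dholoc{f}{U_i}\circ z_i^{-1}$ shows $\dholoc{f}{U_i}$ is holomorphic iff $\overline\partial(\dholoc{f}{U_i})=\frac{\partial \dholoc{f}{U_i}}{\partial\overline z_i}\,d\overline z_i \equiv 0$. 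Since $\overline\partial f$ vanishes globally iff each $\overline\partial(\dholoc{f}{U_i})$ vanishes, and the compatibility condition \eqref{compatibilitycotangentsection} for the family $\{\overline\partial(\dholoc{f}{U_i})\}_i$ is automatically inherited from \eqref{dholocompatibility} for $\{\dholoc{f}{U_i}\}_i$ (this is the well-definedness step already carried out), we conclude $f$ is $d$-holomorphic $\iff$ $\overline\partial f\equiv 0$.

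The only genuinely delicate point is the well-definedness of $\overline\partial$ in the anti-holomorphic overlap case: one must be careful that $\overline\partial$, applied to the conjugate $\overline{\dholoc{f}{U_i}}$ in the coordinate $z_j$ which is itself an anti-holomorphic function of $z_i$, produces the conjugate of $\overline\partial(\dholoc{f}{U_i})$ in the coordinate $z_i$ up to a holomorphic cocycle factor — i.e. that the Wirtinger derivatives interact correctly with complex conjugation of both the function and the coordinate. This is a short but bookkeeping-heavy chain-rule computation; everything else is a direct translation of the classical one-variable statement. I would present the well-definedness as the substance of the argument and dispatch the equivalence in one or two lines.
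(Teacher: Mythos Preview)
Your proposal is correct and follows essentially the same approach as the paper's proof: first verify that the local expressions $\overline\partial(\dholoc{f}{U_i})$ patch together under the compatibility condition \eqref{compatibilitycotangentsection} via a chain-rule computation in the two overlap cases, then observe that the equivalence reduces chartwise to the classical Cauchy--Riemann characterization of holomorphy. The paper presents exactly this argument, writing out the two transformation identities for $\partial\dholoc{f}{U_j}/\partial\overline z_j$ and then dispatching the equivalence in one line.
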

\begin{proof}
Let $X$ be a  Klein surface with $d$-holomorphic atlas $\dholoa{U}=\{(U_i,z_i)\}_{i\in I}$ and $f=\dholo{f}$ be 
$d$-smooth function. First, note that $\overline{\partial}(f)$ is $d$-complex valued smooth 1 form as the following 
compatibility conditions holds
\[
    \frac{\partial \dholoc{f}{U_j}}{\partial \overline{z}_j}= \left\{\begin{array}{lr}
        \frac{\partial \overline{z}_i}{\partial \overline{z}_j}\frac{\partial \dholoc{f}{U_{\beta}}}{\partial \overline{z}_i}
        =\frac{\partial \overline{z}_i}{\partial \overline{z}_j}\frac{\partial \dholoc{f}{U_i}}{\partial \overline{z}_i},  
        \hspace{10pt} \text{if } z_j\circ z_i^{-1} \text{ is holomorphic}\\
        \\
        \frac{\partial z_i}{\partial \overline{z}_j}\frac{\partial \dholoc{f}{U_j}}{\partial z_i}=
        \frac{\partial z_i}{\partial \overline{z}_j}\overline{(\frac{\partial \dholoc{f}{U_i}}{\partial \overline{z}_i})}, 
        \hspace{10pt} \text{if } z_j\circ z_i^{-1} \text{ is anti-holomorphic}
        \end{array}\right. 
  \]
 
Now, it is easy to note that a given $d$-complex valued smooth function $f=[\dholoa{U},f_{\dholoa{U}}]$ 
is $d$-holomorphic if and only if $\overline{\partial}(f)=0$.
\end{proof}

Since the operator $\overline{\partial}$ satisfies Leibnitz identity, we have the following definition.

\begin{definition}\rm{
The operator on Klein surface $X$, $\overline{\partial}:\mathcal{A}_d^0\ra \mathcal{A}_d^{(0,1)}$ taking a $d$-smooth 
function to $d$-complex valued smooth $(0,1)$ form is called the Cauchy-Riemann operator.
}
\end{definition}

\subsection*{Dolbeault operator on smooth $d$-complex bundle on Klein surface}
An $\R$-linear sheaf morphism
$$
D'':\mathcal{A}_d^0(E)\ra \mathcal{A}_d^{(0,1)}(E)
$$
satisfying the following identity:
$$
D''(fs)=(\overline{\partial}f)s+f\bigl(D''(s)\bigl)
$$
for $f\in\mathcal{A}^0_d$ and $s\in \mathcal{A}^0_d(E)$  is called Dolbeault operator for $d$-complex 
smooth bundle on Klein surface.

Let $E$ be a smooth $d$-complex bundle with a given $d$-holomorphic structure $\{(U_i,g_i)\}_{i\in I}$. 
Le $\mathcal{E}$ denote the associated $d$-holomorphic bundle, and let 
$[\dholoa{U},s_{\dholoa{U}}]\in \mathcal{A}^0_d(E)$, then we have
\[
    [\dholoc{s}{U_j}]= \left\{\begin{array}{lr}
        \cocyclebundle{j}{i}{\mathcal{E}}[\dholoc{s}{U_i}], \hspace{10pt}
         \text{if } z_j\circ z_i^{-1} \text{ is holomorphic}\\
        \\
        \cocyclebundle{j}{i}{\mathcal{E}}\overline{[\dholoc{s}{U_i}]}, \hspace{10pt}
         \text{if } z_j\circ z_i^{-1} \text{ is anti-holomorphic}
        \end{array}\right. 
  \]
where $\cocyclebundle{j}{i}{\mathcal{E}}:(U_i\cap U_j)\ra GL(r,\C)$ is co-cycle map, which is 
holomorphic (anti-holomorphic), whenever $z_j\circ z_i^{-1}$ is holomorphic (anti-holomorphic, respectively).

Now,
\begin{equation}\label{compatibilitycotangentbundle}
\frac{\partial [\dholoc{s}{U_j}]}{\partial \overline{z}_j}= \left\{\begin{array}{lr} 
\frac{\partial (\cocyclebundle{j}{i}{\mathcal{E}}[\dholoc{s}{U_i}])}{\partial \overline{z}_i}\frac{\partial \overline{z}_i}{\partial \overline{z}_j}
=\frac{\partial (\cocyclebundle{j}{i}{\mathcal{E}}[\dholoc{s}{U_i}])}{\partial \overline{z}_i}\frac{\partial \overline{z}_i}{\partial \overline{z}_j}
=\cocyclebundle{j}{i}{\mathcal{E}}\frac{\partial [\dholoc{s}{U_i}]}{\partial \overline{z}_i}\frac{\partial \overline{z}_i}{\partial \overline{z}_j}, 
\hspace{10pt} \text{if } z_j\circ z_i^{-1} \text{ is holomorphic}\\
        \\
\frac{\partial (\cocyclebundle{j}{i}{\mathcal{E}}\overline{[\dholoc{s}{U_i}]})}{\partial z_i}\frac{\partial z_i}{\partial \overline{z}_j}
=\frac{\partial (\cocyclebundle{j}{i}{\mathcal{E}}\overline{[\dholoc{s}{U_i}]})}{\partial z_i}\frac{\partial z_i}{\partial \overline{z}_j}
=\cocyclebundle{j}{i}{\mathcal{E}}\overline{(\frac{\partial [\dholoc{s}{U_i}]}{\partial \overline{z}_i})}\frac{\partial z_i}{\partial \overline{z}_j},  
\hspace{10pt} \text{if } z_j\circ z_i^{-1} \text{ is anti-holomorphic}
        \end{array}\right. 
\end{equation}
Since \eqref{compatibilitycotangentsection1} is being satisfied, for a given $d$-smooth section 
  $s=[\dholoa{U},s_{\dholoa{U}}]\in \mathcal{A}^0_d(E)$, we have an $E$ valued $d$-smooth (0,1)-form $\overline{\partial}_E(s)$ 
  such that, $\overline{\partial}_E(s)|_{U_i}=\overline{\partial}(\dholoc{s}{U_i})$ with the compatibility conditions \eqref{compatibilitycotangentbundle}

The Leibnitz type identity also holds because it holds for $\overline{\partial}$, hence we have a Cauchy-Riemann operator 
for $E$, $\overline{\partial}_E$ such that $\overline{\partial}_E|_{U_i}=\overline{\partial}$.

\subsection*{\v{C}ech description of Dolbeault isomorphism:}
Let $E$ be $d$-holomorphic vector bundle on a Klein surface $X$ and $\overline{\partial}_E$ be the Cauchy-Riemann 
operator for $E$. 
Then, we have
$$
\overline{\partial}_E(\sum_{k=1}^l s_k \dholoc{f}{U_j k})=
\sum_{k=1}^l s_k \otimes \frac{\partial \dholoc{f}{U_j k}}{\partial \overline{z}_j}d\overline{z}_j
$$
on chart $(U_j,z_j)$.

Let $Z_d^{(p,q)}(E)\subset \mathcal{A}_d^{(p,q)}(E)$ be the sheaf of $\overline{\partial}_E$ closed 
$d$-complex-valued smooth $(p,q)$ forms on $X$ with values in $E$. Then, we have the following short exact sequence
$$
0\ra \dhform{d}(E)\ra \mathcal{A}_d^{(1,0)}(E)\xrightarrow{\overline{\partial}_E} Z_d^{(1,1)}(E)\ra 0
$$
The above exact sequence induces a long exact sequence
$$
\dots\ra \mathcal{A}_d^{(1,0)}(E)(X)\xlongrightarrow{\overline{\partial}_E} Z_d^{(1,1)}(E)(X)
\xlongrightarrow{\delta} H^1\bigl(X,\dhform{d}(E)\bigl)\ra H^1\bigl(X,\mathcal{A}_d^{(1,0)}(E)\bigl)\ra\dots
$$
Since $\mathcal{A}_d^{(p,q)}(E)$ is fine sheaf, we can conclude that 
\begin{equation}\label{s4dolbeaultiso}
H^{(1,1)}(X,E)\cong H^1\bigl(X,\dhform{d}(E)\bigl).
\end{equation}

The above sheaf isomorphism can also be described using \v{C}ech cohomology as follows:
Let $\dholoa{U}=\{(U_i,z_i)\}_{i\in I}$ be $d$-holomorphic atlas on $X$. Let $a\in H^{(1,1)}(X,E)$ be a Dolbeault 
cohomology class, and $[\dholoa{U},\omega_{\dholoa{U}}]\in Z_d^{(1,1)}(E)(X)$ be a $\overline{\partial}_E$ 
closed-form representing $a$.

Since $E|_{U_i}$ is holomorphically trivial, by the Dolbeault-Grothendieck lemma, there exists smooth form 
$\tau_i\in \mathcal{A}_d^{(1,0)}(E)(U_i)$ for each $i\in I$ such that $\overline{\partial}_E(\tau_i)=\omega_i$. Let
\[
    u_{ij} = \left\{\begin{array}{lr}
        \tau_j|_{U_i\cap U_j}-\tau_i|_{U_{i}\cap U_j},  \text{if } z_j\circ z_i^{-1} \text{ is holomorphic}\\
        \\
        \tau_j|_{U_i\cap U_j}-\overline{\tau}_i|_{U_{i}\cap U_j},  \text{if } z_j\circ z_i^{-1} \text{ is anti-holomorphic}
        \end{array}\right.
  \]
then, $\overline{\partial}_E(u_{ij})=0$. On chart $(U_i\cap U_j\cap U_k, z_k)$, we have
\[
    u_{ik} = \left\{\begin{array}{lr}
        u_{jk}+u_{ij},  \text{if } z_k\circ z_j^{-1} \text{ is holomorphic}\\
        \\
        u_{jk}+\overline{u}_{ij},  \text{if } z_k\circ z_j^{-1} \text{ is anti-holomorphic}
        \end{array}\right.
  \]
Hence, the family $\{u_{ij}\}_{i,j\in I}$ satisfies the co-cycle condition and the associated cohomology class will be equal to $\delta(a)$.

\subsection{Connection and curvature}
Let $E$ be a smooth $d$-complex vector bundle on a Klein surface $(X,\dholos{X})$.

A $d$-smooth connection is an $\R$-linear sheaf morphism $D:\mathcal{A}^0_d(E)\ra \mathcal{A}^{1}_d(E)$ such that the following identity holds
$$
D(fs)=d(f)s+fD(s)
$$
where $f\in \mathcal{A}^0_d$ and $s\in \mathcal{A}^0_d(E)$. 

Let $s_{i}=\{s_{i 1},s_{i 2},\dots,s_{i r}\}$ be a local frame for $E$ over chart $(U_{i},z_{i})$ i.e.
\begin{enumerate}
\item $s_{i l}\in \mathcal{A}^0_d(E)(U_i)$ for all $l\in \{1,2,\dots,r\}$ (smooth sections)
\item $\{s_{i 1}(x),s_{i 2}(x),\dots,s_{i r}(x)\}$ is basis of $E(x)=E_x/\mathfrak{m}_xE_x$ for all $x\in U_i$.
\end{enumerate}

Then, for a given $d$-smooth connection $D$ on $E$ we can write,
$$
D(s_{i l})=\sum s_{i m}(\omega_{U_i})_m^l ~\text{ and }~
D(\overline{s}_{i l})=\sum \overline{s}_{i m}(\dholoc{\overline{\omega}}{U_i})_m^l
$$
where $(\dholoc{\omega}{U_i})_m^l\in \mathcal{A}^1_d(\dholoc{U}{i})$.

We call the matrix-valued $d$-smooth 1-form $\dholoc{\omega}{U_i}=(\dholoc{\omega}{U_i})_j^i$, 
the connection form of $D$ with respect to frame $s_{i}$ for $E_{\dholoc{U}{i}}$.

Now, we will see how connection form depends on the frame. Let $s_{\beta}=\{s_{\beta 1}, s_{\beta 2},\dots,s_{\beta r}\}$ be local frame over some another chart $(U_j,z_j)$, then there is a co-cycle map $\cocyclebundle{j}{i}{E}:U_{i}\cap U_{j}\ra GL(r,\C)$ such that the condition \eqref{cocycle1} holds.

Let $\dholoc{\omega}{U_i}$ and $\dholoc{\omega}{U_j}$ be connection forms with respect to frames $s_{i}=\{s_{i 1},s_{i 2},\dots, s_{i r}\}$ and $s_{j}=\{s_{j 1},s_{j 2},dots,s_{j r}\}$ for $E_{U_i}$ and $E_{U_j}$, respectivey. Then we have the compatibility codition  as follows
\[
    \dholoc{\omega}{U_j} = \left\{\begin{array}{lr}
        (\cocyclebundle{i}{j}{E})^{-1}.\dholoc{\omega}{U_i}.\cocyclebundle{i}{j}{E}+(\cocyclebundle{i}{j}{E})^{-1}. d(\cocyclebundle{i}{j}{E}),\hspace{10pt}  \text{if } z_{j}\circ z_{i}^{-1} \text{ is holomorphic}\\
        \\
        (\overlinecocyclebundle{i}{j}{E})^{-1}.\dholoc{\overline{\omega}}{U_i}.\overlinecocyclebundle{i}{j}{E}+(\overlinecocyclebundle{i}{j}{E})^{-1}. d(\overlinecocyclebundle{i}{j}{E}), \hspace{10pt} \text{if } z_{j}\circ z_{i}^{-1} \text{ is anti-holomorphic}
        \end{array}\right. 
\]
which can be described following the same line of arguements as in \eqref{connectioncompatibility}.

We can extend a connection $D:\mathcal{A}^0_d(E)\ra \mathcal{A}^1_d(E)$ to a $\R$-linear map $D:\mathcal{A}^p_d(E)\ra \mathcal{A}^{p+1}_d(E)$ for $(p>0)$ by setting,
$$
D(\sigma.\phi)=(D\sigma)\wedge\phi+\sigma.d(\phi)
$$
for $\sigma\in \mathcal{A}^0_d(E)$ and $\phi\in \mathcal{A}^p_d$.

Using the above extension, we can define the curvature $R$ associated to connection $D$ to be a $d$-complex valued 2-form taking values in $\mathcal{E}nd_{\dhstruct{X}}(E)$ as follows
$$
R=D\circ D:\mathcal{A}^0_d(E)\ra \mathcal{A}^2_d(E)
$$
On some chart $(U_i,z_i)$, if connection form of $D$ is $\dholoc{\omega}{U_i}$ with respect to frame 
$s_{i}=\{s_{i 1},s_{i 2},\dots,s_{i r}\}$ then curvature form $\dholoc{\Omega}{U_i}$ will be
\begin{equation}\label{cfc}
\dholoc{\Omega}{U_i}=d(\dholoc{\omega}{U_i})+\dholoc{\omega}{U_i}\wedge\dholoc{\omega}{U_i}
\end{equation}
which can be described using the same line of arguments as in \cite[1.12]{SK} 

If $\dholoc{\omega}{U_j}$ is connection form with respect to another frame $s_j$ for $E_{U_j}$
on chart $(U_j,z_j)$ and curvature form is $\dholoc{\Omega}{U_j}$, then the compatibility 
condition between $\dholoc{\Omega}{U_i}$ and $\dholoc{\Omega}{U_j}$ is given by
\begin{equation}\label{curc}
    \dholoc{\Omega}{U_j} = \left\{\begin{array}{lr}
        (\cocyclebundle{i}{j}{E})^{-1}.\dholoc{\Omega}{U_i}.\cocyclebundle{i}{j}{E},\hspace{10pt} 
         \text{if } z_j\circ z_i^{-1} \text{ is holomorphic}\\
        \\
        (\overlinecocyclebundle{i}{j}{E})^{-1}.\dholoc{\overline{\Omega}}{U_i}.\overlinecocyclebundle{i}{j}{E}, 
        \hspace{10pt} \text{if } z_j\circ z_i^{-1} \text{ is anti-holomorphic}
        \end{array}\right. 
\end{equation}
which is described as follows:
If $z_j\circ z_i^{-1}$ is anti-holomorphic, then
\[
\begin{aligned}
D^2(s_{jm})=D^2\bigl(\displaystyle\sum_l \overline{s}_{il}(\overlinecocyclebundle{i}{j}{E})^l_m\bigl)&=D\Bigl(\sum_lD(\overline{s}_{il})(\overlinecocyclebundle{i}{j}{E})^l_m+\sum_l\overline{s}_{il}d\bigl((\overlinecocyclebundle{i}{j}{E})^l_m\bigl)\Bigl)\\
&=\sum_lD^2(\overline{s}_{il})(\overlinecocyclebundle{i}{j}{E})^l_m\\
&=\sum_l(\sum_n\overline{s}_{in}(\dholoc{\overline{\Omega}}{U_i})^n_l)(\overlinecocyclebundle{i}{j}{E})^l_m\\
&=\sum_l(\sum_n(\sum_{n'} s_{jn'}(\cocyclebundle{i}{j}{E})^{n'}_n)(\dholoc{\overline{\Omega}}{U_i})^n_l)(\overlinecocyclebundle{i}{j}{E})^l_m\\
&=\sum_m s_{jm}(\cocyclebundle{j}{i}{E} \dholoc{\overline{\Omega}}{U_i}\overlinecocyclebundle{i}{j}{E})^m_l
\end{aligned}
\]
Similarly, when $z_j\circ z_i^{-1}$ is holomorphic, we have
$$
D^2(s_{jm})=\sum_m s_{jm}(\cocyclebundle{j}{i}{E}\dholoc{\Omega}{U_i}\cocyclebundle{i}{j}{E})^m_l.
$$
Hence, we have the compatibility condition \eqref{curc}.

\subsection*{Compatibility with $d$-holomorphic structure}
Let $D$ be a $d$-smooth connection on a smooth $d$-complex vector bundle $E$. 
Note that we have 
$$
\mathcal{A}^1_d=\mathcal{A}_d^{(1,0)}+\mathcal{A}_d^{(0,1)}
$$
and a connection $D$ decomposes as follows
$$
D=D^{(1,0)}+D^{(0,1)}
$$
where $D^{(1,0)}:\mathcal{A}^0_d(E)\ra \mathcal{A}_d^{(1,0)}(E)$ and $D^{(0,1)}:\mathcal{A}^0_d(E)\ra \mathcal{A}_d^{(0,1)}(E)$ are $\R$-linear sheaf morphisms satifying Leibnitz type identity.

A $d$-smooth connection on smooth $d$-complex bundle $E$ is said to be compatible with the $d$-holomorphic structure if $D^{(0,1)}\equiv \overline{\partial}_E$

Let $D$ be a $d$-smooth connection then its curvature $R$ will have decomposition as follows
$$
R=R^{(2,0)}+R^{(1,1)}+R^{(0,2)}
$$
where $R^{(p,q)}\in \mathcal{A}_d^{(p,q)}\bigl(\mathcal{E}nd_{\dhstruct{X}}(E)\bigl)$.

Let $(U_i,z_i)$, $(U_j,z_j)$ be two charts on $X$ and  $s_i=\{s_{il}\}$, $s_j=\{s_{jl}\}$ $(1\leq l\leq r)$ 
be frame for $E_{U_i}$ and $E_{U_j}$, respectively. Let $\cocyclebundle{j}{i}{E}:U_i\cap U_j\ra GL(r,\C)$ 
be co-cycle map with respect to frames $s_i$ and $s_j$. Let $\dholoc{\omega}{U_i}$ and $\dholoc{\Omega}{U_i}$ 
($\dholoc{\omega}{U_j}$ and $\dholoc{\Omega}{U_j}$) be connection and curvature forms with respect to frame 
$s_i$($s_j$, respectively). 

Note that $(\dholoc{\omega}{U_i})^l_m\in \mathcal{A}_d^{(1,0)}(U_i)$ and $R=D\circ D$ so, we have 
$R^{(0,2)}\circ R^{(0,2)}=\overline{\partial}_E\circ\overline{\partial}_E\equiv 0$. Therefore,
\begin{equation}\label{s4eq1}
\begin{aligned}
&\dholoc{\Omega}{U_i}^{(2,0)}=\partial \dholoc{\omega}{U_i}+\dholoc{\omega}{U_i}\wedge \dholoc{\omega}{U_i}\\
&\dholoc{\Omega}{U_i}^{(1,1)}=\overline{\partial}\dholoc{\omega}{U_i}\text{ and }\\
&\dholoc{\Omega}{U_i}^{(0,2)}=0
\end{aligned}
\end{equation}
which implies, $\overline{\partial}\dholoc{\Omega}{U_i}^{(1,1)}=0$, or $\overline{\partial}_ER^{(1,1)}=0$.

Thus, every $d$-smooth connection compatible with $d$-holomorphic structure defines a Dolbeault cohomology class $[R^{(1,1)}]\in H^{(1,1)}\bigl(X,\mathcal{E}nd_{\dhstruct{X}}(E)\bigl)$.

\subsection*{Hodge star operator}
Let $(X,\dholos{X})$ be a Klein surface and $\dholoa{U}=\{(U_i,z_i)\}_{i\in I}$ be dianalytic 
atlas on $X$. We can define a line bundle $L$ with same trivializing cover as dianalytic atlas has, 
along with the co-cycle map $s_{ij}:U_i\cap U_j\ra \{\pm{1}\}$ given as follows, 
\[
    s_{ij} = \left\{\begin{array}{lr}
        1,\hspace{10pt}  \text{if } z_j\circ z_i^{-1} \text{ is holomorphic}\\
        \\
        -1, \hspace{10pt} \text{if } z_j\circ z_i^{-1} \text{ is anti-holomorphic}
        \end{array}\right. 
  \]
This line bundle $L$ is known as the \emph{orientation bundle} (see \cite{W, IBMD}). 

A smooth \emph{$d$-Hermitian} metric $h$ on Klein surface is given by a family of $C^{\infty}$ tensors 
$\{\dholoc{h}{U_i}dz_i\wedge d\overline{z}_i\}_{i\in I}$ with the following compatibility condition
\begin{equation}\label{hermicomp}
    \dholoc{h}{U_j} = \left\{\begin{array}{lr}
        \dholoc{h}{U_i}\frac{\partial z_i}{\partial z_j}
        \frac{\partial \overline{z}_i}{\partial \overline{z}_j},\hspace{10pt}  
        \text{if } z_j\circ z_i^{-1} \text{ is holomorphic}\\
        \\
        \dholoc{\overline{h}}{U_i}\frac{\partial \overline{z}_i}{\partial z_j}
        \frac{\partial z_i}{\partial \overline{z}_j}, \hspace{10pt} 
        \text{if } z_j\circ z_i^{-1} \text{ is anti-holomorphic}
        \end{array}\right. 
\end{equation}
where $\dholoc{h}{U_i}:U_i\ra \C$ is smooth function along with 
$\dholoc{\overline{h}}{U_i}=\dholoc{h}{U_i}>0$. 

For a given $d$-Hermitian metric on Klein surface $(X,\dholos{X})$, say $h$, we can associate a 
real $(1,1)$ form $\Theta$ such that $\dholoc{\Theta}{U_i}=
\frac{i}{2}\dholoc{h}{U_i}dz_i\wedge d\overline{z}_i$ with the following compatibility condition
\begin{equation}\label{funda_form}
    \dholoc{\Theta}{U_j} = \left\{\begin{array}{lr}
        \dholoc{\Theta}{U_i},\hspace{10pt}  
        \text{if } z_j\circ z_i^{-1} \text{ is holomorphic}\\
        \\
        -\dholoc{\Theta}{U_i}, \hspace{10pt} 
        \text{if } z_j\circ z_i^{-1} \text{ is anti-holomorphic}
        \end{array}\right. 
\end{equation}
This real $(1,1)$ form is known as fundamental form associated to $d$-Hermitian metric $h$.
Let $(U_i,z_i)$ be some chart on $X$, where $z_i=x_i+i y_i$. Note that 
$dz_i\wedge d\overline{z}_i=-2i dx_i\wedge dy_i$, which implies $\dholoc{\Theta}{U_i}=
\frac{i}{2}\dholoc{h}{U_i}dz_i\wedge d\overline{z}_i=\dholoc{h}{U_i}dx_i\wedge dy_i=
\dholoc{\Phi}{U_i}$ (say). The family of $\{\dholoc{\Phi}{U_i}\}_{i\in I}$ satisfies compatibility 
condition similar to \ref{funda_form}. Hence, $\Phi\in \mathcal{A}^2(L)$, which is called volume form.

\begin{remark}\rm{
In general, one can define fundamental form associated to a given $d$-complex manifold with 
$d$-Hermitian metric. A $d$-complex manifold with $d$-Hermitian metric $h$ and associated 
fundamental form $\Theta$ is called $d$-K\"ahler manifold, if $d\Theta=0$. Note that Klein surfaces
are $d$-K\"ahler manifolds.
}
\end{remark}  

A given $d$-Hermitian metric on Klein surface $(X,\dholos{X})$ induces $d$-Hermitian metric on bundle 
$\mathcal{A}^{p,q}_d$ of $d$-smooth $(p,q)$ forms on $X$ for $0\leq p,q\leq 1$, which is described 
as follows:

Let $\phi,\psi\in \mathcal{A}^{p,q}_d$ and $(U_i,z_i)$ be chart on $X$. Let $\phi|_{U_j}=
\dholoc{\phi}{U_j}(dz_j)^p\wedge (d\overline{z}_j)^q$ and 
$\psi|_{U_j}=\dholoc{\psi}{U_j}(dz_j)^p\wedge (d\overline{z}_j)^q$, then 
$$
\dholoc{\langle \phi, \psi\rangle}{U_j}=
\dholoc{\phi}{U_j}\dholoc{\overline{\psi}}{U_j}\bigl\langle (dz_j)^p
\wedge (d\overline{z}_j)^q,(dz_j)^p\wedge (d\overline{z}_j)^q\bigl\rangle.
$$

Using the orientation line bundle, we have the Hodge star operator 
$\star \colon \mathcal{A}^r \ra \mathcal{A}^r(L)$ (see \cite[\S 2]{W}). 
Since $\mathcal{A}_d^r = \mathcal{A}^r \oplus i\mathcal{A}^{2-r}(L)$, 
one can extend the Hodge star operator to $\star \colon \mathcal{A}^r_d \ra \mathcal{A}_d^{2-r}$ 
as follows:
If $\omega = \sigma + i\tau$, then $\star \omega := \star \sigma +i \star \tau$, where 
$\star \sigma \in \mathcal{A}^{2-r}(L)$ and $\star \tau \in \mathcal{A}^{2-r}$.

For a $d$-complex valued $(p,q)$ form $\psi$ on Klein surface, we can have a $d$-complex valued 
$(1-q,1-p)$ form $\star\psi$ such that
$$
\langle\phi,\psi\rangle\Phi=\phi\wedge \star\overline{\psi}
$$
where $\star$ is Hodge star operator.

Let $\phi,\psi\in \mathcal{A}^{p,q}_d$ such that $\dholoc{\phi}{U_j}=
\dholoc{f}{U_j}(dz_j)^p\wedge (d\overline{z}_j)^q$ and $\dholoc{\psi}{U_j}=
\dholoc{g}{U_j}(dz_j)^p\wedge (d\overline{z}_j)^q$, where $0\leq p,q\leq 1$.
Then, $\dholoc{\langle\phi,\psi\rangle}{U_j}=\dholoc{f}{U_j}\dholoc{\overline{g}}{U_j}
\bigl\langle (dz_j)^p\wedge (d\overline{z}_j)^q,(dz_j)^p\wedge (d\overline{z}_j)^q\bigl\rangle$ and 
\[
\begin{array}{ll}
\dholoc{\langle\phi,\psi\rangle}{U_j}\dholoc{\Phi}{U_j}&=
\dholoc{f}{U_j}\dholoc{\overline{g}}{U_j}
\bigl\langle (dz_j)^p\wedge (d\overline{z}_j)^q,(dz_j)^p
\wedge (d\overline{z}_j)^q\bigl\rangle
\{\frac{i}{2}\dholoc{h}{U_j}dz_j\wedge d\overline{z}_j\}\\
\\
&=\dholoc{\phi}{U_j}\wedge (-1)^{(1-p)q}
\frac{i}{2}\dholoc{\overline{g}}{U_j}\dholoc{h}{U_j}
\bigl\langle (dz_j)^p\wedge (d\overline{z}_j)^q,(dz_j)^{(1-p)}\wedge (d\overline{z}_j)^{(1-q)}\bigl\rangle.
\end{array}
\]

This implies that
$$
\star\dholoc{\overline{\psi}}{U_j}=(-1)^{(1-p)q}\frac{i}{2}\dholoc{\overline{g}}{U_j}
\dholoc{h}{U_j}\bigl\langle (dz_j)^p\wedge (d\overline{z}_j)^q,(dz_j)^p\wedge (d\overline{z}_j)^q\bigl\rangle
(dz_j)^{(1-p)}\wedge (d\overline{z}_j)^{(1-q)}
$$ 
Also, we have $\dholoc{\overline{\psi}}{U_j}=\dholoc{\overline{g}}{U_j}(d\overline{z}_j)^p\wedge 
(dz_j)^q=(-1)^{pq}\dholoc{\overline{g}}{U_j}(dz_j)^q\wedge (d\overline{z}_j)^p
$

This further implies that
$$
\star\dholoc{\psi}{U_j}=(-1)^q\frac{i}{2}\dholoc{g}{U_j}\dholoc{\overline{h}}{U_j}
\bigl\langle (dz_j)^q\wedge (d\overline{z}_j)^p,(dz_j)^q\wedge (d\overline{z}_j)^p\bigl\rangle
(dz_j)^{(1-q)}
\wedge (d\overline{z}_j)^{(1-p)}
$$ 
Note  that $\phi\in \mathcal{A}^{p,q}_d$, so we have the compatibility condition as follows
\begin{equation}\label{pqformcomp}
    \dholoc{g}{U_j} = \left\{\begin{array}{lr}
        \dholoc{g}{U_i}(\frac{\partial z_i}{\partial z_j})^p
        (\frac{\partial \overline{z}_i}{\partial \overline{z}_j})^q,\hspace{10pt}  
        \text{if } z_j\circ z_i^{-1} \text{ is holomorphic}\\
        \\
        \dholoc{g}{U_i}(\frac{\partial z_i}{\partial \overline{z}_j})^q
        (\frac{\partial \overline{z}_i}{\partial z_j})^p, \hspace{10pt} 
        \text{if } z_j\circ z_i^{-1} \text{ is anti-holomorphic}
        \end{array}\right. 
\end{equation}
Hence, using \ref{hermicomp} and \ref{pqformcomp}, we have the compatibility condition as follows
\[
    \dholoc{\star\psi}{U_j} = \left\{\begin{array}{lr}
        \dholoc{\star\psi}{U_i},\hspace{10pt}  
        \text{if } z_j\circ z_i^{-1} \text{ is holomorphic}\\
        \\
        -\dholoc{\overline{\star\psi}}{U_i}, \hspace{10pt} 
        \text{if } z_j\circ z_i^{-1} \text{ is anti-holomorphic}
        \end{array}\right. 
\]
i.e., $\star\psi\in \mathcal{A}_d^{(1-q,1-p)}(L)$ for a given $d$-complex valued $(p,q)$ form 
$\psi\in \mathcal{A}_d^{(p,q)}$, where ${0\leq p,q\leq 1}$. It can be easily check that 
$(\star\overline{\psi})=\overline{(\star\psi)}$.

\section{Atiyah exact sequence}\label{sec-doperator}

\subsection{First order differential $d$-operators}
For given two $d$-holomorphic vector bundles $E$ and $F$ over a Klein surface $(X,\dholos{X})$, 
we will denote by $Hom_{\twistC}(E,F)=[\dholoa{U},P_{\dholoa{U}}]$ i.e. maximal family 
$\{\dholoc{P}{U_i}:E|_{U_i}\ra F|_{U_i}\}_{i\in I}$ of $\C$-linear morphisms. Let $[\dholoc{P}{U_i}]$ 
denotes a matrix associated to morphism $\dholoc{P}{U_i}$, then we have the following compatibility 
condition
\begin{equation}\label{section3equation1}
[\dholoc{P}{U_j}] = \left\{\begin{array}{lr}
    \cocyclebundle{j}{i}{F}[\dholoc{P}{U_i}]\cocyclebundle{i}{j}{E}=
    (\cocyclebundle{i}{j}{F})^{-1}[\dholoc{P}{U_i}]\cocyclebundle{i}{j}{E},  
    ~\text{if } z_j\circ z_i^{-1} \text{ is holomorphic}\\
       \\
    \cocyclebundle{j}{i}{F}\overline{[\dholoc{P}{U_i}]}\overlinecocyclebundle{i}{j}{E}=
    (\overlinecocyclebundle{i}{j}{F})^{-1}\overline{[\dholoc{P}{U_i}]}\overlinecocyclebundle{i}{j}{E},  
    ~\text{if } z_j\circ z_i^{-1} \text{ is anti-holomorphic}
    \end{array}\right. 
\end{equation}
Note that the above compatibility condition is imposed from the gluing condition \eqref{section2equation1}, which is
described as follows:
Let $(U_i,z_i)$ and $(U_j,z_j)$ be two charts on $X$ and $z_j\circ z_i^{-1}$ is anti-holomorphic. 
Let $s=[\dholoa{U},s_{\dholoa{U}}]\in \Gamma(X,E)$ and $P=[\dholoa{U},P_{\dholoa{U}}]\in Hom_{\twistC}(E,F)$. 
On $U_i\cap U_j$, $[\dholoc{P}{U_i}][\dholoc{s}{U_i}]=
[\dholoc{P}{U_i}]\cocyclebundle{i}{j}{E}\overline{[\dholoc{s}{U_j}]}$. Also 
$[\dholoc{P}{U_j}][\dholoc{s}{U_j}]=\cocyclebundle{j}{i}{F}\overline{[\dholoc{P}{U_i}]}\overline{[\dholoc{s}{U_i}]}
=\cocyclebundle{j}{i}{F}\overline{[\dholoc{P}{U_i}]}\overlinecocyclebundle{i}{j}{E}[\dholoc{s}{U_j}]$.
Similary, if $z_j\circ z_i^{-1}$ is holomorphic then we  will get, 
$[\dholoc{P}{U_j}]=\cocyclebundle{j}{i}{F}[\dholoc{P}{U_i}]\cocyclebundle{i}{j}{E}$.

The presheaf $\bigl(U\ra Hom_{\twistC}(E|_U,F|_U)\bigl)$ forms a sheaf, which will be denoted by $\mathcal{H}om_{\twistC}(E,F)$.

\begin{remark}\rm{
The above-described sheaf $\mathcal{H}om_{\twistC}(E,F)$ has both left as well as right 
$\dhstruct{X}$-module structure.
Let $[\dholoa{U}\cap U,f_{\dholoa{U}\cap U}]\in \dhstruct{X}(U)$ and 
$[\dholoa{U}\cap U,P_{\dholoa{U}\cap U}]\in \mathcal{H}om_{\twistC}(E,F)(U)$ for some open set $U\in X$, 
then $[\dholoa{U}\cap U,f_{\dholoa{U}\cap U}].[\dholoa{U}\cap U,P_{\dholoa{U}\cap U}]=
[\dholoa{U}\cap U,(f.P)_{\dholoa{U}\cap U}]$. Similarly, 
$[\dholoa{U}\cap U,P_{\dholoa{U}\cap U}].[\dholoa{U}\cap U,f_{\dholoa{U}\cap U}]=
[\dholoa{U}\cap U,(P.f)_{\dholoa{U}\cap U}]$.
}
\end{remark}
  
Using both $\dhstruct{X}$-module structure and for a given open subset $U\in X$, we can define the 
Lie bracket $[P,f]\in \mathcal{H}om_{\twistC}(E,F)(U)$ as follows
$$
[P,f]=Pf-fP
$$
for $P\in \mathcal{H}om_{\twistC}(E,F)(U)$ and $f\in \dhstruct{X}(U)$.

We will denote by $\mathcal{H}om_{\dhstruct{X}}(E,F)$, as sheaf of $\dhstruct{X}$-linear morphism 
between $d$-holomorphic bundles $E$ and $F$, which is also $\dhstruct{X}$-submodule of 
$\mathcal{H}om_{\twistC}(E,F)$. Also note that an element $P\in \mathcal{H}om_{\twistC}(E,F)(U)$ 
is $\dhstruct{X}(U)$-linear if $[P,f]=0$, where $U$ is an open subset of $X$ and $f\in \dhstruct{X}(U)$.

For a given open subset $U\subset X$ with  $V\subset U$ open, the Lie bracket commutes with restrictions 
i.e. $\restr{[P,f]}{V}=[\restr{P}{V},\restr{f}{V}]$, where $P\in \mathcal{H}om_{\twistC}(E,F)(U)$ and 
$f\in \dhstruct{X}(U)$.

\begin{definition}\label{s1d2}\rm{
For given two $d$-holomorphic vector bundles $E$ and $F$ over $(X,\dholos{X})$, a first-order 
differential $d$-operator is an element $P\in Hom_{\twistC}(E,F)$ such that the Lie bracket 
$[P|_U,f]:\restr{E}{U}\ra \restr{F}{U}$ is $\dhstruct{X}(U)$ linear for any open subset $U$ of $X$ and 
$f\in \dhstruct{X}(U)$.
}
\end{definition}

An element $P\in \mathcal{H}om_{\twistC}(E,F)$  is a first-order differential $d$-operator if and only if 
$[[P,f],g]\equiv 0$ for $f,g\in \dhstruct{X}$. Every $\dhstruct{X}(U)$-linear sheaf morphism 
$P:E|_U\ra F|_U$ satisfies $[P,f]\equiv 0$, for $f\in \dhstruct{X}(U)$, therefore $P$ is a 
first-order differential $d$-operator. Let $\mathcal{D}_1(\restr{E}{U},\restr{F}{U})$ be a subset of 
$Hom_{\twistC}(\restr{E}{U},\restr{F}{U})$ whose members are first-order differential $d$-operators. 
More generally, for every open subset $U$ of $X$ define,
$$
\mathcal{D}_1(E,F)(U)=\mathrm{Diff}_1(E|_U,f|_U)
$$
Then, $\mathcal{D}_1(E,F)(U)$ is an $\dhstruct{X}(U)$-submodule of $\mathcal{H}om_{\twistC}(E,F)(U)$.

If $V\subset U$, and if $P\in \mathcal{H}om_{\twistC}(E,F)(U)$ belongs to $\mathcal{D}_1(E,F)(U)$, 
then $P|_V$ will also belong to $\mathcal{D}_1(E,F)(V)$. 

Let $\{V_j\}_{j\in J}$ be a family of open set in $X$, and $U={\displaystyle\bigcup_jV_j}$. 
If $P\in \mathcal{H}om_{\twistC}(E,F)(U)$ such that $P|_{V_j}\in \mathcal{D}_1(E,F)(V_j)$ for all 
$j\in J$, then $P$ belongs to $\mathcal{D}_1(E,F)(U)$ which is true using the fact that Lie bracket 
commutes with restriction. Hence, $\mathcal{D}_1(E,F)$ is a subsheaf of $\mathcal{H}om_{\twistC}(E,F)$, 
which will be denoted by $\mathcal{D}_1(E,F)$ and $\mathcal{D}_1(E,F)(U)=\mathrm{Diff}_1(\restr{E}{U},\restr{F}{U})$.

The proof of the following result follows on the same line of arguments as in the proof of \cite[Proposition 2.6]{IBNR}.
\begin{proposition}\label{section3proposition10}
Let $P\in \mathcal{H}om_{\twistC}(E,F)$. Then, the following statements are equivalent:
\begin{enumerate}
\item $P$ is a first-order differential $d$-operator between $d$-holomorphic bundles $E$ and $F$ of ranks $r_1$ and $r_2$, respectively.
\item The first order differential $d$-operator is locally $\C$ linear. For every point $x\in X$, we have $P_x(\mathfrak{m}_x^2E_x)\subset \mathfrak{m}_xF_x$, where $\mathfrak{m}_x$ denotes the maximal ideal in $\dhstruct{X,x}$, $E_x$ denotes the stalk of sheaf associated to $E$ at $x$ and $P_x:E_x\ra F_x$ is the $\R$-linear map between stalks $E_x$ and $F_x$ induced by $P$.
\item \label{p3} Let $(U_i,z_i)$ be a chart around $x\in X$, let $s=\{s_1,s_2,\dots s_{r_1}\}$ and $t=\{t_1,t_2,\dots t_{r_2}\}$ be frames for $E|_{U_i}$ and $F|_{U_i}$, respectively. Then there exists $d$-holomorphic functions $(\dholoc{a}{U_i})^{i'}_l,(\dholoc{b}{U_i})^{i'}_l\in \dhstruct{X}(U_i)$ $(1\leq i'\leq r_1, 1\leq l\leq r_2)$ such that,
\begin{equation}\label{s1eq1}
\dholoc{P}{U_i}(\displaystyle\sum_{l=1}^{r_1} \dholoc{f}{U_i l}s_{l})=
\displaystyle\sum_{l=1}^{r_1}\sum_{i'=1}^{r_2} (\dholoc{a}{U_i})^{i'}_l\dholoc{f}{U_i l}t_{i'}+
\sum_{l=1}^{r_1}\sum_{i'=1}^{r_2} (\dholoc{b}{U_i})^{i'}_l\frac{\partial \dholoc{f}{U_i l}}{\partial {z_i}}t_{i'}
\end{equation}
where $\dholoc{f}{U_i l}\in \dhstruct{X}(U_j)$ $(1\leq l\leq r_1)$.
\item For every point $x\in X$, there exists a chart $(U_i,z_i)$, frames $s=\{s_1,s_2,\dots s_{r_1}\}$ and $t=\{t_1,t_2,\dots t_{r_2}\}$ of $E|_{U_i}$ and $F|_{U_i}$ respectively and $d$-holomorphic functions $(\dholoc{a}{U_i})^{i'}_l,(\dholoc{b}{U_i})^{i'}_l\in \dhstruct{X}(U_i)$ such that equation \eqref{s1eq1} holds for $\dholoc{f}{U_i l}\in \dhstruct{X}(U_i)$ $(1\leq l\leq {r_1})$.
\end{enumerate}
\end{proposition}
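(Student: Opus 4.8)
\emph{Plan of proof.} The plan is to establish the cyclic chain $(1)\Rightarrow(2)\Rightarrow(3)\Rightarrow(4)\Rightarrow(1)$, transcribing to the $d$-holomorphic framework the argument of \cite[Proposition 2.6]{IBNR}. Note at the outset that an element of $\mathcal{H}om_{\twistC}(E,F)$ is by definition $\C$-linear chart by chart, so the ``locally $\C$-linear'' clause in (2) is free; in particular, on each chart $P$ sends $d$-holomorphic sections of $E$ to $d$-holomorphic sections of $F$ (it is only the $\dhstruct{X}$-linearity that fails).

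\emph{The three easy implications.} For $(1)\Rightarrow(2)$ I would localise: fixing $x$ and writing a germ in $\mathfrak{m}_x^2E_x$ as a finite sum $\sum_\alpha f_\alpha g_\alpha s_\alpha$ with $f_\alpha,g_\alpha\in\mathfrak{m}_x$, expanding the hypothesis $[[P,f_\alpha],g_\alpha]\equiv 0$ gives the germ identity $P(f_\alpha g_\alpha s_\alpha)=f_\alpha P(g_\alpha s_\alpha)+g_\alpha P(f_\alpha s_\alpha)-f_\alpha g_\alpha P(s_\alpha)$, and since each term on the right carries a factor from $\mathfrak{m}_x$ we get $P_x(\mathfrak{m}_x^2E_x)\subset\mathfrak{m}_xF_x$. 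The implication $(3)\Rightarrow(4)$ is immediate, as (4) is just (3) read off for one arbitrary chart around each point. For $(4)\Rightarrow(1)$ one uses that $[[P,f],g]\equiv 0$ is a local statement: in the chart and frames provided by (4), $P$ is the sum of a matrix-multiplication operator and the operator $(\dholoc{b}{U_i})\,\partial/\partial z_i$; the bracket of either summand with a $d$-holomorphic function is again a multiplication operator, and multiplication operators commute, whence $[[P,f],g]\equiv 0$.

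\emph{The main step $(2)\Rightarrow(3)$.} Fix a chart $(U_i,z_i)$ around $x$ and frames $s=\{s_l\}$, $t=\{t_{i'}\}$. Since $P(s_l)$ and $P\bigl((z_i-z_i(x))s_l\bigr)$ are $d$-holomorphic sections of $F$ over $U_i$, writing them in the frame $t$ produces $d$-holomorphic functions $(\dholoc{a}{U_i})^{i'}_l$ (from $P(s_l)=\sum_{i'}(\dholoc{a}{U_i})^{i'}_lt_{i'}$) and then $(\dholoc{b}{U_i})^{i'}_l$, determined by $P\bigl((z_i-z_i(x))s_l\bigr)=\sum_{i'}\bigl((\dholoc{a}{U_i})^{i'}_l(z_i-z_i(x))+(\dholoc{b}{U_i})^{i'}_l\bigr)t_{i'}$. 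Let $Q$ be the operator on $U_i$ defined by the right-hand side of \eqref{s1eq1} with these coefficients and set $R:=P-Q$. By construction $R$ is $\C$-linear with $R(s_l)=0=R\bigl((z_i-z_i(x))s_l\bigr)$, and since $Q$ manifestly satisfies (2) at every point of $U_i$, so does $R$. Now for a section $\eta=\sum_l\dholoc{f}{U_i l}s_l$ and any $y\in U_i$, the one-variable Taylor identity $\dholoc{f}{U_i l}=\dholoc{f}{U_i l}(y)+\frac{\partial\dholoc{f}{U_i l}}{\partial z_i}(y)\,(z_i-z_i(y))+(z_i-z_i(y))^2g_l$, with $g_l$ $d$-holomorphic on $U_i$, together with $R(s_l)=0$ and $R\bigl((z_i-z_i(y))s_l\bigr)=R\bigl((z_i-z_i(x))s_l\bigr)+\bigl(z_i(x)-z_i(y)\bigr)R(s_l)=0$, reduces $R(\eta)$ to $R\bigl(\sum_l(z_i-z_i(y))^2g_ls_l\bigr)$, whose germ at $y$ lies in $\mathfrak{m}_yF_y$ by property (2) for $R$; hence $R(\eta)(y)=0$. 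As $y\in U_i$ was arbitrary, $R\equiv 0$ on $U_i$, i.e.\ $P$ is given there by \eqref{s1eq1}.

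\emph{Main obstacle.} The delicate point, which is essentially the whole content of $(2)\Rightarrow(3)$, is to arrange that $R$ vanishes identically on the chart and not merely at the distinguished point $x$: this is why the candidate coefficients $(\dholoc{a}{U_i})$ and $(\dholoc{b}{U_i})$ (hence $Q$ and $R$) are defined on all of $U_i$, why the Taylor expansion is performed at an arbitrary $y$, and why the elementary identity relating $(z_i-z_i(y))s_l$ to $(z_i-z_i(x))s_l$ modulo $\C\cdot s_l$ is invoked. The remaining bookkeeping is a routine transcription of the classical Riemann-surface argument, the only modifications being that scalars are taken in $\C$ chart by chart and that $d$-holomorphic functions replace holomorphic ones throughout.
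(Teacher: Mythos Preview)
Your proof is correct and follows exactly the approach the paper intends: the paper does not spell out an argument but simply refers to \cite[Proposition 2.6]{IBNR}, and what you have written is a faithful transcription of that classical cyclic proof $(1)\Rightarrow(2)\Rightarrow(3)\Rightarrow(4)\Rightarrow(1)$ to the $d$-holomorphic setting. The handling of the main step $(2)\Rightarrow(3)$---defining the coefficients $(a),(b)$ globally on the chart from $P(s_l)$ and $P\bigl((z_i-z_i(x))s_l\bigr)$, then killing the remainder $R=P-Q$ at an arbitrary $y$ via Taylor expansion and the relation $(z_i-z_i(y))s_l=(z_i-z_i(x))s_l+(z_i(x)-z_i(y))s_l$---is precisely the argument in \cite{IBNR}, and your observation that $Q$ manifestly satisfies (2) (so that $R$ inherits it) is the right way to close the loop without circularity.
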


\subsection*{Symbol exact sequence}
We will now define the symbol map associated to a first order differential $d$-operator.

\begin{proposition}\label{section3proposition11}
Let $P:E\ra F$ be first order differential $d$-operator, then there exist a unique 
$\dhstruct{X}$-module homomorphism, $\sigma_1(P):\dhform{d}\ra \mathcal{H}om_{\dhstruct{X}}(E,F)$ 
such that for any open subset $V\subset X$,
\begin{equation}\label{section3equation3}
\sigma_1(P)_{V}(df)=[P_{V},f]:\restr{E}{V}\ra \restr{F}{V}\text{ for }f\in \dhstruct{X}(V)
\end{equation} 
\end{proposition}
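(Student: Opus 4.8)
The plan is to construct $\sigma_1(P)$ locally on each chart using the explicit description given by Proposition \ref{section3proposition10}\eqref{p3}, check that these local pieces patch to a global $\dhstruct{X}$-module homomorphism $\dhform{d}\to \mathcal{H}om_{\dhstruct{X}}(E,F)$, and then verify the defining property \eqref{section3equation3} together with uniqueness. Concretely, I would first fix an open $V\subset X$ and $f\in\dhstruct{X}(V)$, and observe that since $P$ is a first-order differential $d$-operator, the assignment $f\mapsto [P_V,f]$ lands in $\mathcal{H}om_{\twistC}(E,F)(V)$ and in fact, by the equivalence in Proposition \ref{section3proposition10}, is $\dhstruct{X}(V)$-linear as a morphism $\restr{E}{V}\to\restr{F}{V}$, i.e.\ it lies in $\mathcal{H}om_{\dhstruct{X}}(E,F)(V)$: this is exactly the condition $[[P,f],g]\equiv 0$ for $g\in\dhstruct{X}$. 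So the map $f\mapsto[P_V,f]$ already takes values in the right sheaf.

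The next step is to show this factors through $d\colon\dhstruct{X}\to\dhform{d}$. I would work on a chart $(U_i,z_i)$ with frames $s,t$ for $E,F$ and use \eqref{s1eq1}: for $f\in\dhstruct{X}(U_i)$ one computes $[P,f](\sum \dholoc{g}{l}s_l)=\sum (\dholoc{b}{U_i})^{i'}_l\,\dfrac{\partial f}{\partial z_i}\,\dholoc{g}{l}\,t_{i'}$, the $a$-term and the $g$-derivative term cancelling because $[P,f]$ is $\dhstruct{X}$-linear. Hence $[P,f]$ depends on $f$ only through $\dfrac{\partial f}{\partial z_i}$, equivalently through $df = \dfrac{\partial f}{\partial z_i}\,dz_i\in\dhform{d}(U_i)$ (recall $d$-holomorphic functions are $\overline{\partial}$-closed, so there is no $d\overline z_i$ part). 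Since any local $d$-holomorphic $1$-form is locally of the shape $h\,dz_i$ with $h\in\dhstruct{X}(U_i)$, and $h\,dz_i = d\widetilde{h}$ for a local primitive $\widetilde h$, this lets me define $\sigma_1(P)_{U_i}(h\,dz_i)$ to be the $\dhstruct{X}(U_i)$-linear morphism $\sum\dholoc{g}{l}s_l\mapsto \sum (\dholoc{b}{U_i})^{i'}_l\,h\,\dholoc{g}{l}\,t_{i'}$, and \eqref{section3equation3} then holds on $U_i$ by construction. One checks $\dhstruct{X}(U_i)$-linearity of $\sigma_1(P)_{U_i}$ in its $\dhform{d}$-argument directly from this formula.

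Then I would patch: on $U_i\cap U_j$ the two local definitions agree because they are both characterized by \eqref{section3equation3} on the (dense enough / generating) elements $df$, and $[P,f]$ is intrinsically defined independent of frames and charts; alternatively one verifies directly that the matrices $(\dholoc{b}{U_i})^{i'}_l$ transform under \eqref{section3equation1} and the cocycles $\cocyclecotangent{j}{i}{X}$, $\cocyclebundle{i}{j}{E}$, $\cocyclebundle{i}{j}{F}$ exactly so as to make $\sigma_1(P)_{U_i}$ and $\sigma_1(P)_{U_j}$ restrict to the same element of $\mathcal{H}om_{\dhstruct{X}}(E,F)(U_i\cap U_j)$, remembering to insert conjugations on components where $z_j\circ z_i^{-1}$ is anti-holomorphic. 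This produces a global $\dhstruct{X}$-module homomorphism $\sigma_1(P)\colon\dhform{d}\to\mathcal{H}om_{\dhstruct{X}}(E,F)$. Finally, uniqueness: if $\sigma,\sigma'$ both satisfy \eqref{section3equation3}, then $\sigma-\sigma'$ vanishes on every section of the form $df$; since $\dhform{d}$ is locally generated over $\dhstruct{X}$ by such $df$ (locally $dz_i = d z_i$ itself works), $\dhstruct{X}$-linearity forces $\sigma=\sigma'$.

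I expect the main obstacle to be the patching/well-definedness step in the anti-holomorphic case: one must track carefully how the "symbol matrix" $b$ from \eqref{s1eq1} behaves under the transition formula \eqref{section3equation1} for $P$ together with the transition of $dz_i$ versus $d\overline z_i$ recorded in the cotangent cocycle, and check the conjugations line up; the holomorphic overlaps are routine. A minor subtlety is to make sure the local primitive argument (writing a $d$-holomorphic $1$-form as $df$) is only used locally on simply connected charts, so that no global cohomological obstruction enters — this is fine since the claim about $\sigma_1(P)$ is a statement about a sheaf morphism and can be checked on a basis of such charts.
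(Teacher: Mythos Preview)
Your proposal is correct and follows essentially the same approach as the paper: local construction via the explicit description in Proposition~\ref{section3proposition10}\eqref{p3}, patching across charts, and uniqueness from the fact that $\dhform{d}|_{U_i}$ is generated by $dz_i$. The one place where the paper is more explicit than your sketch is the anti-holomorphic patching step: rather than appealing to the intrinsic nature of $[P,f]$, the paper writes the Taylor expansion of $z_j$ in terms of $\overline{z}_i$ on $U_i\cap U_j$, applies $P$, and uses Proposition~\ref{section3proposition10} to obtain the precise compatibility condition $[\sigma_1(P)_{U_j}(dz_j)] = \frac{\partial z_j}{\partial \overline{z}_i}\,\cocyclebundle{j}{i}{F}\,\overline{[\sigma_1(P)_{U_i}(dz_i)]}\,\overlinecocyclebundle{i}{j}{E}$ (and the analogous holomorphic version), which is exactly the transformation law you anticipated having to check; your alternative route via the intrinsic definition of $[P,f]$ is also valid and slightly more conceptual.
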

\begin{proof} 
Note that $\dhform{d}|_{U_i}$ is generated by $dz_i$, where $(U_i,z_i)$ is some chart on $X$. 
Hence, equation \eqref{section3equation3} determines $\sigma_1(P)$ uniquely. Also, the local existence 
of $\sigma_1(P)$ on chart $(U_i,z_i)$, satisfying equation \eqref{section3equation3} can be shown 
following the same line of arguements as in \cite[Proposition 2.7]{IBNR} using Proposition 
\ref{section3proposition10}, which is given as
\begin{equation}\label{section3equation4}
\dholoc{\sigma_1(P)}{U_i}(\dholoc{df}{U_i})=
[\dholoc{P}{U_i},\dholoc{f}{U_i}]:\restr{E}{U_i}\ra 
\restr{F}{U_i}\text{ for }\dholoc{f}{U_i}\in \dhstruct{X}(U_i)
\end{equation}
Note that corresponding to a given $d$-holomorphic atlas $\dholoa{U}=\{(U_i,z_i)\}_{i\in I}$, 
we have a family $\{\dholoc{\sigma_1(P)}{U_i}(dz_i)\}_{i\in I}$ satisfying the following 
compatibility condition
$$
[\sigma_1(P)_{U_j}(dz_j)] = \left\{\begin{array}{lr}
  \frac{\partial z_j}{\partial z_i}(x)\cocyclebundle{j}{i}{F}[\sigma_1(P)_{U_i}(dz_i)]
  \cocyclebundle{i}{j}{E},\hspace{5pt}  ~\text{if } z_j\circ z_i^{-1} \text{ is holomorphic}\\
        \\
  \frac{\partial z_j}{\partial \overline{z}_i}(x)\cocyclebundle{j}{i}{F}\overline{[\sigma_1(P)_{U_i}(dz_i)]}  
  \overlinecocyclebundle{i}{j}{E}, \hspace{5pt} ~ \text{if } z_j\circ z_i^{-1} \text{ is anti-holomorphic}
  \end{array}\right. 
$$
or
\begin{equation}\label{section3equation5}
[\sigma_1(P)_{U_j}(dz_j)] = \left\{\begin{array}{lr}
  \frac{\partial z_j}{\partial z_i}(x)(\cocyclebundle{i}{j}{F})^{-1}[\sigma_1(P)_{U_i}(dz_i)]
  \cocyclebundle{i}{j}{E}, \hspace{5pt}  ~ \text{if } z_j\circ z_i^{-1} \text{ is holomorphic}\\
        \\
  \frac{\partial z_j}{\partial \overline{z}_i}(x)(\overlinecocyclebundle{i}{j}{F})^{-1}
  \overline{[\sigma_1(P)_{U_i}(dz_i)]}\overlinecocyclebundle{i}{j}{E}, \hspace{5pt} 
  ~\text{if } z_j\circ z_i^{-1} \text{ is anti-holomorphic}
        \end{array}\right. 
\end{equation}
where $[\dholoc{\sigma_1(P)}{U_j}(dz_j)]$ is used to represent matrix associated to 
$\dhstruct{X}$-linear morphism $\dholoc{\sigma_1(P)}{U_j}(dz_j)$ and the description of compatibility 
condition is below.

Let $(U_i,z_i)$ and $(U_j,z_j)$ be two charts on $X$ and $x\in U_i\cap U_j$. First we will consider 
that the transition map $z_j\circ z_i^{-1}$ is anti-holomorphic, let $V$ be a neighbourhood of $x$ 
such that $V\subset U_i\cap U_j$. Using Taylor's series expansion for transition function 
$z_j\circ z_i^{-1}$ on $V$, we have
\[
\begin{array}{ll}
z_j&=z_j(x)+\frac{\partial(z_j)}{\partial \overline{z}_i}(x)\bigl(\overline{z}_i-\overline{z}_i(x)\bigl)+\dots \\ 
\end{array}
\]
Hence,
\[
\begin{array}{ll}
\dholoc{P}{U_j}\bigl(z_j-z_j(x)\bigl)&=\frac{\partial(z_j)}{\partial \overline{z}_i}(x)\cocyclebundle{j}{i}{F}
\overline{[\dholoc{P}{U_i}]}\{\bigl(\overline{z}_i-\overline{z}_i(x)\bigl)+\dots\}\overlinecocyclebundle{i}{j}{E}\\ \\
&=\frac{\partial(z_j)}{\partial \overline{z}_i}(x)\cocyclebundle{j}{i}{F}\overline{[[\dholoc{P}{U_i},z_i]]}
\overlinecocyclebundle{i}{j}{E}\hspace{5pt}\text{ using Proposition }\ref{section3proposition10}\\ 
\end{array}
\]
i.e.,
\[
\begin{array}{ll}
[\dholoc{\sigma_1(P)}{U_j}(dz_j)]&=\frac{\partial(z_j)}{\partial \overline{z}_i}(x)\cocyclebundle{j}{i}{F}
\overline{[\dholoc{\sigma_1(P)}{U_i}(dz_i)]}\overlinecocyclebundle{i}{j}{E}\hspace{5pt}\text{ using equation }
\eqref{section3equation4}
\end{array}
\]
Similarly, if tranisition map $z_j\circ z_i^{-1}$ is holomorphic, then we have
$$
[\dholoc{\sigma_1(P)}{U_j}(dz_j)]=
\frac{\partial(z_j)}{\partial z_i}(x)\cocyclebundle{j}{i}{F}[\dholoc{\sigma_1(P)}{U_i}(dz_i)]
\cocyclebundle{i}{j}{E}
$$
where $[[\dholoc{P}{U_i},z_i]]$ is used to denote matrix associated to $\dhstruct{X}$-linear morphism 
$[\dholoc{P}{U_i},z_i]$ between $E|_{U_i}$ and $F|_{U_i}$.

Now, let $V$ be an arbitrary open subset of $X$ then we have a family 
$\{\dholoc{\sigma_1(P)}{U_i\cap V}(\dholoc{df}{U_i\cap V})\}_{i\in I}$ for a given 
$f=[\dholoa{U}\cap V,f_{\dholoa{U}\cap V}]\in \dhstruct{X}(V)$ with the following compatibility 
condition which can described using compatibility condition \eqref{section3equation5}
$$
[\dholoc{\sigma_1(P)}{U_j\cap V}(\dholoc{df}{U_j\cap V})] = \left\{\begin{array}{lr}
  \cocyclebundle{j}{i}{F}[\sigma_1(P)_{U_i\cap V}(\dholoc{df}{U_i\cap V})]
  \cocyclebundle{i}{j}{E},\hspace{5pt}  ~\text{if } z_j\circ z_i^{-1} \text{ is holomorphic}\\
        \\
  \cocyclebundle{j}{i}{F}\overline{[\sigma_1(P)_{U_i\cap V}(\dholoc{df}{U_j\cap V})]}
  \overlinecocyclebundle{i}{j}{E}, \hspace{5pt} ~\text{if } z_j\circ z_i^{-1} \text{ is anti-holomorphic}
  \end{array}\right. 
$$
or,
$$
[\sigma_1(P)_{U_j\cap V}(\dholoc{df}{U_j\cap V})] = \left\{\begin{array}{lr}
       (\cocyclebundle{i}{j}{F})^{-1}[\sigma_1(P)_{U_i\cap V}(\dholoc{df}{U_i\cap V})]
       \cocyclebundle{i}{j}{E}, \hspace{5pt}  \text{if } z_j\circ z_i^{-1} \text{ is holomorphic}\\
        \\
        (\overlinecocyclebundle{i}{j}{F})^{-1}\overline{[\sigma_1(P)_{U_i\cap V}(\dholoc{df}{U_i\cap V})]}
        \overlinecocyclebundle{i}{j}{E}, \hspace{5pt} \text{if } z_j\circ z_i^{-1} \text{ is anti-holomorphic}
        \end{array}\right. 
$$
Hence, the the equivalence class of family $\{\dholoc{\sigma_1(P)}{U_i\cap V}(\dholoc{df}{U_i\cap V})\}_{i\in I}$, 
which will be denoted by $\dholoc{\sigma_1(P)}{V}$ belongs to $\mathcal{H}om_{\twistC}(E,F)(V)$ and 
$$
\sigma_1(P)_V(df)=\{\dholoc{\sigma_1(P)}{U_i\cap V}(\dholoc{f}{U_i\cap V}\}_{i\in I}=
\{[\dholoc{P}{U_i\cap V},\dholoc{f}{U_i\cap V}]\}_{i\in I}=[P|_V,f]
$$
for $P\in \mathcal{D}_1(E,F)$ and $f\in \dhstruct{X}(V)$.
\end{proof}
  
\begin{definition}\label{s1d3}\rm{
For a given first order differential $d$-operator $P:E\ra F$, the unique $\dhstruct{X}$-module 
homomorphism $\sigma_1(P)$ defined in Proposition \ref{section3proposition11} along with the 
compatibility condition \eqref{section3equation5} is called the symbol of first order differential 
$d$-operator $P$.
}
\end{definition}

\begin{remark}\rm{
Let $E$ and $F$ be two $d$-holomorphic bundles or rank $r_1$ and $r_2$ respectively. Locally, 
on a chart $(U_i,z_i)$ on $X$, let bundle $E|_{U_i}$ and $F|_{U_i}$ have frames 
$\{s_{il}\}_{1\leq l\leq r_1}$ and $\{t_{il}\}_{1\leq l\leq r_2}$ then local description of 
symbol map can be given as follows. 

For any neighbourhood $V_i\subset U_i$ around some point $x\in X$, let 
$\dholoc{\omega}{U_i\cap V_i}\in \dhform{d}(V_i)$ then there exists 
$\dholoc{\xi}{U_i\cap V_i}:V_i\ra \C$, a holomorphic function such that 
$\dholoc{\omega}{U_i\cap V_i}=\dholoc{\xi}{U_i\cap V_i}dz_i$, and
\[
\begin{array}{ll}
\dholoc{\sigma_1(P)}{U_i}(\dholoc{\omega}{U_i\cap V_i})(s_{il})
&= \dholoc{\xi}{U_i\cap V_i}\dholoc{\sigma_1(P)}{U_i}(dz_i)(s_{il})\\ \\
&=\dholoc{\xi}{U_i\cap V_i}[\dholoc{P}{U_i}|_{V_i},z_i|_{V_i}](s_{il})\\ \\

& =\dholoc{\xi}{U_i\cap V_i}\displaystyle\sum_{m=1}^{r_2}t_{im}(\dholoc{b}{U_i})^m_l 
\end{array}\]
where $[\dholoc{P}{U_i}|_{V_i},z_i|_{V_i}]\in \mathcal{H}om_{\twistC}(E_{V_i},F_{V_i})$ and 
$((\dholoc{b}{U_i})^m_l)$ $(1\leq m\leq r_1, 1\leq l\leq r_2)$ is matrix associated to morphism 
$[\dholoc{P}{U_i}|_{V_i},z_i|_{V_i}]\in \mathcal{H}om_{\twistC}(E_{V_i},F_{V_i})$, with respect to frames 
$\{s_{il}\}_{1\leq l\leq r_1}$ and $\{t_{im}\}_{1\leq m\leq r_2}$ of $E_{U_i}$ and $F_{U_i}$, 
respectively.

In other words, each entry of the matrix of $\dhstruct{X}(U_i)$-module homomorphism 
$\dholoc{\sigma_1(P)}{U_i}$ with respect to $d$-holomorphic frames $\{s_i\}_{1\leq i\leq r_1}$ 
and $\{t_i\}_{1\leq i\leq r_2}$ for $E|_{U_i}$ and $F|_{U_i}$, respectively is holomorphic 
along with the compatibility conditions \eqref{section3equation4}. Hence symbol map is $d$-holomorphic. 
}
\end{remark}
For a given differential $d$-operator $P\in \mathrm{Diff}_1(E,F)$, symbol map $\sigma_1(P)$ can be considered 
as a global section of bundle $T_dX\otimes_{\dhstruct{X}}\mathcal{H}om_{\dhstruct{X}}(E,F)$ using 
the following sheaf isomorphism
$$
\mathcal{H}om_{\dhstruct{X}}\bigl(\dhform{d}, \mathcal{H}om(E,F)\bigl)\simeq 
T_dX\times \mathcal{H}om_{\dhstruct{X}}(E,F)
$$
In other words, we have a map
$$
\sigma_1:\mathcal{D}_1(E,F)\ra T_dX\otimes_{\dhstruct{X}}\mathcal{H}om_{\dhstruct{X}}(E,F)
$$
which is surjective. Hence, we get the following symbol exact sequence
\begin{equation}\label{section3equation6}
0\ra \mathcal{H}om_{\dhstruct{X}}(E,F)\xra{i}\mathcal{D}_1(E,F)\xra{\sigma_1} 
T_dX\otimes_{\dhstruct{X}}\mathcal{H}om_{\dhstruct{X}}(E,F)\ra 0.
\end{equation}

\subsection{$d$-holomorphic connections}
Let $E$ be a $d$-holomorphic vector bundle $E$ over a Klein surface $(X,\dholos{X})$. There is a 
canonical $\dhstruct{X}$-module monomorphism 
$i:T_dX\ra T_dX\otimes_{\dhstruct{X}}\mathcal{E}nd_{\dhstruct{X}}(E)$ given by 
$i(X)=X\otimes_{\dhstruct{X}}\textbf{1}_E$. Hence $T_dX(U)$ can be identified as an 
$\dhstruct{X}(U)$-submodule of $T_dX\otimes_{\dhstruct{X}}\mathcal{E}nd_{\dhstruct{X}}(E)(U)$. 
Also, we have the following decomposition 
\begin{equation}\label{section4equation1}
T_dX\otimes_{\dhstruct{X}}\mathcal{E}nd_{\dhstruct{X}}(E)=
\bigl(T_dX\oplus T_dX\otimes_{\dhstruct{X}}\mathcal{E}nd^0_{\dhstruct{X}}(E)\bigl)
\end{equation}
where $\mathcal{E}nd^0_{\dhstruct{X}}(E)$ is the kernel of the following trace map
$$
Tr:\mathcal{E}nd_{\dhstruct{X}}(E)\ra \dhstruct{X}
$$
Recall that the symbol map associated with a given first-order differential $d$-operator 
$P$, $\sigma_1(P)$ is a global section of $T_dX\otimes_{\dhstruct{X}}\mathcal{E}nd_{\dhstruct{X}}(E)$, 
but we are interested in those first-order differential $d$-operators whose symbol map is the global 
section of $T_dX$, considering $T_dX$ as an $\dhstruct{X}$-submodule of $T_dX\otimes_{\dhstruct{X}}\mathcal{E}nd_{\dhstruct{X}}(E)$ under the canonical monomorphism discussed above.

\begin{definition} \label{section4definition1}\rm{
For a given $d$-holomorphic vector bundle $E$ over Klein surface $(X,\dholos{X})$, a derivative 
endomorphism of $E$ is a first-order differential $d$-operator $P\in \mathcal{D}_1(E)$, whose 
symbol $\sigma_1(P)$ is a global section of $T_dX$.
}
\end{definition}

\begin{proposition}\label{section4proposition2}
A first-order differential $d$-operator $P\in \mathrm{Diff}_1(E)$ is a derivative endomorphism if and only if 
for every open subset $U$ of $X$, there exists a $d$-holomorphic vector field $Y\in T_dX(U)$ 
such that

\begin{equation}\label{DE}
P_U(fs)=fP_U(s)+(Yf)s
\end{equation}

for $f\in \dhstruct{X}(U)$ and $s\in E(U)$ and, the symbol morphism on $U$ is given by
\[\sigma_1(P)_U(\omega)=\langle Y,\omega\rangle 1_E\]
where $\omega\in \dhform{d}(U)$ and $\langle,\rangle:T_dX(U)\otimes \dhform{d}(U)\ra \dhstruct{X}(U)$ is canonical pairing.
\end{proposition}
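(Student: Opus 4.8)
The plan is to prove the biconditional in the standard way, treating the two implications separately and reducing everything to the local description of first-order differential $d$-operators obtained in Proposition \ref{section3proposition10}(\ref{p3}) together with the defining property of the symbol from Proposition \ref{section3proposition11}.

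\medskip

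\textbf{Sketch of proof.}

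First suppose $P\in \mathrm{Diff}_1(E)$ is a derivative endomorphism, i.e.\ $\sigma_1(P)$ is a global section of $T_dX\subset T_dX\otimes_{\dhstruct{X}}\mathcal{E}nd_{\dhstruct{X}}(E)$ under the canonical monomorphism $i(Y)=Y\otimes \id{E}$ of \eqref{section4equation1}. Fix an open subset $U\subset X$ and write $Y:=\sigma_1(P)_U\in T_dX(U)$, viewed as a $d$-holomorphic vector field through this identification. For $f\in\dhstruct{X}(U)$ and $s\in E(U)$, use the defining equation \eqref{section3equation3} of the symbol: $\sigma_1(P)_U(df)=[P_U,f]=P_U\circ f-f\circ P_U$ as $\dhstruct{X}(U)$-linear endomorphisms of $\restr{E}{U}$. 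Applying this to $s$ and using that $\sigma_1(P)_U(df)=\langle Y,df\rangle\id{E}=(Yf)\,\id{E}$ under the identification (where $\langle\,,\,\rangle$ is the canonical pairing $T_dX(U)\otimes\dhform{d}(U)\to\dhstruct{X}(U)$ and $Yf:=\langle Y,df\rangle$), we get $P_U(fs)-fP_U(s)=(Yf)s$, which is exactly \eqref{DE}. The symbol formula $\sigma_1(P)_U(\omega)=\langle Y,\omega\rangle\id{E}$ for a general $\omega\in\dhform{d}(U)$ then follows because, locally on a chart $(U_i,z_i)$, every $\omega$ is of the form $\xi\,dz_i$ for a $d$-holomorphic $\xi$, and $\dhform{d}$ is locally generated by $dz_i$, so $\dhstruct{X}$-linearity of $\sigma_1(P)_U$ reduces the claim to the case $\omega=dz_i=d(z_i)$ already handled; one then checks the local descriptions glue via the cocycle compatibility \eqref{section3equation5}, which matches the transformation of $T_dX$ under $\cocycletangent{j}{i}{E}$.

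Conversely, suppose that for every open $U$ there exists $Y_U\in T_dX(U)$ with $P_U(fs)=fP_U(s)+(Y_Uf)s$ for all $f\in\dhstruct{X}(U)$, $s\in E(U)$. One first observes that the $Y_U$ are compatible under restriction: if $V\subset U$ and $f\in\dhstruct{X}(U)$ with $df$ not identically zero near a point, comparing the two Leibniz-type identities forces $\restr{Y_U}{V}=Y_V$ on the (dense) locus where $df\neq 0$, hence everywhere by continuity/holomorphicity; thus the $Y_U$ patch to a global section, but in fact we only need them locally. Then, working on a chart $(U_i,z_i)$ and taking $f=z_i$, the identity \eqref{DE} shows $[P_{U_i},z_i](s)=P_{U_i}(z_i s)-z_i P_{U_i}(s)=(Y_{U_i}z_i)s$, i.e.\ $[P_{U_i},z_i]$ is multiplication by the scalar function $Y_{U_i}z_i\in\dhstruct{X}(U_i)$ times $\id{E}$. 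By the defining property \eqref{section3equation4} of the symbol, $\sigma_1(P)_{U_i}(dz_i)=[P_{U_i},z_i]=(Y_{U_i}z_i)\,\id{E}$, which is a scalar multiple of the identity; since $dz_i$ generates $\dhform{d}$ over $U_i$ and $\sigma_1(P)_{U_i}$ is $\dhstruct{X}(U_i)$-linear, $\sigma_1(P)_{U_i}$ takes all of $\dhform{d}(U_i)$ into $\dhstruct{X}(U_i)\cdot\id{E}$. Therefore $\sigma_1(P)$, viewed as a global section of $T_dX\otimes_{\dhstruct{X}}\mathcal{E}nd_{\dhstruct{X}}(E)$, lies in the image of the canonical monomorphism $i:T_dX\hookrightarrow T_dX\otimes_{\dhstruct{X}}\mathcal{E}nd_{\dhstruct{X}}(E)$ locally, hence globally, i.e.\ $\sigma_1(P)$ is a section of $T_dX$ and $P$ is a derivative endomorphism. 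The local section of $T_dX$ so obtained is precisely the $Y_{U_i}$, since $\langle Y_{U_i},dz_i\rangle=Y_{U_i}z_i$.

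\medskip

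I expect the only genuinely delicate point to be bookkeeping the identification of $T_dX(U)$ with its image in $T_dX\otimes_{\dhstruct{X}}\mathcal{E}nd_{\dhstruct{X}}(E)(U)$ and verifying that the ``scalar part'' $Y_{U_i}z_i$ transforms under change of chart exactly by the tangent cocycle $\cocycletangent{j}{i}{E}=\partial z_j/\partial z_i$ (resp.\ $\partial z_j/\partial\overline{z}_i$ in the anti-holomorphic case), so that the locally defined vector fields $Y_{U_i}$ really glue; this is a direct consequence of the compatibility condition \eqref{section3equation5} for the symbol together with the chain rule, but it must be written out with the holomorphic/anti-holomorphic case distinction. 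Everything else is a formal manipulation of the Leibniz identity and the definition of the symbol bracket.
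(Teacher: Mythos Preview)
Your proposal is correct and follows essentially the same approach as the paper: both directions are obtained from the defining identity $\sigma_1(P)_U(df)=[P_U,f]$ of Proposition~\ref{section3proposition11}, the forward implication by reading off the Leibniz rule from the fact that the symbol is $Y\otimes\id{E}$, and the converse by evaluating the bracket at $f=z_i$ to see that $\sigma_1(P)$ is scalar-valued. The paper's write-up puts more emphasis on explicitly recording the chart-wise compatibility conditions (the holomorphic/anti-holomorphic case split) that you flag at the end as the only delicate bookkeeping point, but the underlying argument is the same.
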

\begin{proof}
Let $P$ be a derivative endomorphism and $\dholoa{U}=\{(U_i,z_i)\}_{i\in I}$ be dianalytic atlas on $X$ then by using the same line of arguments as in \cite[Proposition 3.2]{IBNR}, we have
$$
\dholoc{P}{U_i\cap U}(\dholoc{f}{U_i\cap U}\dholoc{s}{U_i\cap U})=\dholoc{f}{U_i\cap U}\dholoc{P}{U_i\cap U}(\dholoc{s}{U_i\cap U})+(\dholoc{Y}{U_i\cap U}\dholoc{f}{U_i\cap U})\otimes 1_E (\dholoc{s}{U_i\cap U})
$$ 
\[
\begin{aligned}
\text{where, } f&=[\dholoa{U}\cap U,\dholoc{f}{\dholoa{U}\cap U}]\in \dhstruct{X}(U)\\ P&=[\dholoa{U}\cap U,\dholoc{P}{\dholoa{U}\cap U}]\in \mathcal{E}nd_{\twistC}(E)\\
s&=[\dholoa{U}\cap U,\dholoc{s}{\dholoa{U}\cap U}]\in \Gamma(U,E),\\
\text{ and }Y&=[\dholoa{U}\cap U,\dholoc{Y}{\dholoa{U}\cap U}]\in \Gamma(U,T_dX)
\end{aligned}
\]
Note that, the following compatibility condition holds
$$
\dholoc{f}{U_j\cap U}[\dholoc{P}{U_j\cap U}] = \left\{\begin{array}{lr}
       \dholoc{f}{U_i\cap U}(\cocyclebundle{i}{j}{E})^{-1}[P_{U_i\cap U}]\cocyclebundle{i}{j}{E},\hspace{5pt}  \text{if } z_j\circ z_i^{-1} \text{ is holomorphic}\\
        \\
        \dholoc{\overline{f}}{U_i\cap U}(\overlinecocyclebundle{i}{j}{E})^{-1}\overline{[P_{U_i\cap U}]}\overlinecocyclebundle{i}{j}{E}, \hspace{5pt} \text{if } z_j\circ z_i^{-1} \text{ is anti-holomorphic}
        \end{array}\right. 
$$
and
$$
(\dholoc{Y}{U_j\cap U}\dholoc{f}{U_j\cap U})\otimes 1_{E_{U_j}} = \left\{\begin{array}{lr}
       \frac{\partial z_j}{\partial z_i}(\dholoc{Y}{U_i\cap U}\dholoc{f}{U_i\cap U})\otimes 1_{E_{U_i}},\hspace{5pt}  \text{if } z_j\circ z_i^{-1} \text{ is holomorphic}\\
        \\
        \frac{\partial z_j}{\partial \overline{z}_i}(\dholoc{\overline{Y}}{U_i\cap U}\dholoc{\overline{f}}{U_i\cap U})\otimes 1_{E_{U_i}}, \hspace{5pt} \text{if } z_j\circ z_i^{-1} \text{ is anti-holomorphic.}
        \end{array}\right. 
$$
Hence, $fP_U+(X,f)\in \mathcal{E}nd_{\twistC}(E)(U)$ and $P_U\equiv fP_U+(X,f)$. Also, we have $\dholoc{\sigma_1(P)}{U_j}(dz_j)=\langle Y_j,dz_j\rangle$ for each $j\in I$ with the following compatibility condition
\begin{equation}\label{ACC}
\langle Y_j,dz_j\rangle\otimes 1_{E_{U_j}} = \left\{\begin{array}{lr}
       \frac{\partial z_j}{\partial z_i}\langle Y_i,dz_i\rangle\otimes 1_{E_{U_i}},\hspace{5pt}  \text{if } z_j\circ z_i^{-1} \text{ is holomorphic}\\
        \\
        \frac{\partial z_j}{\partial \overline{z}_i}\langle \overline{Y}_i,d\overline{z}_i\rangle\otimes 1_{E_{U_i}}, \hspace{5pt} \text{if } z_j\circ z_i^{-1} \text{ is anti-holomorphic.}
        \end{array}\right. 
\end{equation}
Hence, $\dholoc{\sigma_1(P)}{U}(\xi)=\langle Y,\xi \rangle \otimes 1_{E_U}$ for all $\xi\in \dhform{d}(U)$.

Conversely, let $\dholoc{P}{U}\in \mathcal{D}_1(E)$ such that equation \eqref{DE} holds.
Then, we have $\dholoc{\sigma_1(P)}{U}(df)=\langle Y,f \rangle\otimes 1_{E_U}=(Y,f)\otimes 1_{E_U}$. On each chart $(U_i,z_i)$, we have $\dholoc{\sigma_1(P)}{U_i}(dz_i)=\langle Y_i,dz_i\rangle\otimes 1_{E_{U_i}}$ with the compatibility condition \eqref{ACC}. Hence, $\dholoc{\sigma_1(P)}{U}(\xi)=\langle Y,\xi\rangle\otimes 1_{E_U}$ for all $\xi\in \dhform{d}(U)$. 
The rest of the proof follows in the same line of arguments as in \cite[Proposition 3.2]{IBNR}.
\end{proof}

Let $\mathcal{A}t_d(E)$ denote the collection of all derivative endomorphisms for a $d$-holomorphic 
vector bundle $E$. The $\dhstruct{X}$-submodule $\mathcal{A}t_d(E)$ of $\mathcal{D}_1(E)$ is 
called the \emph{Atiyah algebra} of $E$. 

The symbol exact sequence \eqref{section3equation6} and direct sum decomposition \eqref{section4equation1} 
gives the following exact sequence,
\begin{equation}\label{section4equation2}
0\ra \mathcal{E}nd_{\dhstruct{X}}(E)\ra \mathcal{A}t_d(E)\ra T_dX\ra 0
\end{equation}
which we call the \emph{Atiyah exact sequence} for $d$-holomorphic vector bundle $E$.

\begin{definition}\label{section5definition1}\rm{
A $d$-holomorphic connection in $E$ is an $\dhstruct{X}$-module homomorphism 
$$
\nabla:T_dX\ra \mathcal{A}t_d(E)
$$ which is the splitting of Atiyah exact sequence \eqref{section4equation2}. 
}
\end{definition}
For a given $d$-holomorphic vector field $Y$ over some open subset $U$ of $X$, we denote $\nabla_U(Y)\in \mathcal{A}t_d(E)(U)$ by $(\nabla_Y)_U$.

Thus, $(\nabla_Y)_U:E|_U\ra E|_U$ is derivative endomorphism and for $s\in \Gamma(U,E)$, $f\in \dhstruct{X}(U)$ we have (from Definition \ref{section5definition1} and Proposition \ref{section4proposition2}),
\begin{itemize}
\item[1.]$(\nabla_{fY})_U(s)=f(\nabla_Y)_U(s)$
\item[2.]$(\nabla_Y)_U(fs)=(Yf)s+f(\nabla_Y)_U(s)$
\end{itemize}
\begin{proposition}
Let $\nabla$ be a $d$-holomorphic connection in $d$-holomorphic vector bundle $E$ over $X$. Then, there exists a unique $\R$ linear sheaf morphism $d_{\nabla}:\Omega_d^{0}(E)\ra \Omega_d^{1}(E)$ of degree 1 such that,
\begin{itemize}
\item[1.] If $\alpha\in \Omega^0_d(U)$ and $\phi\in \Omega^{0}_d(E)(U)$, $U$ open in $X$, then 
$$
d_{\nabla}(\alpha\wedge\phi)=(d\alpha)\wedge\phi+\alpha\wedge(d_{\nabla}\phi)
$$
\item[2.] For all $s\in E(U)$ and $Y\in T_dX(U)$, we have $(d_{\nabla}s)(Y)=\nabla_Ys$
\end{itemize}
\end{proposition}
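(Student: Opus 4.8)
The plan is to build $d_\nabla$ through the canonical isomorphism of $\dhstruct{X}$-modules
\[
\dhform{d}(E)\;\cong\;\mathcal{H}om_{\dhstruct{X}}(T_dX,E),
\]
which holds because $T_dX$ is an invertible $\dhstruct{X}$-module; under it a section of $\dhform{d}(E)$ over an open set $U$ is the same datum as an $\dhstruct{X}|_U$-linear sheaf morphism $T_dX|_U\to E|_U$. Given the splitting $\nabla$ of the Atiyah exact sequence \eqref{section4equation2} and a section $s\in\Omega^0_d(E)(U)=E(U)$, I would \emph{define} $d_\nabla s$ to be the assignment $Y\mapsto(\nabla_Y)_U(s)$, together with its obvious restrictions to every open $V\subseteq U$. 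The first thing to check is that this is a bona fide section of $\dhform{d}(E)$ over $U$: additivity in $Y$ is clear, and $\dhstruct{X}(U)$-linearity is exactly the first defining property of a $d$-holomorphic connection, $(\nabla_{fY})_U(s)=f(\nabla_Y)_U(s)$; compatibility with restriction holds because both $\nabla$ and the differential $d$-operators $(\nabla_Y)_U$ are morphisms of sheaves. Hence $d_\nabla\colon\Omega^0_d(E)\to\dhform{d}(E)$ is a well-defined $\R$-linear morphism of sheaves raising form degree by one, and property (2) holds by construction, since evaluating the morphism $d_\nabla s$ on $Y$ gives $(d_\nabla s)(Y)=(\nabla_Y)_U(s)=\nabla_Y s$.

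Next I would deduce the Leibniz rule (1) from the second defining property of $\nabla$. For $\alpha\in\Omega^0_d(U)=\dhstruct{X}(U)$, $\phi\in E(U)$ and any $Y\in T_dX(U)$,
\[
(\nabla_Y)_U(\alpha\phi)=(Y\alpha)\phi+\alpha(\nabla_Y)_U(\phi)=\langle Y,d\alpha\rangle\,\phi+\alpha\,(d_\nabla\phi)(Y),
\]
and since this equality of elements of $E(U)$ holds for every $Y$, under the isomorphism above it says precisely that $d_\nabla(\alpha\wedge\phi)=(d\alpha)\wedge\phi+\alpha\wedge(d_\nabla\phi)$ in $\dhform{d}(E)(U)$. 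Uniqueness is then immediate: because $T_dX$ is locally free of rank one, a section of $\dhform{d}(E)\cong\mathcal{H}om_{\dhstruct{X}}(T_dX,E)$ over $U$ is determined by its pairings against all local $d$-holomorphic vector fields, so property (2) forces $d_\nabla s$ to be the morphism just described. Concretely, on a chart $(U_i,z_i)$ with $d$-holomorphic frame $\{s_{il}\}$ and $\partial_{z_i}\in T_dX(U_i)$ the section dual to $dz_i$, properties (1) and (2) force $d_\nabla\bigl(\sum_l f_l s_{il}\bigr)=\sum_l (df_l)\otimes s_{il}+\sum_l f_l\,dz_i\otimes(\nabla_{\partial_{z_i}})_{U_i}(s_{il})$ on $U_i$ for $f_l\in\dhstruct{X}(U_i)$, leaving no freedom.

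The one point that genuinely needs care is that these chart-wise formulas are consistent on overlaps — equivalently, that the morphism $Y\mapsto\nabla_Y s$ is a well-defined global section, rather than something depending on the trivialisation. With the intrinsic definition above this is automatic, but if one instead wished to assemble $d_\nabla$ chart by chart it is exactly here that the anti-holomorphic transition maps intervene: one would combine the transformation rule \eqref{compatibilitycotangentsection1} for $dz_i$, the frame transformation \eqref{cocycle1}, the relations $\partial_{z_j}=\tfrac{\partial z_i}{\partial z_j}\,\partial_{z_i}$ (resp. $\partial_{z_j}=\tfrac{\partial\overline{z}_i}{\partial z_j}\,\partial_{\overline{z}_i}$), and the two connection identities, while tracking the complex conjugations that appear in the anti-holomorphic case. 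This conjugation bookkeeping is the main, though entirely routine, obstacle, and it is the reason I would present the intrinsic construction as the proof and merely record the local formula as a remark.
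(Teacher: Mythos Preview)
Your argument is correct, but it proceeds along a genuinely different route from the paper's. The paper constructs $d_\nabla$ chart by chart: on each $(U_i,z_i)$ it writes down the local formula $d_\nabla(\sum_l f_l s_{il})=\sum_l df_l\otimes s_{il}+\sum_l f_l\sum_m s_{im}(\omega_{U_i})^m_l$ using a connection matrix $\omega_{U_i}$, then spends most of the proof deriving the transformation law \eqref{connectioncompatibility} for $\omega_{U_i}$ under both holomorphic and anti-holomorphic transitions, and finally verifies by a long explicit computation that the chart-wise definitions of $d_\nabla$ agree on overlaps. You instead exploit the isomorphism $\dhform{d}(E)\cong\mathcal{H}om_{\dhstruct{X}}(T_dX,E)$ and define $d_\nabla s$ intrinsically as the $\dhstruct{X}$-linear functional $Y\mapsto(\nabla_Y)s$, so that the gluing is automatic and the conjugation bookkeeping simply never appears. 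Your approach is cleaner and handles uniqueness and existence in one stroke; the paper's approach, while laborious, has the payoff of producing the explicit connection-form transformation rule \eqref{connectioncompatibility}, which is invoked later (for instance in the proof of Proposition~\ref{s4p7}). Your final paragraph correctly identifies this trade-off.
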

\begin{proof}
The uniqueness of $\R$-linear sheaf morphism $d_{\nabla}$ can be proved using the same line of arguements as in \cite[Proposition 4.2]{IBNR}. So it is sufficient to prove existence.

\noindent \textbf{Existence:}
First, we need to prove local existence(on some chart $(U_i,z_i)$), which is easy following to same line of arguements as in \cite[Proposition 4.2]{IBNR} and given as follows

Let $\{s_{il}\}_{1\leq l\leq r}$ be frame for $E|_{U_i}$ and for $\phi=\displaystyle\sum_{l=1}^r \dholoc{\phi}{U_il} \otimes s_{il}\in \Omega^0_d(E)$, we have
$$
\begin{array}{ll}
\dholoc{d_{\nabla}}{U_i}(\phi)&=\displaystyle\sum_{m=1}^r  d(\dholoc{\phi}{U_im})\otimes s_{im}+(\displaystyle\sum_{m=1}^r \dholoc{\phi}{U_im}\displaystyle\sum_{l=1}^r(\dholoc{\omega}{U_i})^l_m s_{il})
\end{array}
$$
and 
$$
\dholoc{d_{\nabla}}{U_i}(s_{im})=\displaystyle\sum_ls_{il} (\dholoc{\omega}{U_i})^l_m 
$$
where $(\dholoc{\omega}{U_i})^l_m\in \dhform{d}(U_i)$ with the following compatibility condition
\begin{equation}\label{connectioncompatibility}
    \dholoc{\omega}{U_j} = \left\{\begin{array}{lr}
        (\cocyclebundle{i}{j}{E})^{-1}.\dholoc{\omega}{U_i}.\cocyclebundle{i}{j}{E}+(\cocyclebundle{i}{j}{E})^{-1}. d(\cocyclebundle{i}{j}{E}),\hspace{10pt}  \text{if } z_{j}\circ z_{i}^{-1} \text{ is holomorphic}\\
        \\
        (\overlinecocyclebundle{i}{j}{E})^{-1}.\dholoc{\overline{\omega}}{U_i}.\overlinecocyclebundle{i}{j}{E}+(\overlinecocyclebundle{i}{j}{E})^{-1}. d(\overlinecocyclebundle{i}{j}{E}), \hspace{10pt} \text{if } z_{j}\circ z_{i}^{-1} \text{ is anti-holomorphic}
        \end{array}\right. 
\end{equation}
which can be deduced as follows

Let two charts $(U_i,z_i)$, $(U_j,z_j)$ on $X$ and frames for $E_{U_i}$, $E_{U_j}$ be $\{s_{il}\}$, $\{s_{ij}\}$ $(1\leq l\leq r)$, respectively. Let $\cocyclebundle{j}{i}{E}:U_i\cap U_j\ra GL(r,\C)$ be co-cycle map satisfying the relation \eqref{cocycle}.

First, let us consider the case when $z_j\circ z_i^{-1}$ is anti-holomorphic. Then, we have
\[
\begin{array}{ll}
\dholoc{d_{\nabla}}{U_j}(s_{jm})&=\dholoc{d_\nabla}{U_i}\bigl(\displaystyle\sum_l 
\overline{s}_{il}(\overlinecocyclebundle{i}{j}{E})^l_m\bigl)\\
&=\displaystyle\sum_l d\bigl((\overlinecocyclebundle{i}{j}{E})^l_m\bigl)\overline{s}_{il}+
\displaystyle\sum_l (\overlinecocyclebundle{i}{j}{E})^l_m\dholoc{d_\nabla}{U_i}(\overline{s}_{il})\\
\end{array}
\]
Hence, 
\[
\begin{array}{ll}
\displaystyle\sum_l (\dholoc{\omega}{U_j})^l_m s_{jl}&=\displaystyle\sum_l 
d\bigl((\overlinecocyclebundle{i}{j}{E})^l_m\bigl)\displaystyle\sum_o s_{jo}(\cocyclebundle{j}{i}{E})^o_l+
\displaystyle\sum_l (\overlinecocyclebundle{i}{j}{E})^l_m
\displaystyle\sum_o\overline{s}_{io}(\dholoc{\overline{\omega}}{U_i})_l^o\\
&=\displaystyle\sum_o s_{jo} \Bigl(\cocyclebundle{j}{i}{E}d\bigl((\overlinecocyclebundle{i}{j}{E})^o_m\bigl)\Bigl)+
\displaystyle\sum_l (\cocyclebundle{j}{i}{E})^p_o\displaystyle\sum_o\sum_p s_{jp}
(\overlinecocyclebundle{i}{j}{E})^l_m(\dholoc{\overline{\omega}}{U_i})_l^o\\
\end{array}
\]
or
\[
\begin{array}{ll}
\displaystyle\sum_l s_{jl} {\dholoc{\omega}{U_j}}^l_m &=\displaystyle\sum_l s_{jl} 
\bigl(\cocyclebundle{j}{i}{E}d(\overlinecocyclebundle{i}{j}{E})\bigl)_{lm}+\displaystyle\sum_l s_{jl}
(\cocyclebundle{j}{i}{E}\dholoc{\overline{\omega}}{U_i} \overlinecocyclebundle{i}{j}{E})_{lm}\\
\end{array}
\]
Hence, 
$$
\dholoc{\omega}{U_j}=(\overlinecocyclebundle{i}{j}{E})^{-1}d(\overlinecocyclebundle{i}{j}{E})+
(\overlinecocyclebundle{i}{j}{E})^{-1}\dholoc{\overline{\omega}}{U_i}\overlinecocyclebundle{i}{j}{E}
$$
Similary, if $z_j\circ z_i^{-1}$ is holomorphic, then we have
$$
\dholoc{\omega}{U_j}=(\cocyclebundle{i}{j}{E})^{-1}d(\cocyclebundle{i}{j}{E})+
(\cocyclebundle{i}{j}{E})^{-1}\dholoc{\omega}{U_i}\cocyclebundle{i}{j}{E}
$$

Now let $s=[\dholoa{U}\cap V,s_{\dholoa{U}\cap V}]\in \Omega^0_d(E)(V)$ for some open subset $V$ 
of $X$ and $\dholoa{U}=\{(U_i,z_i\}_{i\in I}$ be atlas on $X$. If $\{s_{il}\}_{1\leq l\leq r}$ be 
frame for $E|_{U_i}$ then each $\dholoc{s}{U_i\cap V}$ can be written as 
$\displaystyle\sum_{l=1}^r{\dholoc{f}{U_i\cap V}}_ls_{il}$, where 
${\dholoc{f}{U_i\cap V}}_l\in \dhstruct{X}(U_i\cap V)$ and we have the family 
$\{\dholoc{d_{\nabla}}{U_i\cap V}(\dholoc{s}{U_i\cap V})\}_{i\in I}$ such that, if $z_j\circ z_i^{-1}$ 
is anti-holomorphic then we have,
\[
\begin{array}{ll}
\dholoc{d_{\nabla}}{U_j\cap V}(\displaystyle\sum_l{\dholoc{f}{U_j\cap}}_ls_{jl})&=
\displaystyle\sum_l s_{jl}d({\dholoc{f}{U_j\cap V}}_l)+
\displaystyle\sum_m\bigl(\sum_l s_{jl}(\dholoc{\omega}{U_j})^l_m\bigl){\dholoc{f}{U_j\cap V}}_m\\
&=\displaystyle\sum_l\bigl(\sum_m\overline{s}_{im}(\overlinecocyclebundle{i}{j}{E})^m_l\bigl)
d({\dholoc{f}{U_j\cap V}}_l)\\
&\quad \quad+\displaystyle\sum_n\Bigl(\sum_l\bigl(\sum_m\overline{s}_{im}(\cocyclebundle{i}{j}{E})^m_l\bigl)
[(\cocyclebundle{i}{j}{E})^{-1}\dholoc{\overline{\omega}}{U_i}
\overlinecocyclebundle{i}{j}{E}]^l_n\Bigl){\dholoc{f}{U_j\cap V}}_n\\
 &\qquad \qquad+\displaystyle\sum_n\Bigl(\sum_l\bigl(\sum_m\overline{s}_{im}
 (\overlinecocyclebundle{i}{j}{E})^m_l\bigl)[(\overlinecocyclebundle{i}{j}{E})^{-1}
 d(\overlinecocyclebundle{i}{j}{E})]^l_n\Bigl){\dholoc{f}{U_n\cap V}}_n\\
&=\displaystyle\sum_m\overline{s}_{im}\bigl(\displaystyle\sum_l
(\overlinecocyclebundle{i}{j}{E})^m_ld({\dholoc{\overline{f}}{U_j\cap V}}_l)\bigl)\\
&\quad\quad+\displaystyle\sum_m\overline{s}_{im}\bigl(\displaystyle\sum_l 
d(\overlinecocyclebundle{i}{j}{E})^m_l {\dholoc{f}{U_j\cap V}}_l\bigl)\\
&\qquad\qquad+\displaystyle\sum_m\overline{s}_{im}\bigl(\displaystyle\sum_l 
(\dholoc{\overline{\omega}}{U_i})^m_l(\overlinecocyclebundle{i}{j}{E}[\dholoc{S}{U_j\cap V}])_l\bigl)\\
&=\displaystyle\sum_m\overline{s}_{im} d(\overlinecocyclebundle{i}{j}{E}[\dholoc{S}{U_j\cap V}])_m+
\displaystyle\sum_m\overline{s}_{im}\bigl(\dholoc{\overline{\omega}}{U_i}(\overlinecocyclebundle{i}{j}{E}
[\dholoc{S}{U_j\cap V}])\bigl)_m\\
&=\displaystyle\sum \overline{s}_{im} d({\dholoc{\overline{f}}{U_i\cap V}}_m)+\displaystyle\sum_l
\bigl(\sum_m\overline{s}_{im}(\dholoc{\overline{\omega}}{U_i})^m_l\bigl){\dholoc{\overline{f}}{U_i\cap V}}_l=\overline{\dholoc{d_{\nabla}}{U_i\cap V}(\displaystyle\sum_l {\dholoc{f}{U_i\cap V}}_ls_{il})}
\end{array}
\]
and if $z_j\circ z_i^{-1}$ is holomorphic, then we have
$$
\dholoc{d_{\nabla}}{U_j\cap V}(\displaystyle\sum_l {\dholoc{f}{U_j\cap V}}_ls_{jl})=
\dholoc{d_{\nabla}}{U_i\cap V}(\displaystyle\sum_l {\dholoc{f}{U_i\cap V}}_ls_{il})
$$
Hence, we have the following compatibility condition
$$
    \dholoc{d_{\nabla}}{U_j\cap V}(\dholoc{s}{U_j\cap V})= \left\{\begin{array}{lr}
        \dholoc{d_{\nabla}}{U_i\cap V}(\dholoc{s}{U_i\cap V}),\hspace{10pt}  
        ~\text{if } z_{j}\circ z_{i}^{-1} \text{ is holomorphic}\\
        \\
        \overline{\dholoc{d_{\nabla}}{U_i\cap V}(\dholoc{s}{U_i\cap V})}, \hspace{10pt} 
        ~\text{if } z_{j}\circ z_{i}^{-1} \text{ is anti-holomorphic}
        \end{array}\right. 
$$
This shows that the family $\{\dholoc{d_{\nabla}}{U_i\cap V}(\dholoc{s}{U_i\cap V})\}_{i\in I}$ 
satisfies the condition \eqref{compatibilitycotangentsection}, i.e., we have 
$\dholoc{d_{\nabla}}{V}(s)=[\dholoa{U}\cap V,\dholoc{d_{\nabla}}{\dholoa{U}\cap V}
(s_{\dholoa{U}\cap V})]\in \dhform{d}(E)(V)$ and $\dholoc{d_{\nabla}}{V}(s)$ satisfies properties 
(1) and (2) because each $\dholoc{d_{\nabla}}{U_i\cap V}$ satisfies these properties.
\end{proof}

\subsection{\v{C}ech description of extension class}\label{s5ssec1}
Let 
$$
\alpha:0\ra E'\xrightarrow{i} E\xrightarrow{P} E'' \ra 0
$$
be an extension of $d$-holomorphic bundles $E''$ by $E'$.
There is an element $$\epsilon(\alpha)\in H^1(X,\mathcal{H}om(E'',E'),$$ which is unique for the 
equivalence class of isomorphic extensions, also known as the extension class. 
Let $\dholoa{U}=\{U_{\alpha},z_{\alpha}\}_{\alpha\in I}$ be a dianalytic atlas on $X$.

We have a short exact sequence
$$
0\ra \mathcal{H}om(E''|_{U_{\alpha}},E'|_{U_{\alpha}})\xrightarrow{i^*} 
\mathcal{H}om(E''|_{U_{\alpha}},E|_{U_{\alpha}})\xrightarrow{P^*}
\mathcal{E}nd(E''|_{U_{\alpha}})\ra 0.
$$
Let $1_{E''|_{U_{\alpha}}}\in \Gamma\bigl(X,\mathcal{E}nd(E''|_{U_{\alpha}})\bigl)$. Since $P^*$ is 
surjective, we have $h_{\alpha}:E''|_{U_{\alpha}}\ra E|_{U_{\alpha}}$ such that 
$P_{\alpha}\circ h_{\alpha}={1}_{E''|_{U_\alpha}}$ or $P^*(h_{\alpha})=1_{E''_{U_\alpha}}$.
Define 
\[
    \tilde{u}_{\alpha\beta} = \left\{\begin{array}{lr}
        {h_{\beta}}|_{(U_{\beta}\cap U_{\alpha})}-
{h_{\alpha}}|_{(U_{\beta}\cap U_{\alpha})},  \hspace{10pt} 
\text{if } z_{\beta}\circ z_{\alpha}^{-1} \text{ is holomorphic}\\
        \\
        {h_{\beta}}|_{(U_{\beta}\cap U_{\alpha})}-
{\overline{h}_{\alpha}}|_{(U_{\beta}\cap U_{\alpha})},  \hspace{10pt} 
\text{if } z_{\beta}\circ z_{\alpha}^{-1} \text{ is anti-holomorphic}
        \end{array}\right.
  \]
Then, 
\[
    P^*(\tilde{u}_{\alpha\beta}) = \left\{\begin{array}{lr}
        1_{E''_{U_{\beta}}}-1_{E''_{U_{\alpha}}},  \hspace{10pt} 
        \text{if } z_{\beta}\circ z_{\alpha}^{-1} \text{ is holomorphic}\\
        \\
        1_{E''_{U_{\beta}}}-\overline{1}_{E''_{U_{\alpha}}}, \hspace{10pt} 
        \text{if } z_{\beta}\circ z_{\alpha}^{-1} \text{ is anti-holomorphic}
        \end{array}\right.
  \]
or, $P^*(\tilde{u}_{\alpha\beta})\equiv 0$.

Because of the exactness of
$$
0\ra H^0\bigl(U_{\alpha}\cap U_{\beta},\mathcal{H}om(E'',E')\bigl)\ra 
H^0\bigl(U_{\alpha}\cap U_{\beta},\mathcal{H}om(E'',E)\bigl)
\xrightarrow{P^*}H^0\bigl(U_{\alpha}\cap U_{\beta},\mathcal{E}nd(E'')\bigl)
$$
we have a unique section $u_{\alpha\beta}\in \Gamma\bigl(U_{\alpha}\cap U_{\beta},\mathcal{H}om(E'',E')\bigl)$ 
such that $i^*(u_{\alpha\beta})=\tilde{u}_{\alpha\beta}$,
and on chart $(U_{\alpha}\cap U_{\beta}\cap U_{\gamma},z_{\gamma})$, we have 
\[
    i^*(u_{\alpha\gamma}) = \left\{\begin{array}{lr}
        i^*(u_{\alpha\beta})+i^*(u_{\beta\gamma}), \hspace{10pt} 
         \text{if } z_{\gamma}\circ z_{\beta}^{-1} \text{ is holomorphic}\\
        \\
        i^*(\overline{u}_{\alpha\beta})+i^*(u_{\beta\gamma}),  \hspace{10pt} 
        \text{if } z_{\gamma}\circ z_{\beta}^{-1} \text{ is anti-holomorphic}
        \end{array}\right.
  \]
or,
\[
    u_{\alpha\gamma} = \left\{\begin{array}{lr}
        u_{\alpha\beta}+u_{\beta\gamma},  \hspace{10pt} 
        \text{if } z_{\gamma}\circ z_{\beta}^{-1} \text{ is holomorphic}\\
        \\
        \overline{u}_{\alpha\beta}+u_{\beta\gamma},  \hspace{10pt} 
        \text{if } z_{\gamma}\circ z_{\beta}^{-1} \text{ is anti-holomorphic}
        \end{array}\right.
  \]
Therefore, the family of sections 
$\{u_{\alpha\beta}\}_{\alpha,\beta\in I}$ satisfy the 1-cocycle condition i.e., 
$\{u_{\alpha\beta}\}_{\alpha,\beta\in I}$ is a 1-cocycle of \v{C}ech complex 
$C^{\bullet}\bigl(\dholoa{U},\mathcal{E}nd(E'',E')\bigl)$. The extension class $\epsilon(a)$, which is an element of $H^1\bigl(X,\mathcal{E}nd(E'',E')\bigl)$, equals to \v{C}ech cohomology class of the 1-cocycle 
$\{u_{\alpha\beta}\}$.

The extension class of Atiyah exact sequence is called \emph{Atiyah class}. For a given $d$-holomorphic 
bundle $E$, the Atiyah class will be denoted by $at_d(E)$. 

From definition of the Atiyah class, it is obvious to note that $at_d(E)$ is an element of 
$H^1\Bigl(X,\mathcal{H}om\bigl(T_dX,\mathcal{E}nd_{\dhstruct{X}}(E)\bigl)\Bigl)$ but there is canonical isomorphism,
$$H^1\Bigl(X,\mathcal{H}om\bigl(T_dX,\mathcal{E}nd_{\dhstruct{X}}(E)\bigl)\Bigl) \cong 
H^1\Bigl(X,\dhform{d}\bigl(\mathcal{E}nd_{\dhstruct{X}}(E)\bigl)\Bigl)
$$
Further, using Dolbeault isomorphism \eqref{s4dolbeaultiso} we can consider 
$at_d(E)\in H^{(1,1)}\bigl(X,\mathcal{E}nd_{\dhstruct{X}}(E)\bigl)$. 

It is important to note that an extension splits if and only if the extension class vanishes, so a
$d$-holomorphic bundle $E$ admits $d$-holomorphic connection if $at_d(E)$ vanishes.

\subsection*{\v{C}ech description of Atiyah class}
Let $\dholoa{U}=\{U_i,z_i\}_{i\in I}$ be a $d$-holomorphic atlas of $X$. Note that $d$-holomorphic 
vector bundle $E$ (rank r) on $X$ is trivial bundle on $U_i$ for each $i\in I$, let 
$\{s_{i l}\}_{1\leq l\leq r}$ frame for $E|_{U_i}$. Note that, trivial bundle $E|_{U_i}$ always 
admits canonical connection defined as $
\Delta_i(\displaystyle\sum_{m=1}^r \dholoc{f}{U_i m}s_{i m})=\displaystyle\sum_{m=1}^r
\frac{\partial \dholoc{f}{U_{im}}}{\partial z_i} s_{im}\otimes dz_i$. Here each $\Delta_i$ is splitting of 
Atiyah exact sequence for $E|_{U_i}$. Take
\[
    \theta_{ij} = \left\{\begin{array}{lr}
        \Delta_j|_{(U_i\cap U_j)}-\Delta_i|_{(U_i\cap U_j)}, \hspace{10pt} 
         \text{if } z_{j}\circ z_{i}^{-1} \text{ is holomorphic}\\
        \\
        \Delta_j|_{(U_i\cap U_j)}-\overline{\Delta}_i|_{(U_i\cap U_j)},  \hspace{10pt} 
        \text{if } z_{j}\circ z_{i}^{-1} \text{ is anti-holomorphic}
        \end{array}\right.
  \] 

For two charts $(U_i,z_i)$ and $(U_j,z_j)$ on $X$, let $\cocyclebundle{j}{i}{E}: U_{i}\cap U_{j}\ra GL(r,\C)$ 
be co-cycle map for bundle $E$ with respect to 
frames $s_i=\{s_{i1},\dots s_{ir}\}$ and $s_j=\{s_{j1},\dots s_{jr}\}$ for $E_{U_i}$ and $E_{U_j}$, 
respectively. 

Using \eqref{cocycle1}, it can be easily check that the Atiyah class $at_d(E)$ of $d$-holomorphic 
bundle $E$ is given by 1 co-cycle 
$\theta_{ij}\in \Gamma\Bigl(U_i\cap U_j,\dhform{d}\bigl(\mathcal{E}nd_{\dhstruct{X}}(E)\bigl)\Bigl)$ satisfying the 
following conditions (cf. \cite[Section 4.4]{IBNR})
\begin{equation}\label{sec2Atiyah class}
    \theta_{ij}= \left\{\begin{array}{lr}
        -\bigl((\cocyclebundle{i}{j}{E})^{-1}d(\cocyclebundle{i}{j}{E})\bigl),\hspace{10pt} 
        ~ \text{if } z_{j}\circ z_{i}^{-1} \text{ is holomorphic}\\
        \\
       -\bigl((\overlinecocyclebundle{i}{j}{E})^{-1}d(\overlinecocyclebundle{i}{j}{E})\bigl), \hspace{10pt} 
       ~ \text{if } z_{j}\circ z_{i}^{-1} \text{ is anti-holomorphic}
        \end{array}\right. 
\end{equation}

\begin{proposition}\label{s4p7}
Let $E$ be a $d$-holomorphic bundle over Klein surface $X$ and $D$ be a $d$-smooth connection on 
$E$ compatible with $d$-holomorphic structure of $E$. Let $R$ denote the curvature of $D$. Then, 
the dolbeault isomorphism $\delta:H^{1,1}\bigl(X,\mathcal{E}nd_{\dhstruct{X}}(E)\bigl)\ra 
H^1\Bigl(X,\dhform{d}\bigl(\mathcal{E}nd_{\dhstruct{X}}(E)\bigl)\Bigl)$ carries the class $[R^{(1,1)}]$ to 
$-at_d(E)$. In particular $[R^{(1,1)}]$ is independent of connection $D$.
\end{proposition}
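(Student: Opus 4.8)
The plan is to push the Dolbeault class $[R^{(1,1)}]$ through the explicit \v{C}ech description of the Dolbeault isomorphism (the one recorded right after \eqref{s4dolbeaultiso}), applied to the bundle $\mathcal{E}nd_{\dhstruct{X}}(E)$, and to recognize the resulting $1$-cocycle as $-\theta_{ij}$, where $\{\theta_{ij}\}$ is the \v{C}ech representative of $at_d(E)$ from \eqref{sec2Atiyah class}.

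First I would fix a $d$-holomorphic atlas $\dholoa{U}=\{(U_i,z_i)\}_{i\in I}$ together with a $d$-holomorphic frame $\{s_{il}\}_{1\leq l\leq r}$ of $E|_{U_i}$ for each $i$; it induces a $d$-holomorphic frame of $\mathcal{E}nd_{\dhstruct{X}}(E)|_{U_i}$, with respect to which $\overline{\partial}_{\mathcal{E}nd_{\dhstruct{X}}(E)}$ acts on $\mathcal{E}nd_{\dhstruct{X}}(E)$-valued forms as $\overline{\partial}$ on matrix entries. Let $\dholoc{\omega}{U_i}$ be the connection matrix of $D$ in this frame; it is of type $(1,0)$ since $D$ is compatible with the $d$-holomorphic structure. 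Regarding the matrix-valued $(1,0)$-form $\dholoc{\omega}{U_i}$ as an $\mathcal{E}nd_{\dhstruct{X}}(E)|_{U_i}$-valued $(1,0)$-form via the chosen frame, \eqref{s4eq1} gives $\overline{\partial}_{\mathcal{E}nd_{\dhstruct{X}}(E)}(\dholoc{\omega}{U_i})=\dholoc{\Omega}{U_i}^{(1,1)}=R^{(1,1)}|_{U_i}$; combined with $\overline{\partial}_{\mathcal{E}nd_{\dhstruct{X}}(E)}R^{(1,1)}=0$ (observed after \eqref{s4eq1}), this shows that $\tau_i:=\dholoc{\omega}{U_i}$ is an admissible family of local primitives of $R^{(1,1)}$ for the \v{C}ech construction of $\delta([R^{(1,1)}])$.

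It then remains to compute the \v{C}ech $1$-cocycle $\{u_{ij}\}$, where over $U_i\cap U_j$ one sets $u_{ij}=\tau_j-\tau_i$ if $z_j\circ z_i^{-1}$ is holomorphic and $u_{ij}=\tau_j-\overline{\tau}_i$ if it is anti-holomorphic. Working in the frame $\{s_{jl}\}$ over $U_i\cap U_j$: when the transition is holomorphic, the change-of-frame relation \eqref{cocycle} carries the matrix of $\tau_i$ to $(\cocyclebundle{i}{j}{E})^{-1}\dholoc{\omega}{U_i}\cocyclebundle{i}{j}{E}$, and the transformation law \eqref{connectioncompatibility} for connection forms gives
\[
u_{ij}=\dholoc{\omega}{U_j}-(\cocyclebundle{i}{j}{E})^{-1}\dholoc{\omega}{U_i}\cocyclebundle{i}{j}{E}=(\cocyclebundle{i}{j}{E})^{-1}d(\cocyclebundle{i}{j}{E}).
\]
When the transition is anti-holomorphic, the conjugation in $\overline{\tau}_i$ together with the (conjugated) change of frame turns the matrix of $\tau_i$ into $(\overlinecocyclebundle{i}{j}{E})^{-1}\overline{\dholoc{\omega}{U_i}}\,\overlinecocyclebundle{i}{j}{E}$, which is exactly the homogeneous part of the anti-holomorphic branch of \eqref{connectioncompatibility}, and hence $u_{ij}=(\overlinecocyclebundle{i}{j}{E})^{-1}d(\overlinecocyclebundle{i}{j}{E})$. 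In both cases the relevant transition cocycle is $d$-holomorphic, so $d=\partial$ on it and $u_{ij}$ is a $\overline{\partial}_{\mathcal{E}nd_{\dhstruct{X}}(E)}$-closed $(1,0)$-form, i.e. a local section of $\dhform{d}(\mathcal{E}nd_{\dhstruct{X}}(E))$, as it must be. Comparing with \eqref{sec2Atiyah class}, and noting that \eqref{sec2Atiyah class} is written in the same frame $\{s_{jl}\}$ (since there $\theta_{ij}=\Delta_j-\Delta_i$ and $\Delta_j$ has vanishing connection form in $\{s_{jl}\}$), we get $u_{ij}=-\theta_{ij}$ termwise, whence $\delta([R^{(1,1)}])=[\{u_{ij}\}]=-[\{\theta_{ij}\}]=-at_d(E)$. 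Finally, since $\delta$ is an isomorphism and $-at_d(E)$ does not depend on $D$, the class $[R^{(1,1)}]=\delta^{-1}(-at_d(E))$ is independent of the chosen connection.

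I expect the main obstacle to be bookkeeping rather than conceptual: one must verify carefully that $\tau_i=\dholoc{\omega}{U_i}$ is genuinely an $\mathcal{E}nd_{\dhstruct{X}}(E)$-valued form (not merely a matrix) and that $\overline{\partial}_{\mathcal{E}nd_{\dhstruct{X}}(E)}$ coincides with the entrywise $\overline{\partial}$ in a $d$-holomorphic frame; and --- the one genuinely delicate point --- that the conjugation in the \v{C}ech coboundary $\tau_j-\overline{\tau}_i$ interacts with the conjugated change-of-frame relation \eqref{cocycle} (equivalently, with the anti-holomorphic branch of the $\mathcal{H}om$-compatibility \eqref{section3equation1}) so as to reproduce precisely the anti-holomorphic branch of \eqref{sec2Atiyah class}, with the correct sign.
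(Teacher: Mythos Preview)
Your proposal is correct and follows essentially the same route as the paper: the paper's proof is a one-line reference to \cite[Proposition 4.5]{IBNR}, invoking precisely \eqref{cocycle1}, \eqref{s4eq1}, \eqref{connectioncompatibility}, and \eqref{sec2Atiyah class}, which are exactly the ingredients you use. In fact you have supplied the details the paper omits --- taking $\tau_i=\dholoc{\omega}{U_i}$ as local $\overline{\partial}$-primitives of $R^{(1,1)}$ via \eqref{s4eq1}, computing the \v{C}ech cocycle $u_{ij}$ from \eqref{connectioncompatibility}, and matching it with $-\theta_{ij}$ from \eqref{sec2Atiyah class} --- and your caution about the anti-holomorphic branch (that the conjugation in the twisted \v{C}ech differential must line up with the conjugated change of frame so that only the inhomogeneous term $(\overlinecocyclebundle{i}{j}{E})^{-1}d(\overlinecocyclebundle{i}{j}{E})$ survives) is exactly the bookkeeping point that needs care.
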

\begin{proof} The proof follows in the same line of arguements as in \cite[Proposition 4.5]{IBNR} 
using \eqref{cocycle1}, \eqref{s4eq1}, \eqref{connectioncompatibility} and the description of 
Atiyah class \eqref{sec2Atiyah class}.
\end{proof}

\begin{remark}\rm{
One may extend the concepts and results of this section to $d$-complex manifolds of higher dimensions. 
Presumably, the Hodge decomposition theorem remains valid for compact $d$-K\"ahler manifolds. Using it, 
one can show that if a $d$-holomorphic bundle $E$ admits a holomorphic connection, then all the 
Chern classes of $E$ vanish (cf. \cite[Theorem 3.6]{IBNR}). 
}
\end{remark}

\section{Jet bundles}\label{sec-Jet}
Now we will relate differential $d$-operators and holomorphic connections to jet bundles 
associated to $d$-holomorphic bundles.

Let $E$ be a $d$-holomorphic bundle on a Klein surface $X$. There is a natural $d$-holomorphic bundle
$J^1(E)$ such that the fibre $J^1(E)(x) = \bigslant{E_x}{\m{x}^2E_x}$, where $\m{x}$ is the maximal 
ideal in $\dhstruct{X,x}$ (cf. \cite[Chapter IV]{RSP}). 
Let $\dholoa{U}=\{U_i,z_i\}_{i\in I}$ be $d$-holomorphic atlas on $X$ and $\{U_i,\phi_i\}_{i\in I}$ 
be associated $d$-holomorphic trivialization of $E$. Using the arguements as in \cite{RSP}, we have 
$J^1(E)|_{U_i}\overset{\psi_i}\simeq U_i\times\C^r\oplus L(\C,\C^r)$, where $L(\C,\C^r)$ is the
space of linear maps. Let $s=[\dholoa{U}\cap U,\dholoc{s}{\dholoa{U}\cap U}]\in \Gamma(U,E)$ and 
$[\dholoa{U}\cap U,\dholoc{j_1(s)}{\dholoa{U}\cap U}]\in \Gamma\bigl(U,J^1(E)\bigl)$, then 
$\psi_i(\dholoc{j_1(s)}{U_i\cap U})(x)=\bigl(x,\dholoc{\xi}{U_i\cap U}(x)\bigl)$, where 
$\dholoc{\xi}{U_i\cap U}(x)=\bigl(\phi_i\circ \dholoc{s}{U_i}(x),\frac{\partial(\phi_i\circ 
\dholoc{s}{U_i})}{\partial z_i}(x)\bigl)\in \C^r\oplus L^1_s(\C,\C^r)$ with the compatibility 
condition as follows
\[
    \bigl(\phi_j\circ \dholoc{s}{U_j}(x),\frac{\partial(\phi_j\circ \dholoc{s}{U_j})}{\partial z_j}(x)\bigl) = \left\{\begin{array}{lr}
        \bigg(\cocyclebundle{j}{i}{E}\Big(z_i(x)\Big)\Big(\phi_i\circ \dholoc{s}{U_i}(x)\Big),\cocyclebundle{j}{i}{E}\Big(z_i(x)\Big)\Big(\frac{\partial(\phi_i\circ \dholoc{s}{U_i})}{\partial z_i}(x)\frac{\partial z_i}{\partial z_j}(x)\Big)\bigg),\\
\qquad\qquad\text{if } z_{j}\circ z_{i}^{-1} \text{ is holomorphic}\\
        \\
        \bigg(\cocyclebundle{j}{i}{E}\Big(z_i(x)\Big)\Big(\overline{\phi_i\circ \dholoc{s}{U_i}(x)}\Big),\cocyclebundle{j}{i}{E}\Big(z_i(x)\Big)\Big(\frac{\partial(\overline{\phi_i\circ \dholoc{s}{U_i}})}{\partial \overline{z}_i}(x)\frac{\partial \overline{z}_i}{\partial z_j}(x)\Big)\bigg),\\
        \qquad\qquad\text{if } z_{j}\circ z_{i}^{-1} \text{ is anti-holomorphic}
        \end{array}\right. 
  \]

For each open set $U \subset X$, there is a canonical map $\dholoc{(j_1)}{U}:E(U)\ra J^1(E)(U)$ 
such that for $s=[\dholoa{U}\cap U,\dholoc{s}{\dholoa{U}\cap U}]\in E(U)$, $\dholoc{(j_1)}{U}(s)(x)=
s_x+\mathfrak{m}_x^2E_x$ and $\dholoc{j_1(s)}{U}=\dholoc{(j_1)}{U}(s)=
[\dholoa{U}\cap U,\dholoc{j_1(s)}{\dholoa{U}\cap U}]$ with the compatibility condition as follows
\[
   \dholoc{j_1(s)}{U_j\cap U}(x) = \left\{\begin{array}{lr}
        \dholoc{j_1(s)}{U_i\cap U}(x),\hspace{10pt}  
        \text{if } z_j\circ z_i^{-1} \text{ is holomorphic}\\
        \\
         \dholoc{\overline{j_1(s)}}{U_i\cap U}(x), \hspace{10pt} 
         \text{if } z_j\circ z_i^{-1} \text{ is anti-holomorphic.}
        \end{array}\right. 
  \]
Given a section 
$[(\dholoa{U}\cap U,s_{\dholoa{U}\cap U})]\in \Gamma(U,E)$, we get a family of maps 
$j_{1,U_i\cap U}:U\cap U_{i}\ra J^1(E)$ with the following twisting conditions:
\[
    j_{1,U_j\cap U} = \left\{\begin{array}{lr}
        j_{1,U_i\cap U},\hspace{10pt}  
        ~\text{if } z_j\circ z_i^{-1} \text{ is holomorphic}\\
        \\
        \overline{j}_{1,U_i\cap U}, \hspace{10pt} 
        ~\text{if } z_j\circ z_i^{-1} \text{ is anti-holomorphic}
        \end{array}\right. 
  \]
i.e., we have a $d$-holomorphic section $[(\dholoa{U}\cap U,j_{1,\dholoa{U}\cap U})]\in 
\Gamma\bigl(U,J^1(E)\bigl)$.

The bundle $J^1(E)$ is called the $d$-holomorphic bundle of $1$-jets of $E$. 
If $(U_i,z_i)$ is a $d$-holomorphic chart on $X$ and $s_i=\{s_{i 1},s_{i2},\dots s_{il}\}$ be frame 
for $E_{U_i}$, then the family, 
$$
\{j_1(s_{il})|1\leq l\leq r\}\cup\{j_1(z_i s_{i l})|1\leq l\leq r\}
$$ 
is frame for $J^1(E)_{U_i}$. In particular, the rank of $J^1(E)$ is $2r$. The sheaf 
morphism $j_1:E\ra J^1(E)$ is $\R$-linear; however it is not $\dhstruct{X}$-linear.

Using Proposition \ref{section3proposition10} (\ref{p3}), it is easy to verify that we have

\begin{proposition}\cite[cf. Proposition 5.1]{IBNR}
The canonical morphism $j_1:E\ra J^1(E)$ is a first-order differential $d$-operator moreover for 
every $d$-holomorphic vector bundle $F$ over $X$, the map 
$j_1^*:\mathcal{H}om_{\dhstruct{X}}(J^1(E),F)\ra \mathcal{D}_1(E,F)$ is an isomorphism of 
$\dhstruct{X}$-modules.
\end{proposition}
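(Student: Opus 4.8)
The plan is to prove both assertions locally on the charts of a $d$-holomorphic atlas $\dholoa{U}=\{(U_i,z_i)\}_{i\in I}$, using the explicit frame description of $J^1(E)$ already set up above, and then to glue. First I would show that $j_1\colon E\ra J^1(E)$ is a first-order differential $d$-operator. By Proposition \ref{section3proposition10}\,(\ref{p3}) it suffices to exhibit, over each chart $(U_i,z_i)$ with frame $s_i=\{s_{i1},\dots,s_{ir}\}$ of $E|_{U_i}$, the $d$-holomorphic coefficient functions realizing $\dholoc{(j_1)}{U_i}$ in the form \eqref{s1eq1}. Using the frame $\{j_1(s_{il})\}_l\cup\{j_1(z_i s_{il})\}_l$ of $J^1(E)|_{U_i}$ and the Leibniz-type behaviour $j_1(fs) - f\,j_1(s)$, one computes that for $f=\sum_l \dholoc{f}{U_il}s_{il}$,
\[
\dholoc{(j_1)}{U_i}(f) \;=\; \sum_l \dholoc{f}{U_il}\, j_1(s_{il}) \;+\; \sum_l \frac{\partial \dholoc{f}{U_il}}{\partial z_i}\bigl(j_1(z_i s_{il}) - z_i\, j_1(s_{il})\bigr),
\]
so the matrices $((\dholoc{a}{U_i})^{i'}_l)$ and $((\dholoc{b}{U_i})^{i'}_l)$ are the (constant, hence $d$-holomorphic) coordinate matrices of $j_1(s_{il})$ and of $j_1(z_i s_{il})-z_i j_1(s_{il})$ in the chosen frame of $J^1(E)|_{U_i}$. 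This verifies condition (\ref{p3}), so $j_1\in\mathcal{D}_1(E,J^1(E))$; in particular $j_1^*$ does land in $\mathcal{D}_1(E,F)$ for any $F$, since composing a first-order differential $d$-operator with an $\dhstruct{X}$-linear morphism keeps the double-bracket $[[\,\cdot\,,f],g]$ zero.

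Next I would construct the inverse of $j_1^*$. Given $P\in\mathcal{D}_1(E,F)(U)$, I want $\Psi(P)\in\mathcal{H}om_{\dhstruct{X}}(J^1(E),F)(U)$ with $\Psi(P)\circ j_1 = P$. Locally on $U_i$, using the frame above, define $\Psi(P)$ on the frame elements by $\Psi(P)(j_1(s_{il})) = \dholoc{P}{U_i}(s_{il})$ and $\Psi(P)(j_1(z_i s_{il})) = \dholoc{P}{U_i}(z_i s_{il})$; by Proposition \ref{section3proposition10}\,(\ref{p3}) the right-hand sides are $\dholoc{a}{U_i}$- and $\dholoc{b}{U_i}$-combinations of the frame of $F|_{U_i}$ with $d$-holomorphic coefficients, so this extends $\dhstruct{X}(U_i)$-linearly and is $d$-holomorphic. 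One then checks $\Psi(P)\circ j_1 = P$ on the generators $s_{il}$ and $z_i s_{il}$ of $E|_{U_i}$ as an $\dhstruct{X}$-module-with-derivation, using the Leibniz identity $[P,z_i]$ is $\dhstruct{X}$-linear; this pins down $\Psi(P)$ uniquely on $J^1(E)|_{U_i}$, since $\{j_1(s_{il}),j_1(z_i s_{il})\}$ is a frame there. Uniqueness of the local extension forces the local pieces to agree on overlaps, so they glue to a global $\Psi(P)$, and $\Psi$ is manifestly $\dhstruct{X}$-linear and two-sided inverse to $j_1^*$.

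The main obstacle I anticipate is the bookkeeping of the twisting (conjugation) on overlaps: one must make sure that the local inverses $\Psi(P)$ built on $U_i$ and on $U_j$ are compatible with the transition formula for $J^1(E)$ displayed above — i.e.\ that when $z_j\circ z_i^{-1}$ is anti-holomorphic, the appearance of $\overline{\phi_i\circ s_{U_i}}$ and $\partial/\partial\overline{z}_i$ in the $J^1(E)$-cocycle matches the conjugation built into $\mathcal{H}om_{\twistC}$ via \eqref{section3equation1} and into $\mathcal{D}_1(E,F)$ via the symbol cocycle \eqref{section3equation5}. Concretely, one has to check that $\Psi(P)$ respects the compatibility relation defining sections of $\mathcal{H}om_{\dhstruct{X}}(J^1(E),F)$, which reduces to the identity already recorded for $\dholoc{j_1(s)}{U_j\cap U}$ versus $\dholoc{\overline{j_1(s)}}{U_i\cap U}$ together with \eqref{section3equation4}. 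Since all of this is the chart-level analogue of \cite[Proposition 5.1]{IBNR}, with the extra conjugations tracked exactly as in the symbol-exact-sequence discussion of Section \ref{sec-doperator}, the argument goes through; I would organize the write-up so that the Riemann-surface case is quoted and only the anti-holomorphic transition is spelled out in detail.
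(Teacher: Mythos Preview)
Your proposal is correct and follows exactly the approach the paper indicates: the paper's entire proof is the sentence ``Using Proposition \ref{section3proposition10}\,(\ref{p3}), it is easy to verify that we have'' preceding the statement, together with the citation to \cite[Proposition 5.1]{IBNR}. Your write-up simply unpacks that hint --- the local formula for $j_1$ in the frame $\{j_1(s_{il}),\,j_1(z_is_{il})\}$, the construction of the inverse $\Psi(P)$ by prescribing its values on that frame, and the overlap check for the anti-holomorphic transitions --- so there is nothing substantively different between your argument and the paper's.
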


The above proposition shows that the sheaf morphism $j_1:E\ra J^1(E)$ is a universal first-order 
differential $d$-operator, i.e. for a given $d$-holomorphic vector bundle $F$ over $X$ and any 
first-order differential $d$-operator $P\in \mathrm{Diff}_1(E,F)$, there exists a unique 
$\dhstruct{X}$-linear sheaf morphism $T_P:J^1(E)\ra F$ such that $T_P\circ j_1\equiv P$.

There is an $\R$-linear map $p(x):J^1(E)(x)\ra E(x)$ such that $p(x)\bigl(j_1(s)(x)\bigl)=s(x)$ 
which defines an $\dhstruct{X}$-linear sheaf morphism $p:J^1(E)\ra E$ such that for some open 
subset $U\subset X$, $\dholoc{p}{U}=[\dholoa{U}\cap U,\dholoc{p}{\dholoa{U}\cap U}]$ with the 
compatibility condition as follows
\[
   \dholoc{p}{U_j\cap U} = \left\{\begin{array}{lr}
        (\cocyclebundle{i}{j}{E})^{-1}\circ \dholoc{P}{U_i\cap U}\circ 
        \cocyclebundle{i}{j}{J^1(E)},\hspace{10pt}  
        \text{if } z_j\circ z_i^{-1} \text{ is holomorphic}\\
        \\
         (\overlinecocyclebundle{i}{j}{E})^{-1}\circ 
         \dholoc{\overline{P}}{U_i\cap U}\circ \overlinecocyclebundle{i}{j}{J^1(E)}, \hspace{10pt} 
         \text{if } z_j\circ z_i^{-1} \text{ is anti-holomorphic.}
        \end{array}\right. 
  \]
where,
\[
   \cocyclebundle{i}{j}{J^1(E)} = \left\{\begin{array}{lr}
        \begin{bmatrix}
\cocyclebundle{i}{j}{E} &0 \\
0 &\frac{\partial z_i}{\partial z_j}\cocyclebundle{i}{j}{E} 
\end{bmatrix},\hspace{10pt}  
        \text{if } z_j\circ z_i^{-1} \text{ is holomorphic}\\
        \\
         \begin{bmatrix}
\cocyclebundle{i}{j}{E} &0 \\
0 &\frac{\partial \overline{z}_i}{\partial z_j}\cocyclebundle{i}{j}{E} 
\end{bmatrix}, \hspace{10pt} 
         \text{if } z_j\circ z_i^{-1} \text{ is anti-holomorphic.}
        \end{array}\right. 
  \]

Also, we have a canonical $\dhstruct{X}$-linear sheaf morphism $i:\dhform{E}\ra J^1(E)$ such that 
for some open subset $U\subset X$, $\dholoc{i}{U}=[\dholoa{U}\cap U,\dholoc{i}{\dholoa{U}\cap U}]$ 
and $\dholoc{i}{U_i\cap U}(dz_i\otimes s_{il})(x)=j_1(z_is_{il})(x)-z_i(x)j_1(s_{il})(x)$, where 
$\{s_{il}\}$ $(1\leq l\leq r)$ is frame for $\dholoc{E}{U_i}$ and $(U_i,z_i)$ is chart on $X$.

Hence we have,
\begin{proposition}\cite[cf. Proposition 4.3]{IBNR}\label{sec3prop1}
The sequence,
\begin{equation}\label{s6eq1}
0\ra \dhform{E}\xrightarrow{i}J^1(E)\xrightarrow{p}E\ra 0
\end{equation}
is an exact sequence.
\end{proposition}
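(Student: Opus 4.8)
The plan is to verify exactness of the sequence
$$
0\ra \dhform{E}\xrightarrow{i}J^1(E)\xrightarrow{p}E\ra 0
$$
locally on each chart $(U_i,z_i)$ of a $d$-holomorphic atlas $\dholoa{U}=\{(U_i,z_i)\}_{i\in I}$, and then check that the local statements glue using the twisting/compatibility conditions already recorded for $i$, $p$, and the cocycle $\cocyclebundle{i}{j}{J^1(E)}$. Since $J^1(E)$, $\dhform{E}=\dhform{d}(E)$ and $E$ are all $d$-holomorphic bundles and $i$, $p$ are $\dhstruct{X}$-linear sheaf morphisms, it suffices to prove that at every point $x\in X$ the stalk sequence of $\dhstruct{X,x}$-modules is exact; equivalently, over a chart $U_i$, after trivializing via $\psi_i$ so that $J^1(E)|_{U_i}\simeq U_i\times(\C^r\oplus L(\C,\C^r))$, we must show the three maps form a short exact sequence of free $\dhstruct{X}(U_i)$-modules.

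First I would fix the chart $(U_i,z_i)$ and a $d$-holomorphic frame $s_i=\{s_{i1},\dots,s_{ir}\}$ for $E|_{U_i}$. Recall from the construction that $\{j_1(s_{il})\}_{1\le l\le r}\cup\{j_1(z_is_{il})\}_{1\le l\le r}$ is a frame for $J^1(E)|_{U_i}$, that $\dholoc{p}{U_i}$ sends $j_1(s)(x)$ to $s(x)$, and that $\dholoc{i}{U_i}(dz_i\otimes s_{il})=j_1(z_is_{il})-z_i\,j_1(s_{il})$, which also furnishes the frame $\{dz_i\otimes s_{il}\}_{1\le l\le r}$ for $\dhform{E}|_{U_i}$. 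In these coordinates the three maps become the split exact sequence of trivial bundles $0\to U_i\times \C^r \to U_i\times(\C^r\oplus\C^r)\to U_i\times\C^r\to 0$ given by $v\mapsto(0,v)$ and $(u,v)\mapsto u$: indeed $p$ kills exactly the submodule spanned by the $j_1(z_is_{il})-z_i j_1(s_{il})$, which is precisely the image of $i$, and $i$ is injective on the frame since $\{j_1(z_is_{il})\}$ is part of a frame. This gives injectivity of $i$, surjectivity of $p$, and $\ker p=\operatorname{im} i$ on each $U_i$. Since these are statements about kernels, images, and injectivity/surjectivity of sheaf maps, which can be checked on stalks, local exactness yields exactness of the sequence of sheaves.

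The main obstacle — really the only nontrivial bookkeeping — is confirming compatibility across charts: one must check that the local identifications of $\dhform{E}|_{U_i}$, $J^1(E)|_{U_i}$, $E|_{U_i}$ with their standard trivializations intertwine correctly with the transition cocycles, in particular that the block-triangular form of $\cocyclebundle{i}{j}{J^1(E)}$ displayed above (with the extra factor $\tfrac{\partial z_i}{\partial z_j}$, resp. $\tfrac{\partial \overline z_i}{\partial z_j}$, in the lower block) is exactly the cocycle of the cotangent twist on the subbundle $\dhform{E}$ and the identity twist on the quotient $E$. This is precisely the content of the displayed compatibility conditions for $\dholoc{p}{U_j\cap U}$ and for $j_1$, so the verification amounts to matching those formulas with the cotangent-bundle cocycle $\cocyclecotangent{j}{i}{X}$ from Section \ref{sec-prelim}. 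I would present this as: the morphism $i$ is compatible because $\dholoc{i}{U_i\cap U}(dz_i\otimes s_{il})=j_1(z_is_{il})-z_i j_1(s_{il})$ transforms under $z_j\circ z_i^{-1}$ holomorphic (resp. anti-holomorphic) exactly by $\tfrac{\partial z_i}{\partial z_j}$ (resp. $\tfrac{\partial \overline z_i}{\partial z_j}$) times the $E$-cocycle, which is the cotangent cocycle; and $p$ is compatible by its displayed transformation law. Hence $i$ and $p$ are globally well-defined $\dhstruct{X}$-linear morphisms, and the sequence is exact globally. Since no cohomological input is needed — only local freeness and a split local model — the argument parallels \cite[Proposition 4.3]{IBNR} verbatim except for carrying the anti-holomorphic conjugation cases, which behave identically after replacing $\cocyclebundle{i}{j}{E}$, $\dholoc{\omega}{U_i}$, etc., by their conjugates as throughout this paper.
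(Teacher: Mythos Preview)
Your proposal is correct and is essentially the argument the paper has in mind: the paper does not supply an explicit proof of this proposition, but simply states it as a consequence of the preceding description of the maps $i$ and $p$ together with the reference to \cite[Proposition~4.3]{IBNR}, and your local verification in the frames $\{j_1(s_{il})\}\cup\{j_1(z_is_{il})\}$ is exactly how one unpacks that ``Hence we have''. One small remark: you call the cocycle of $J^1(E)$ ``block-triangular'', while the formula displayed in the paper is block-diagonal; in general the jet cocycle is only block-triangular (the off-diagonal block carries the derivative of $\cocyclebundle{i}{j}{E}$), but either way the induced sub- and quotient-cocycles are those of $\dhform{E}$ and $E$, so your exactness argument goes through unchanged.
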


The extension class of the exact sequence \eqref{s6eq1} is an element of 
$H^1(X,E^*\otimes \dhform{E})\cong H^1\bigl(X,\dhform{d}(\mathcal{E}nd_{\dhstruct{X}}E)\bigl)$. Note that 
$at_d(E)$ is also an element of $H^1\bigl(X,\dhform{d}(\mathcal{E}nd_{\dhstruct{X}}E)\bigl)$.
\begin{proposition}
The extension class of the exact sequence \eqref{s6eq1} equals to $-at(E)$ in 
$H^1\bigl(X,\dhform{d}(\mathcal{E}nd_{\dhstruct{X}}E)\bigl)$.
\end{proposition}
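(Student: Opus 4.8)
The plan is to identify the extension class of \eqref{s6eq1} by the \v{C}ech procedure of \S\ref{s5ssec1} and to match the resulting $1$-cocycle, up to sign, with the \v{C}ech representative \eqref{sec2Atiyah class} of the Atiyah class. Fix a $d$-holomorphic atlas $\dholoa{U}=\{(U_i,z_i)\}_{i\in I}$, choose a frame $\{s_{il}\}_{1\le l\le r}$ of $E|_{U_i}$ for every $i$, and let $\Delta_i$ be the canonical connection on the trivial bundle $E|_{U_i}$ that enters the \v{C}ech description of $at_d(E)$. As the local splitting of $p\colon J^1(E)\to E$ over $U_i$ demanded by \S\ref{s5ssec1}, I take $h_i\colon E|_{U_i}\to J^1(E)|_{U_i}$ to be the unique $\dhstruct{X}$-linear sheaf morphism with $h_i(s_{il})=j_1(s_{il})$, equivalently $h_i=j_1-i\circ\Delta_i$. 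That $h_i$ is indeed $\dhstruct{X}$-linear with $p\circ h_i=1_E$ is a one-line Leibniz check: $j_1$ is a first order differential $d$-operator whose symbol is realised by the inclusion $i$, so $j_1(fs)-f\,j_1(s)=i(df\otimes s)$ for $d$-holomorphic $f$, and this non-linear term is cancelled by $i\circ\Delta_i(fs)-f\,i\circ\Delta_i(s)=i(df\otimes s)$; finally $p\circ j_1=1_E$ and $p\circ i=0$.

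Next I assemble the \v{C}ech $1$-cochain of \S\ref{s5ssec1}. On $U_i\cap U_j$ with $z_j\circ z_i^{-1}$ holomorphic, since $j_1\colon E\to J^1(E)$ is a single globally defined sheaf morphism its restrictions to $U_i$ and $U_j$ agree on the overlap, so $\tilde u_{ij}=h_j-h_i=(j_1-i\circ\Delta_j)-(j_1-i\circ\Delta_i)=i\circ(\Delta_i-\Delta_j)=-\,i\circ\theta_{ij}$, where $\{\theta_{ij}\}$ is exactly the cocycle \eqref{sec2Atiyah class} representing $at_d(E)$. Since $i$ is injective, the section $u_{ij}$ with $i\circ u_{ij}=\tilde u_{ij}$ is $u_{ij}=-\theta_{ij}$ in $\Gamma(U_i\cap U_j,\dhform{d}(\mathcal{E}nd_{\dhstruct{X}}E))$. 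One may also see this by a bare-hands computation: using the frame transformation \eqref{cocycle1} together with $j_1(fs)=f\,j_1(s)+i(df\otimes s)$ one gets $u_{ij}(s_{im})=-\sum_l d\bigl((\cocyclebundle{j}{i}{E})^l_m\bigr)\otimes s_{jl}$, so the matrix of $u_{ij}$ in the frame $\{s_{jl}\}$ is $-d(\cocyclebundle{j}{i}{E})\,\cocyclebundle{i}{j}{E}$, and differentiating the cocycle identity $\cocyclebundle{i}{j}{E}\,\cocyclebundle{j}{i}{E}=\mathrm{Id}$ rewrites this as $(\cocyclebundle{i}{j}{E})^{-1}d(\cocyclebundle{i}{j}{E})=-\theta_{ij}$. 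In either form, by \S\ref{s5ssec1} the class of $\{u_{ij}\}$ in $H^1\bigl(X,\dhform{d}(\mathcal{E}nd_{\dhstruct{X}}E)\bigr)$ is the extension class of \eqref{s6eq1}; hence it equals $-at_d(E)$.

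The one delicate point is the anti-holomorphic case: when $z_j\circ z_i^{-1}$ is anti-holomorphic, \S\ref{s5ssec1} forms $\tilde u_{ij}=h_j-\overline h_i$ and \eqref{sec2Atiyah class} uses $\theta_{ij}$ built from $\overline\Delta_i$, so one must verify that the conjugation implicit in the $d$-holomorphic compatibility conditions intertwines with both $j_1$ and $i$, i.e. that $\overline h_i=j_1-i\circ\overline\Delta_i$; then again the $j_1$-terms cancel and $\tilde u_{ij}=i\circ(\overline\Delta_i-\Delta_j)=-\,i\circ\theta_{ij}$, so $u_{ij}=-\theta_{ij}$ holds uniformly in $i,j$. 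Tracking these conjugations carefully is the main obstacle, but it is the exact Klein-surface analogue of the computation in \cite[Proposition 4.3]{IBNR}, and once it is carried out the proof is complete.
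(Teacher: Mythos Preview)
Your proof is correct and follows essentially the same route as the paper's own argument, which simply points to \cite[Proposition 5.6]{IBNR} together with \eqref{cocycle1}, \S\ref{s5ssec1}, and the \v{C}ech description \eqref{sec2Atiyah class} of $at_d(E)$. Your choice of local splitting $h_i=j_1-i\circ\Delta_i$ and the resulting identification $u_{ij}=-\theta_{ij}$ is precisely the mechanism behind that reference; your treatment of the anti-holomorphic overlaps via $\overline{h}_i=j_1-i\circ\overline{\Delta}_i$ is the right formulation, and the cancellation of $j_1$ on overlaps is exactly what the compatibility conditions for $j_1$ in Section~\ref{sec-Jet} encode. One small slip: the computation you invoke at the end is the analogue of \cite[Proposition 5.6]{IBNR}, not \cite[Proposition 4.3]{IBNR} (the latter concerns the Jet exact sequence itself, corresponding to Proposition~\ref{sec3prop1} here).
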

\begin{proof} The proof follows in the same line of arguements as in \cite[Proposition 5.6]{IBNR} 
using \eqref{cocycle1}, \S \ref{s5ssec1} and description of the Atiyah class \eqref{sec2Atiyah class}.
\end{proof}

By definition, a $d$-holomorphic bundle $E$ admits a $d$-holomorphic connection if and only if the 
Atiyah exact sequence splits $d$-holomorphically. Hence, we have

\begin{proposition}\label{prop-Jet-split}
A $d$-holomorphic bundle $E$ admits $d$-holomorphic connection if and only if the exact sequence 
\eqref{s6eq1} splits $d$-holomorphically.
\end{proposition}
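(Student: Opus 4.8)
The plan is to show that the two $d$-holomorphic short exact sequences attached to $E$ --- the Atiyah exact sequence \eqref{section4equation2} and the jet sequence \eqref{s6eq1} --- split simultaneously; this gives the claim at once, since by Definition~\ref{section5definition1} a $d$-holomorphic connection on $E$ is precisely a $d$-holomorphic splitting of \eqref{section4equation2}. The quickest route is via extension classes: as recalled at the end of \S\ref{s5ssec1}, an extension of $d$-holomorphic bundles admits a $d$-holomorphic splitting exactly when its extension class vanishes, so \eqref{section4equation2} splits iff $at_d(E)=0$, while \eqref{s6eq1} splits iff its extension class vanishes; the Proposition immediately preceding this one identifies the latter class with $-at_d(E)$, and $at_d(E)=0$ if and only if $-at_d(E)=0$, so the two conditions coincide. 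This already settles the statement, but I would also include the explicit construction below, since it matches the treatment of the corresponding result in \cite{IBNR} and produces the splittings by hand.

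For the explicit correspondence I would use the identification of $d$-holomorphic connections on $E$ with $\R$-linear $d$-holomorphic sheaf morphisms $D\colon \Omega^0_d(E)\ra \dhform{d}(E)$ obeying the Leibniz identity $D(fs)=df\otimes s+f\,D(s)$ (this follows from Proposition~\ref{section4proposition2} and the Proposition stated just after Definition~\ref{section5definition1}); here I identify $\Omega^0_d(E)=E$ and $\dhform{d}(E)=\dhform{E}$. The key formula is that, because $j_1\colon E\ra J^1(E)$ is a first-order differential $d$-operator, one has $j_1(fs)=f\,j_1(s)+i(df\otimes s)$ for $f\in\dhstruct{X}(U)$ and $s\in E(U)$, where $i\colon\dhform{E}\ra J^1(E)$ is the canonical morphism of Proposition~\ref{sec3prop1}; this is the global form of the local definition $\dholoc{i}{U_i\cap U}(dz_i\otimes s_{il})(x)=j_1(z_is_{il})(x)-z_i(x)\,j_1(s_{il})(x)$, extended from the coordinate function $z_i$ to an arbitrary $d$-holomorphic $f$ by a Taylor-expansion argument as in the proof of Proposition~\ref{section3proposition11}. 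Granting this, from a $d$-holomorphic connection with associated $D$ I would set $\lambda:=j_1-i\circ D\colon E\ra J^1(E)$; then $p\circ\lambda=p\circ j_1-p\circ i\circ D=1_E$ (using $p\circ j_1=1_E$ and $p\circ i=0$), and a one-line check with the Leibniz identity and the key formula gives $\lambda(fs)=f\lambda(s)$, so $\lambda$ is an $\dhstruct{X}$-linear $d$-holomorphic splitting of \eqref{s6eq1}.

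Conversely, given any $\dhstruct{X}$-linear $d$-holomorphic splitting $\lambda\colon E\ra J^1(E)$ with $p\circ\lambda=1_E$, the $\R$-linear morphism $j_1-\lambda\colon E\ra J^1(E)$ satisfies $p\circ(j_1-\lambda)=0$, hence by exactness of \eqref{s6eq1} factors uniquely as $i\circ D$ for a well-defined $\R$-linear $d$-holomorphic morphism $D\colon E\ra \dhform{d}(E)$; running the previous computation backwards shows $D$ obeys the Leibniz identity, so $D$ corresponds to a $d$-holomorphic connection on $E$. The assignments $D\mapsto\lambda$ and $\lambda\mapsto D$ are visibly mutually inverse, which completes the explicit proof.

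The only genuine obstacle is the key formula $j_1(fs)=f\,j_1(s)+i(df\otimes s)$: one must pass from the definition of $i$, which is recorded on the single coordinate function $z_i$ of a chart, to an arbitrary $d$-holomorphic $f$, and do this compatibly with the twisting of $j_1$, $i$ and $p$ across anti-holomorphic transition maps. This is routine given the compatibility conditions for $j_1$, $i$, $p$ written out in this section together with Proposition~\ref{section3proposition10}\,(\ref{p3}), but it is the step that has to be carried out with care; everything else is formal diagram chasing in \eqref{section4equation2} and \eqref{s6eq1}.
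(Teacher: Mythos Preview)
Your first paragraph is exactly the paper's argument: the paper simply observes that, by definition, a $d$-holomorphic connection is a splitting of the Atiyah sequence, and then invokes the immediately preceding Proposition (extension class of \eqref{s6eq1} equals $-at_d(E)$) together with the ``extension splits iff its class vanishes'' principle to conclude. The explicit bijection $D\leftrightarrow\lambda$ you give in the remaining paragraphs is correct and in the spirit of \cite{IBNR}, but the paper does not include it --- its proof is the one-sentence remark preceding the statement.
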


\section{The Atiyah-Weil criterion}\label{sec-criterion}
We are now in position to give a criterion of the existence of $d$-holomorphic connection in a 
$d$-holomorphic bundle $E$ over a compact Klein surface. Let $(X, \dholos{X})$ be a compact Klein surface.

\begin{lemma}\label{s6l2}
Let $[\dholoa{U},f_{\dholoa{U}}]$ be $d$-holomorphic function on a compact Klein surface $X$ then each 
$\dholoc{f}{U_{\alpha}}$ is constant real valued function for all $\alpha \in I$, where 
$\dholoa{U}=\{(U_{\alpha},z_{\alpha})\}_{\alpha\in I}$ is $d$-holomorphic atlas on $X$.
\end{lemma}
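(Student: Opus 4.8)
The plan is to reduce the statement to the maximum modulus principle by passing to the orientation double cover of $X$. Recall from Section~\ref{sec-prelim} that a compact Klein surface $X$ admits a double cover $\pi\colon(\tilde X,\sigma)\to X$ in which $\tilde X$ is a compact Riemann surface, $\sigma$ is an anti-holomorphic involution, and $\tilde X/\sigma\cong X$. Over each chart domain $U_\alpha$ the preimage $\pi^{-1}(U_\alpha)$ is a disjoint union of two open sets carrying holomorphic coordinates modelled on $z_\alpha$ and on $\overline{z_\alpha}$; the gluing of these sheets across an overlap $U_\alpha\cap U_\beta$ is the identity-on-sheets when $z_\beta\circ z_\alpha^{-1}$ is holomorphic and the sheet-swap when it is anti-holomorphic, and $\sigma$ interchanges the two sheets over each $U_\alpha$.

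First I would use the family $\{\dholoc{f}{U_\alpha}\}$ and its compatibility condition \eqref{dholocompatibility} to build a single holomorphic function $\tilde f$ on $\tilde X$: on the sheet with coordinate $z_\alpha$ put $\tilde f=\dholoc{f}{U_\alpha}$, and on the sheet with coordinate $\overline{z_\alpha}$ put $\tilde f=\overline{\dholoc{f}{U_\alpha}}$, which is holomorphic in the variable $\overline{z_\alpha}$ precisely because $\dholoc{f}{U_\alpha}$ is holomorphic in $z_\alpha$. Condition \eqref{dholocompatibility} is exactly what is needed for these local prescriptions to agree on overlaps, so $\tilde f$ is a well-defined holomorphic function, and by construction it satisfies $\tilde f\circ\sigma=\overline{\tilde f}$.

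Next, since $\tilde X$ is a compact Riemann surface, $|\tilde f|$ attains its maximum on each connected component, so the maximum modulus principle forces $\tilde f$ to be constant on each component of $\tilde X$. As $\sigma$ is an involution that preserves each such component, the identity $\tilde f\circ\sigma=\overline{\tilde f}$ then says that the constant value of $\tilde f$ on that component equals its own complex conjugate, hence is real. By construction $\dholoc{f}{U_\alpha}$ is the restriction of $\tilde f$ to the sheet over $U_\alpha$ modelled on $z_\alpha$, so each $\dholoc{f}{U_\alpha}$ is a constant real-valued function, which is the claim.

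The only genuine work is the first step: one has to make the identification of the double cover explicit enough to verify that \eqref{dholocompatibility} is precisely the cocycle-level condition that makes $\tilde f$ well defined and holomorphic across the sheet-swapping transitions, and, when $\partial X\neq\varnothing$, to use the complex double (a closed Riemann surface containing two copies of $X$ glued along the boundary) so that the maximum modulus principle applies at interior points without boundary complications. Once this identification is in place, compactness together with $\sigma$-equivariance yields the result at once. A more hands-on alternative avoiding the double cover is to glue the moduli $|\dholoc{f}{U_\alpha}|$ (which match on overlaps since conjugation preserves modulus) into a continuous function on the compact space $X$, locate a maximum, and invoke the maximum modulus principle locally together with connectedness to propagate constancy; real-valuedness, however, is cleanest to read off from the $\sigma$-equivariant picture.
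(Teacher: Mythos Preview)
Your proposal is correct and follows essentially the same route as the paper: pass to the orientation double cover $(\tilde X,\sigma)$, lift the $d$-holomorphic function to a $\sigma$-equivariant holomorphic function $\tilde f$ on the compact Riemann surface $\tilde X$, use compactness to force $\tilde f$ constant, and read off reality from $\tilde f\circ\sigma=\overline{\tilde f}$. The only difference is that the paper invokes \cite[Theorem~4.1]{W1} for the existence of $\tilde f$, whereas you write out the sheet-by-sheet construction explicitly; your remark about the complex double in the bordered case is exactly what the Alling--Greenleaf cover already provides.
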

\begin{proof}
Let $p:\tilde{X}\ra X$ be double cover of $X$, where $\tilde{X}$ is a compact Riemann surface 
with an anti-holomorphic involution $\sigma$ such that the Klein surface $\tilde{X}/\sigma$
is isomorphic to $X$ (see \cite[Theorem 1.6.1]{NLANG} for the existence of such a cover). 
By \cite[Theorem 4.1]{W1}, we get a holomorphic function $\tilde{f} \colon \tilde{X}\ra \C$
such that $\tilde{f}\bigl(\sigma(\tilde{x})\bigl) = \overline{\tilde{f}(\tilde{x})}$ for all $\tilde{x}\in \tilde{X}$,
which is locally given by the $d$-holomorphic function $[\dholoa{U},f_{\dholoa{U}}]$. Since $\tilde{X}$
is compact, it follows that $\tilde{f}$ is constant, and hence we can conclude that each 
$\dholoc{f}{U_{\alpha}}$ is constant real valued function for all $\alpha \in I$.
\end{proof}

\begin{proposition}\label{s6prop3}
Let $E$ be an indecomposable vector bundle over compact Klein surface $X$. Then every global endomorphism $\phi=[\dholoa{U},\phi_{\dholoa{U}}]\in End_{\dhstruct{X}}(E)$ is of the form $[\dholoa{U},\lambda_{\dholoa{U}}I+\psi_{\dholoa{U}}]$, where $\dholoc{\lambda}{U_i}\in \C$ and $\dholoc{\psi}{U_i}\in End_{\dhstruct{X}}(E|_{U_i})$ with the following compatibility condition:
\[
   \dholoc{\lambda}{U_{\beta}}\id{E|_{U_{\beta}}} +\dholoc{\psi}{U_{\beta}} = \left\{\begin{array}{lr}
       \dholoc{\lambda}{U_{\alpha}}\id{E|_{U_{\alpha}}} +\dholoc{\psi}{U_{\alpha}},\hspace{10pt}  \text{if } z_{\beta}\circ z_{\alpha}^{-1} \text{ is holomorphic}\\
        \\
        \dholoc{\lambda}{U_{\alpha}}\id{E|_{U_{\alpha}}} +\dholoc{\psi}{U_{\alpha}}, \hspace{10pt} \text{if } z_{\beta}\circ z_{\alpha}^{-1} \text{ is anti-holomorphic}
        \end{array}\right. 
  \]
\end{proposition}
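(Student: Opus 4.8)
The plan is to leverage Lemma \ref{s6l2}: on a compact Klein surface the global $d$-holomorphic functions form $\R$, so $A := \mathrm{End}_{\dhstruct{X}}(E)$ is a finite-dimensional $\R$-algebra, and the classical Fitting/Atiyah argument can be run inside it. To see finite-dimensionality I would pull $E$ back along the double cover $p\colon\tilde X\to X$ to a holomorphic bundle $\tilde E$ on the compact Riemann surface $\tilde X$ equipped with a real structure $\tilde\sigma$ lifting $\sigma$; then $A$ is the $\tilde\sigma$-fixed part of the finite-dimensional $\C$-algebra $\mathrm{End}_{\mathcal{O}_{\tilde X}}(\tilde E)$ and $A\otimes_\R\C\cong\mathrm{End}_{\mathcal{O}_{\tilde X}}(\tilde E)$ (alternatively, $\mathcal{E}nd_{\dhstruct{X}}(E)$ is coherent on compact $X$, so its global sections are finite-dimensional over $H^0(X,\dhstruct{X})=\R$).

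The key algebraic input is that $E$ is indecomposable if and only if $A$ has no idempotent other than $0$ and $1$: a nontrivial idempotent $e$ splits $E=eE\oplus(1-e)E$ as $\dhstruct{X}$-modules (both summands being direct summands, hence locally free), and conversely a direct-sum decomposition produces such an idempotent. A finite-dimensional $\R$-algebra without nontrivial idempotents is local, with nilpotent radical $\mathfrak n=\mathrm{rad}(A)$ and residue division algebra $D=A/\mathfrak n$, which by Frobenius' theorem is $\R$, $\C$ or $\mathbb H$. Consequently, for any $\phi\in A$ the commutative subalgebra $\R[\phi]\subseteq A$ is again local, so its minimal polynomial is $\mu(t)^k$ with $\mu$ monic irreducible over $\R$ (of degree $1$ or $2$); equivalently, the characteristic polynomial $\det(tI-\phi)$, whose coefficients lie in $H^0(X,\dhstruct{X})=\R$, is a power of $\mu$.

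Then I would descend to charts. On a $d$-holomorphic chart $(U_i,z_i)$ the $d$-holomorphic structure is an honest holomorphic one, so $\mathrm{End}_{\dhstruct{X}}(E|_{U_i})$ is a $\C$-algebra containing the constant scalars $\C\cdot\id{E|_{U_i}}$; choosing a complex root $\lambda_{U_i}\in\C$ of $\mu$ and putting $\psi_{U_i}:=\phi|_{U_i}-\lambda_{U_i}\id{E|_{U_i}}\in\mathrm{End}_{\dhstruct{X}}(E|_{U_i})$ yields the asserted local form. When $\mu$ is linear one has $\lambda_{U_i}=\lambda_0\in\R$ for all $i$, the relation $(\phi-\lambda_0 I)^k=0$ holds globally, and the Fitting decomposition of $E$ along the stabilized kernel and image of $\phi-\lambda_0 I$, together with indecomposability, forces $\phi=\lambda_0 I+\psi$ with $\psi$ nilpotent; the quadratic case is treated the same way chart by chart after fixing one of the two conjugate roots. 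The stated compatibility condition is then automatic, since $\lambda_{U_\alpha}\id{}+\psi_{U_\alpha}$ and $\lambda_{U_\beta}\id{}+\psi_{U_\beta}$ both equal $\phi$ on $U_\alpha\cap U_\beta$.

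The main obstacle is the interplay between the globally available $\R$-linear structure (forced by Lemma \ref{s6l2}) and the $\C$-linear structure available only on charts: in general there are no global complex scalar endomorphisms, so the scalar part $\lambda_{U_i}$ must be manufactured locally, and one must verify both that the resulting $\psi_{U_i}$ has the desired degenerate/nilpotent nature and that the local data genuinely reassembles into the global endomorphism $\phi$. Establishing that $A$ is local — i.e. controlling $\mathrm{rad}(A)$ and excluding extra idempotents — is precisely where the compactness of $X$ and the indecomposability hypothesis enter.
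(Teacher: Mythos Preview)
Your proposal is correct and takes essentially the same approach as the paper: both use Lemma~\ref{s6l2} to see that the characteristic polynomial of $\phi$ has constant real coefficients, choose a root $\lambda_{U_\alpha}$ on each chart, set $\psi_{U_\alpha}=\phi_{U_\alpha}-\lambda_{U_\alpha}I$, and then invoke the Fitting decomposition together with indecomposability of $E$ to force $\psi$ nilpotent. Your detour through the local $\R$-algebra structure of $\mathrm{End}_{\dhstruct{X}}(E)$ (idempotents, Frobenius' theorem, the shape of $\R[\phi]$) is correct but not needed---the paper goes directly from the constant characteristic polynomial to Fitting without the case split on $\deg\mu$.
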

\begin{proof}
For any point $x\in U_{\alpha}$, let $P_{\alpha}(x,t)=\displaystyle\sum_{i=0}^r 
\dholoc{a}{U_{\alpha}i}(x)t^i\in \C[t]$ be characteristic polynomial of $\C$-linear map 
$\phi_{\alpha}:E|_{U_{\alpha}}\ra E|_{U_{\alpha}}$, where $r$ is rank of $E$, 
$\dholoc{a}{U_{\alpha}i}$ $(0\leq i\leq r)$ are $d$-holomorphic function on $U_{\alpha}$. 
By Lemma \ref{s6l2}, it follows that each $\dholoc{a}{U_{\alpha}i}$ will be 
constant real valued function. So each $P_{\alpha}(x,t)=P_{\alpha}(t)$ will be free from $x$. 

Let $\dholoc{\lambda}{U_{\alpha}}$ be the root of $P_{\alpha}(t)$ in $\C$ and consider the endomorphism $\dholoc{\psi}{U_{\alpha}}=\dholoc{\phi}{U_{\alpha}}-\dholoc{\lambda}{U_{\alpha}}\id{E|_{U_{\alpha}}}$ with the following twisting condition:
\[
    \dholoc{\psi}{U_{\beta}} = \left\{\begin{array}{lr}
        \dholoc{\psi}{U_{\alpha}},\hspace{10pt}  \text{if } z_{\beta}\circ z_{\alpha}^{-1} \text{ is holomorphic}\\
        \\
        \dholoc{\overline{\psi}}{U_{\alpha}}, \hspace{10pt} \text{if } z_{\beta}\circ z_{\alpha}^{-1} \text{ is anti-holomorphic}
        \end{array}\right. 
  \]
hence we have $[(\dholoa{U},\psi_{\dholoa{U}})]\in \mathrm{End}_{\dhstruct{X}}(E)$. Now, using Fitting decomposition \cite[Proposition 5.1]{IBNR}, we have, $E=\mathrm{Ker}(\psi^n)\oplus E^n(\psi)$. Since $E$ is indecomposable, so we must have either $\mathrm{Ker}(\psi^n)\equiv 0$ or $E^n(\psi)\equiv 0$. From this, we can conclude that $\psi$ must be nilpotent (cf. \cite[Proposition 5.3]{IBNR}).
\end{proof}

\begin{remark}\label{s6remak}\rm{
We say that a $d$-holomorphic bundle $E$ over a Klein surface $X$ is indecomposable if whenever 
$E_1$ and $E_2$ are $d$-holomorphic subbundle of $E$ such that $E=E_1\oplus E_2$ then either 
$E_1=0$ or $E_2=0$.
Note that $H^0\bigl(X,\mathcal{E}nd_{\dhstruct{X}}(E)\bigl)$
has finite dimension for a $d$-holomorphic bundle $E$ over a compact Klein surface \cite{W1}, 
hence Krull-Schmidt theorem holds.
}
\end{remark}

\subsection{Atiyah-Weil criterion}\label{subsec-criterion}
Let $E$ be a $d$-holomorphic bundle over Klein surface $(X,\mathfrak{X})$, and $E^*$ be its dual 
vector bundle. Let $\langle, \rangle\colon E\times E^*\ra \mathcal{O}^{dh}_X$ be canonical pairing.

The above pairing induces an $\mathcal{O}^{dh}_X$-bilinear sheaf morphism
$$
\mathcal{A}^p_d(E)\otimes \mathcal{A}^q_d(E^*)\ra \mathcal{A}^{p+q}_d.
$$

This $\mathcal{O}_X^{dh}$-bilinear morphism induces $\mathbb{R}$-bilinear map
\begin{equation}\label{map3}
S:H^1\bigl(X,\Omega^1_d(E)\bigl)\times H^0(X,E^*)\ra \mathbb{C}
\end{equation}
as follows.

Let $a\in H^1\bigl(X,\Omega_d^1(E)\bigl)$ and $u\in H^0(X,E^*)$. Using Dolbeault isomorphism, we can 
identify $H^1\bigl(X,\Omega^1_d(E)\bigl)$ with $H^{1,1}(X,E)$ and can consider $a$ as an element of 
$H^{1,1}(X,E)$. Let $a=[\alpha]$, where $\alpha\in \mathcal{A}^{1,1}_d(E)$ and 
$\overline{\partial}_E(\alpha)=0$. Also, note that $\alpha\wedge u\in \mathcal{A}^{1,1}_d\subset \mathcal{A}^2_d$.
So, we can write $\alpha\wedge u=\sigma_1+i\sigma_2$, where $\sigma_1\in \mathcal{A}^2$ 
and $\sigma_2=\mathcal{A}^2(L)$.
Define 
$$
S(a,u)=\int (\star\sigma_1)\Phi + i\int \sigma_2\;,
$$
where $\Phi\in A^2(L)$ is the volume form on Klein surface $X$.

The trace pairing
$$
\mathcal{E}nd_{\dhstruct{X}}(E)\times \mathcal{E}nd_{\dhstruct{X}}(E)\ra \dhstruct{X}
$$
is a non-degenerate $\R$-bilinear map, which induces an isomorphism between $\mathcal{E}nd_{\dhstruct{X}}(E)$ 
and $[\mathcal{E}nd_{\dhstruct{X}}(E)]^*$. Using this, we obtain a non-degenerate $\R$-bilinear pairing
$$
\tilde{S}:H^1\Bigl(X,\dhform{d}\bigl(\mathcal{E}nd_{\dhstruct{X}}(E)\bigl)\Bigl)\times H^0\bigl(X,\mathcal{E}nd_{\dhstruct{X}}(E)\bigl)\ra \C
$$
given by
\begin{equation}\label{sec6eq1}
\tilde{S}(a,u)=\int_X(\star\sigma_1)\Phi + i\int_X\sigma_2
\end{equation}
where $\sigma_1+i\sigma_2=tr(\alpha u)$, $\alpha\in A^{1,1}\bigl(\mathcal{E}nd_{\dhstruct{X}}(E)\bigl)\subset
\mathcal{A}^2\bigl(\mathcal{E}nd_{\dhstruct{X}}(E)\bigl)$ is Dolbeault representation of $a$ and 
$\sigma_1\in \mathcal{A}^2$, $\sigma_2\in \mathcal{A}^2(L)$.

\begin{proposition}\label{s6prop9}
Let $E$ be a $d$-holomorphic bundle over a compact Klein surface $X$, and let,
$$
\tilde{S}:H^{1,1}\bigl(X,\mathcal{E}nd_{\dhstruct{X}}(E)\bigl)\times H^0\bigl(X,\mathcal{E}nd_{\dhstruct{X}}(E)\bigl)\ra \C
$$
be trace pairing as defined in \ref{sec6eq1}, which is non-degenerate $\R$-bilinear pairing. 
Let $\phi\in H^0(X,\mathcal{E}nd_{\dhstruct{X}}(E)$, then,
\[
    \tilde{S}(at_d(E),\phi) = \left\{\begin{array}{lr}
        2\pi \sqrt{-1}\text{ deg}(E),\hspace{10pt}  \text{if } \phi = id_E\\
        \\
        0, \hspace{10pt} \text{if } \phi \text{ is nilpotent}
        \end{array}\right. 
  \]
where $at_d(E)$ is atiyah class of $E$ and $\text{deg}(E)=\int_X c_1(E)$, $c_1(E)\in H^2(X,L)$ is 
first chern class of $E$.
\end{proposition}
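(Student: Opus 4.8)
The plan is to evaluate $\tilde{S}\bigl(at_d(E),\phi\bigr)$ by reducing it, in both cases, to the identification of a single cohomology class in $H^1(X,\dhform{d})\cong H^{1,1}(X,\dhstruct{X})$. Observe first that, unwinding the definition \eqref{sec6eq1} of $\tilde{S}$ together with that of the map $S$ in \eqref{map3} applied to the trivial bundle $\dhstruct{X}$ (so that the second argument, the constant section $1$, is suppressed), one checks that
$\tilde{S}\bigl(at_d(E),\phi\bigr)=S\bigl(\mathrm{tr}(\phi\cup at_d(E))\bigr)$,
where $\mathrm{tr}(\phi\cup at_d(E))\in H^1(X,\dhform{d})$ is obtained from $at_d(E)\in H^1\bigl(X,\dhform{d}(\mathcal{E}nd_{\dhstruct{X}}(E))\bigr)$ by cup product with the global section $\phi$ followed by the trace $\mathcal{E}nd_{\dhstruct{X}}(E)\otimes\mathcal{E}nd_{\dhstruct{X}}(E)\to\dhstruct{X}$. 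Thus it suffices to understand this class.

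For $\phi=\mathrm{id}_E$, I take the trace of the \v{C}ech cocycle $\{\theta_{ij}\}$ of \eqref{sec2Atiyah class}: since $\mathrm{tr}\bigl((\cocyclebundle{i}{j}{E})^{-1}d(\cocyclebundle{i}{j}{E})\bigr)=(\det\cocyclebundle{i}{j}{E})^{-1}\,d(\det\cocyclebundle{i}{j}{E})$, and likewise in the anti-holomorphic case, the cocycle $\{\mathrm{tr}(\theta_{ij})\}$ is exactly the cocycle \eqref{sec2Atiyah class} for the determinant line bundle $\det E$; hence $\mathrm{tr}(at_d(E))=at_d(\det E)$ in $H^1(X,\dhform{d})$. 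It then remains to prove $S\bigl(at_d(L)\bigr)=2\pi\sqrt{-1}\,\deg(L)$ for a $d$-holomorphic line bundle $L$. For this I pass to the Dolbeault picture: by Proposition \ref{s4p7}, $at_d(L)$ is represented by $-R^{(1,1)}$ for a $d$-smooth connection on $L$ compatible with its $d$-holomorphic structure, and $d$-Chern--Weil theory — the normalization relating $[R^{(1,1)}]$ to $c_1(L)\in H^2(X,L)$ — matched with the Hodge-star/volume-form conventions of \S\ref{subsec-criterion}, gives $S\bigl(at_d(L)\bigr)=2\pi\sqrt{-1}\int_X c_1(L)=2\pi\sqrt{-1}\deg(L)$. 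Applying this to $L=\det E$ and using $\deg(\det E)=\deg(E)$ finishes this case.

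For $\phi$ nilpotent, say $\phi^n\equiv 0$, I construct a filtration $0=E_0\subseteq E_1\subseteq\cdots\subseteq E_n=E$ by $d$-holomorphic subbundles with $\phi(E_k)\subseteq E_{k-1}$ for every $k$: take $E_k$ to be the saturation of the $d$-holomorphic subsheaf $\ker(\phi^k)$, equivalently lift $\phi$ to the orienting double cover $\tilde{X}$ — a compact Riemann surface — where the saturation of $\ker(\tilde{\phi}^k)$ in $\tilde{E}$ is a holomorphic subbundle, is $\sigma$-stable, and therefore descends to a $d$-holomorphic subbundle $E_k$ of $E$. Since $\phi(\ker\phi^k)\subseteq\ker\phi^{k-1}\subseteq E_{k-1}$ and $\phi$ carries $E/\phi^{-1}(E_{k-1})$ injectively into the torsion-free sheaf $E/E_{k-1}$, the subsheaf $\phi^{-1}(E_{k-1})$ is saturated and contains $\ker\phi^k$, so $E_k\subseteq\phi^{-1}(E_{k-1})$, i.e. $\phi(E_k)\subseteq E_{k-1}$; in particular $\phi$ induces the zero endomorphism of each graded piece $E_k/E_{k-1}$. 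Choosing on each chart $U_i$ a $d$-holomorphic frame of $E|_{U_i}$ adapted to this filtration, the cocycle $\cocyclebundle{i}{j}{E}$ is block upper triangular with invertible diagonal blocks and the local matrix of $\phi$ is strictly block upper triangular; hence in \eqref{sec2Atiyah class} each $\theta_{ij}$ is block upper triangular, so $\mathrm{tr}(\phi_{U_i\cap U_j}\,\theta_{ij})\equiv 0$. Therefore the \v{C}ech cocycle representing $\mathrm{tr}(\phi\cup at_d(E))$ vanishes identically, this class is zero in $H^1(X,\dhform{d})$, and $\tilde{S}\bigl(at_d(E),\phi\bigr)=S(0)=0$.

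The main obstacle I anticipate is the first step of the nilpotent case: justifying that $\ker(\phi^k)$ may be replaced by a $\phi$-stable $d$-holomorphic \emph{sub-bundle}, which needs either enough coherent-sheaf theory over the Klein surface or a careful descent from the double cover. The other point requiring care, rather than cleverness, is the precise normalization — the sign and the factor $2\pi\sqrt{-1}$ — in $S(at_d(L))=2\pi\sqrt{-1}\deg(L)$, which must be reconciled with the $L$-twisted Hodge star and the volume form $\Phi$ in the definition of $S$; once these are settled, both computations are routine.
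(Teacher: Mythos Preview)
Your argument is correct, and both cases land where they should. The route differs from the paper's, though. The paper works entirely in the Dolbeault model: it fixes a $d$-Hermitian metric, takes the Chern connection $D$ with curvature $R$, invokes Proposition~\ref{s4p7} to identify $at_d(E)$ with $-[R^{(1,1)}]$, and then computes $\tilde{S}(at_d(E),\phi)=\int_X\mathrm{tr}(-R\phi)$ directly. For $\phi=\mathrm{id}_E$ this gives $\int_X\mathrm{tr}(-R)=2\pi i\int_X c_1(E)$ in one line, without passing through $\det E$; for nilpotent $\phi$ the paper simply asserts that $R\circ\phi$ is nilpotent on each chart and cites \cite[Proposition~18]{MFA} and \cite[Proposition~5.10]{IBNR}. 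Your \v{C}ech approach---tracing the cocycle $\{\theta_{ij}\}$ to $at_d(\det E)$ in the first case, and building the flag $E_k$ from the saturations of $\ker(\phi^k)$ to force $\theta_{ij}$ block upper triangular against a strictly block upper triangular $\phi$ in the second---is essentially what lies \emph{behind} those citations, made explicit and transported to \v{C}ech cohomology. The payoff of your version is that it makes the nilpotent case self-contained and sidesteps the (not entirely obvious) pointwise claim that $R\phi$ is nilpotent; the cost is the detour through $\det E$ and the need to justify the subbundle filtration over a Klein surface. Your proposed justification via descent from the orienting double cover $\tilde{X}$ is sound: saturations of coherent subsheaves on a compact Riemann surface are subbundles, the construction is $\sigma$-equivariant, and your check that $\phi(E_k)\subseteq E_{k-1}$ via saturation of $\phi^{-1}(E_{k-1})$ is clean. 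The normalization worry you flag is real but purely bookkeeping, exactly as you say.
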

\begin{proof}
Let $h$ be fixed smooth $d$-Hermitian metric on $E$ and $D$ be unique $d$-smooth unitary connection 
compatible with $d$-holomorphic structure on $E$. Let $R$ be the curvature of the connection $D$, 
which is $d$-smooth $(1,1)$ form on $X$ taking values in $\mathcal{E}nd_{\dhstruct{X}}(E)$. Also 
note that $R$ represents Dolbeault cohomology class $[R]\in H^{1,1}\bigl(X,\mathcal{E}nd_{\dhstruct{X}}(E)\bigl)$. 
From \ref{s4dolbeaultiso} and Proposition \ref{s4p7} Dolbeault isomorphism between 
$H^1(X,\dhform{d}\bigl(\mathcal{E}nd_{\dhstruct{X}}(E)\bigl)$ and $H^{1,1}\bigl(X,\mathcal{E}nd_{\dhstruct{X}}(E)\bigl)$ 
carries $at_d(E)$ to $-[R]$.

Therefore, $\tilde{S}(at_d(E),\phi)=\int_X tr(-R\phi)$. Now, if $\phi=id_E$, then 
$\tilde{S}(at_d(E),\phi)=\int_X tr(-R)=2\pi i\int_X \frac{iR}{2\pi}=
2\pi i\int_X c_1(E)=2\pi i\text{ deg}(E)$.

Now, if $\phi$ is nilpotent, then it is easy to prove that $R\circ \phi$ is nilpotent on each chart 
(see \cite[Proposition 18]{MFA} or \cite[Proposition 5.10]{IBNR}), hence $tr(-R\phi)=0$ i.e.,
$\tilde{S}(at_d(E),\phi)=0$.
\end{proof}

\begin{theorem}\label{s7theorem10}
Let $E$ be an indecomposable $d$-holomorphic vector bundle over a compact Klein surface $X$. Then, 
$E$ admits $d$-holomorphic connection if and only if $\text{deg}(E)=0$.
\end{theorem}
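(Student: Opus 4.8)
The plan is to reproduce, in the $d$-holomorphic setting over a compact Klein surface, the classical Atiyah--Weil argument: a $d$-holomorphic connection exists on $E$ if and only if the Atiyah class $at_d(E) \in H^{1,1}\bigl(X, \mathcal{E}nd_{\dhstruct{X}}(E)\bigr)$ vanishes, and we detect this vanishing by testing against global endomorphisms using the non-degenerate trace pairing $\tilde{S}$ of Proposition \ref{s6prop9}. The point is that $at_d(E) = 0$ iff $\tilde{S}(at_d(E), \phi) = 0$ for every $\phi \in H^0\bigl(X, \mathcal{E}nd_{\dhstruct{X}}(E)\bigr)$, since $\tilde{S}$ is non-degenerate.

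First I would prove the easy direction. If $E$ admits a $d$-holomorphic connection, then the Atiyah exact sequence \eqref{section4equation2} splits, so $at_d(E) = 0$, hence $\tilde{S}(at_d(E), \mathrm{id}_E) = 0$; but Proposition \ref{s6prop9} computes this quantity as $2\pi\sqrt{-1}\,\deg(E)$, so $\deg(E) = 0$.

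For the converse, suppose $\deg(E) = 0$ and $E$ is indecomposable. Since $X$ is compact, $H^0\bigl(X, \mathcal{E}nd_{\dhstruct{X}}(E)\bigr)$ is finite-dimensional (Remark \ref{s6remak}), and by Proposition \ref{s6prop3} every global endomorphism $\phi$ can be written as $\phi = \lambda\,\mathrm{id}_E + \psi$ with $\lambda \in \C$ and $\psi$ nilpotent. By $\R$-bilinearity of $\tilde{S}$ in the second variable, $\tilde{S}(at_d(E), \phi) = \lambda\,\tilde{S}(at_d(E), \mathrm{id}_E) + \tilde{S}(at_d(E), \psi)$. The second term vanishes by the nilpotent case of Proposition \ref{s6prop9}, and the first term is $\lambda \cdot 2\pi\sqrt{-1}\,\deg(E) = 0$ by hypothesis. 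Hence $\tilde{S}(at_d(E), \phi) = 0$ for all $\phi$; since $\tilde{S}$ is non-degenerate, $at_d(E) = 0$, and so the Atiyah exact sequence splits $d$-holomorphically, giving a $d$-holomorphic connection on $E$ (equivalently, by Proposition \ref{prop-Jet-split}, the jet sequence \eqref{s6eq1} splits).

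The main obstacle is making sure the decomposition $\phi = \lambda\,\mathrm{id}_E + \psi$ from Proposition \ref{s6prop3} interacts correctly with $\tilde{S}$: one must check that $\lambda$ is a genuine complex scalar (so that $\tilde{S}(at_d(E), \lambda\,\mathrm{id}_E) = \lambda\,\tilde{S}(at_d(E),\mathrm{id}_E)$ makes sense as written) rather than merely a locally constant family, and that $\psi$ being nilpotent globally — which follows from indecomposability via the Fitting decomposition, as in Proposition \ref{s6prop3} — is exactly the hypothesis needed to invoke the nilpotent case of Proposition \ref{s6prop9}. This is where compactness of $X$ is essential, entering through Lemma \ref{s6l2} (the only $d$-holomorphic functions are real constants) which forces the characteristic polynomial of $\phi$ to have constant coefficients. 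The remaining identification of $\tilde{S}(at_d(E), \cdot)$ with integration of $\mathrm{tr}(-R\,\cdot)$ against the curvature of a $d$-Hermitian connection, and the generalized Hodge-star pairing underlying $\tilde{S}$, are all already in place from \S\ref{subsec-criterion} and Proposition \ref{s4p7}, so no new analytic input is required.
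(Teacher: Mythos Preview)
Your proposal is correct and follows essentially the same route as the paper: test $at_d(E)$ against all global endomorphisms via the non-degenerate pairing $\tilde{S}$, use Proposition~\ref{s6prop3} to write $\phi=\lambda\,\mathrm{id}_E+\psi$ with $\psi$ nilpotent, and invoke Proposition~\ref{s6prop9} on each summand.

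One point to tighten: you write $\lambda\in\C$ and then pull $\lambda$ outside $\tilde{S}$ citing only $\R$-bilinearity. That step is not licensed for a genuinely complex $\lambda$. The paper handles this by observing that $\lambda_\alpha=\mathrm{tr}(\phi_\alpha)/\mathrm{rk}(E)$ defines a global $d$-holomorphic function, which by Lemma~\ref{s6l2} is a \emph{real} constant; with $\lambda\in\R$, the $\R$-bilinearity of $\tilde{S}$ legitimately gives $\tilde{S}(at_d(E),\lambda\,\mathrm{id}_E)=\lambda\,\tilde{S}(at_d(E),\mathrm{id}_E)$. You allude to Lemma~\ref{s6l2} in your last paragraph but frame the issue as ``$\lambda$ is a genuine complex scalar''; what you actually need is that $\lambda$ is real, and that is exactly what Lemma~\ref{s6l2} delivers.
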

\begin{proof}
By Proposition \ref{s6prop3}, every endomorphism $\phi\in H^0\bigl(X,\mathcal{E}nd_{\dhstruct{X}}(E)\bigl)$ 
can be expressed as $\phi_{\alpha}=\lambda_{\alpha}I+\psi_{\alpha}$ on each chart 
$(U_{\alpha},z_{\alpha})$ with appropriate twisting condition, where $\psi_{\alpha}$ is nilpotent 
for each $\alpha\in A$. Hence, $tr(\phi_{\alpha})=\lambda_{\alpha}rk(E)$. 

So, we have a global $d$-holomorphic function $[\dholoa{U},\lambda_{\dholoa{U}}]$ such that 
$\lambda_{\alpha}=\frac{tr(\phi_{\alpha}{\color{red}{)}}}{rk(E)}$ satisfying the twisting condition.

From Lemma \ref{s6l2} each $\lambda_{\alpha}$ will be constant real valued function. Hence, from 
Proposition \ref{s6prop9},
$$
\tilde{S}(at_d(E),\phi)=2\pi i\frac{tr(\phi)}{rk(E)}\text{ deg}(E)
$$
for all $\phi\in H^0\bigl(X,\mathcal{E}nd_{\dhstruct{X}}(E)\bigl)$ or, $\tilde{S}(at_d(E),\phi)=0$ for 
all $\phi$ iff $\text{deg}(E)=0$ but $\tilde{S}$ is non-degenerate which implies $at_d(E)=0$ 
if and only if $\text{deg}(E)=0$.
\end{proof}

\begin{theorem}
For a given $d$-holomorphic bundle $E$, let $E=E_1\oplus E_2\oplus\dots \oplus E_k$ be Remak decomposition over a compact Klein surface. Then $E$ admits $d$-holomorphic connection if and only if $\text{deg}(E_i)=0$ for all $i\in \{1,2\dots,k\}$.
\end{theorem}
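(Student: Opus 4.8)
The plan is to reduce the statement to the indecomposable case settled in Theorem \ref{s7theorem10}. Since a Remak decomposition is, by definition, a decomposition into indecomposable $d$-holomorphic subbundles, and such a decomposition exists by the Krull--Schmidt property noted in Remark \ref{s6remak}, each $E_i$ is indecomposable. Thus it suffices to prove that $E = E_1 \oplus \cdots \oplus E_k$ admits a $d$-holomorphic connection if and only if every $E_i$ does; combining this with Theorem \ref{s7theorem10} applied to each $E_i$ immediately gives the equivalence with $\deg(E_i) = 0$ for all $i$. Note that it is genuinely the componentwise condition that is needed: $\deg(E) = \sum_i \deg(E_i) = 0$ would not suffice, since an indecomposable summand of nonzero degree already obstructs a connection.

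To carry out the reduction, let $\iota_i \colon E_i \hookrightarrow E$ and $p_i \colon E \twoheadrightarrow E_i$ denote the canonical $\dhstruct{X}$-linear inclusions and projections, so that $p_i \circ \iota_i = \id{E_i}$ and $\sum_{i} \iota_i \circ p_i = \id{E}$. The key observation is that conjugation by these maps is compatible with the Atiyah exact sequence \eqref{section4equation2}: if $P \in \mathcal{A}t_d(E)(U)$ is a derivative endomorphism with associated $d$-holomorphic vector field $Y$, i.e.\ $P(fs) = fP(s) + (Yf)s$ as in Proposition \ref{section4proposition2}, then using the $\dhstruct{X}$-linearity of $\iota_i$ and $p_i$ one checks that $p_i \circ P \circ \iota_i \in \mathcal{A}t_d(E_i)(U)$ is again a derivative endomorphism with the same vector field $Y$, and symmetrically $\sum_i \iota_i \circ P_i \circ p_i \in \mathcal{A}t_d(E)(U)$ whenever the $P_i \in \mathcal{A}t_d(E_i)(U)$ share a common symbol vector field. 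In particular these operations commute with the symbol maps, hence send splittings to splittings.

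Granting this, if $\nabla \colon T_dX \to \mathcal{A}t_d(E)$ splits \eqref{section4equation2}, then $\nabla^i_Y := p_i \circ \nabla_Y \circ \iota_i$ defines an $\dhstruct{X}$-module morphism $T_dX \to \mathcal{A}t_d(E_i)$ whose composite with the symbol map is $Y \mapsto Y \otimes \id{E_i}$, hence a $d$-holomorphic connection on $E_i$ in the sense of Definition \ref{section5definition1}; conversely, given $d$-holomorphic connections $\nabla^i$ on the $E_i$, the assignment $\nabla_Y := \sum_i \iota_i \circ \nabla^i_Y \circ p_i$ splits \eqref{section4equation2} for $E$, as one verifies directly on a section $s = \sum_i \iota_i(s_i)$ using the Leibniz identity for each $\nabla^i$. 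This completes the reduction, and hence the proof. The only step requiring care is the observation in the second paragraph --- namely that conjugating a derivative endomorphism of $E$ by the $d$-holomorphic maps $\iota_i, p_i$ preserves both the first-order differential $d$-operator condition and the property that the symbol is a section of $T_dX$ rather than of all of $T_dX \otimes_{\dhstruct{X}} \mathcal{E}nd_{\dhstruct{X}}(E_i)$; this is exactly where one uses that $\iota_i$ and $p_i$ are $\dhstruct{X}$-linear, and it is otherwise a routine computation from Proposition \ref{section4proposition2}.
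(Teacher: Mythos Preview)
Your proof is correct. The overall strategy coincides with the paper's: both reduce to the indecomposable case (Theorem \ref{s7theorem10}) by establishing that $E=E_1\oplus\cdots\oplus E_k$ admits a $d$-holomorphic connection if and only if each $E_i$ does. The difference lies in how that reduction is carried out. The paper invokes the additivity of the first Jet bundle functor, so that the short exact sequence \eqref{s6eq1} for $E$ is the direct sum of the corresponding sequences for the $E_i$; it then appeals to Proposition \ref{prop-Jet-split} to translate splitting of \eqref{s6eq1} into existence of a connection. You instead work directly with the Atiyah exact sequence \eqref{section4equation2} and build the componentwise splittings (and their inverse assembly) by conjugating with the $\dhstruct{X}$-linear inclusions and projections, checking via Proposition \ref{section4proposition2} that the Leibniz identity and the scalar symbol are preserved. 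Your route is more self-contained---it avoids the detour through Jet bundles and the external reference to \cite[Proposition 5.4]{IBNR}---while the paper's is more conceptual, packaging the reduction as functorial additivity.
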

\begin{proof}
Since the Jet bundle functor is additive, we have an analogue of \cite[Proposition 5.4]{IBNR} for $d$-holomorphic bundle
$E$. Hence, using Proposition \ref{prop-Jet-split} and Theorem \ref{s7theorem10}, the result follows.
\end{proof}


\end{document}